\theoremstyle{theorem}
\newtheorem{theorem}{Theorem}[section]
\newtheorem{hypothesis}[theorem]{Hypothesis}
\newtheorem{lemma}[theorem]{Lemma}
\newtheorem{definition}[theorem]{Definition}
\newtheorem{corollary}[theorem]{Corollary}
\newtheorem{proposition}[theorem]{Proposition}
\theoremstyle{remark}
\newtheorem{remark}{Remark}
\newtheorem{example}{Example}
\newcommand*{\myproofname}{Proof}
\renewcommand\Re{\operatorname{Re}}
\renewcommand\Im{\operatorname{Im}}
\newcommand\Sym{\operatorname{Sym}}
\newcommand\Exp{\operatorname{Exp}}
\newcommand\tr{\operatorname{tr}}
\newcommand\diag{\operatorname{diag}}
\renewcommand\det{\operatorname{det}}
\newcommand{\Ker}{\operatorname{Ker}}
\newcommand{\End}{\mbox{End}}
\newcommand{\Gl}{\mbox{Gl}}
\author{Evan Randles and Laurent Saloff-Coste}
\title{Positive-homogeneous operators, heat kernel estimates and the Legendre-Fenchel transform}
\date{}
\begin{document}
\maketitle
\begin{center}
\textit{Dedicated to Professor Rodrigo Ba\~{n}uelos on the occasion of his 60th birthday.}
\end{center}
\begin{abstract}
We consider a class of homogeneous partial differential operators on a finite-dimensional vector space and study their associated heat kernels. The heat kernels for this general class of operators are seen to arise naturally as the limiting objects of the convolution powers of complex-valued functions on the square lattice in the way that the classical heat kernel arises in the (local) central limit theorem. These so-called positive-homogeneous operators generalize the class of semi-elliptic operators in the sense that the definition is coordinate-free. More generally, we introduce a class of variable-coefficient operators, each of which is uniformly comparable to a positive-homogeneous operator, and we study the corresponding Cauchy problem for the heat equation. Under the assumption that such an operator has H\"{o}lder continuous coefficients, we construct a fundamental solution to its heat equation by the method of E. E. Levi, adapted to parabolic systems by A. Friedman and S. D. Eidelman. Though our results in this direction are implied by the long-known results of S. D. Eidelman for $2\vec{b}$-parabolic systems, our focus is to highlight the role played by the Legendre-Fenchel transform in heat kernel estimates. Specifically, we show that the fundamental solution satisfies an off-diagonal estimate, i.e., a heat kernel estimate, written in terms of the Legendre-Fenchel transform of the operator's principal symbol--an estimate which is seen to be sharp in many cases.
\end{abstract}
\noindent{\small\bf Keywords:} Semi-elliptic operators, quasi-elliptic operators, $2\vec{b}$-parabolic operators, heat kernel estimates, Legendre-Fenchel transform.\\

\noindent{\small\bf Mathematics Subject Classification:} Primary 35H30; Secondary  35K25.
\section{Introduction}

In this article, we consider a class of homogeneous partial differential operators on a finite dimensional vector space and study their associated heat kernels. These operators, which we call nondegenerate-homogeneous operators, are seen to generalize the well-studied classes of semi-elliptic operators introduced by F. Browder \cite{Browder1957}, also known as quasi-elliptic operators \cite{Volevich1962}, and a special ``positive" subclass of  semi-elliptic operators which appear as the spatial part of S. D. Eidelman's $2\vec{b}$-parabolic operators \cite{Eidelman1960}.  In particular, this class of operators contains all integer powers of the Laplacian.

\subsection{Semi-Elliptic Operators}

\noindent To motivate the definition of nondegenerate-homogeneous operators, given in the next section, we first introduce the class of semi-elliptic operators. Semi-elliptic operators are seen to be prototypical examples of nondegenerate-homogeneous operators; in fact, the definition of nondegenerate-homogeneous operators is given to formulate the following construction in a basis-independent way. Given $d$-tuple of positive integers $\mathbf{n}=(n_1,n_2,\dots,n_d)\in\mathbb{N}_+^d$ and a multi-index $\beta=(\beta_1,\beta_2,\dots,\beta_d)\in\mathbb{N}^d$, set $|\beta:\mathbf{n}|=\sum_{k=1}^d\beta_k/n_k$. Consider the constant coefficient partial differential operator
\begin{equation*}
\Lambda=\sum_{|\beta:\mathbf{n}|\leq1}a_{\beta}D^{\beta}
\end{equation*}
with principal part (relative to $\mathbf{n}$)
\begin{equation*}
\Lambda_p=\sum_{|\beta:\mathbf{n}|=1}a_{\beta}D^{\beta},
\end{equation*}
where $a_{\beta}\in\mathbb{C}$ and $D^{\beta}=(i\partial_{x_1})^{\beta_1}(i\partial_{x_2})^{\beta_2}\cdots(i\partial_{x_d})^{\beta_d}$ for each multi-index $\beta\in\mathbb{N}^d$.  Such an operator $\Lambda$ is said to be \textit{semi-elliptic} if the symbol of $\Lambda_p$, defined by $P_p(\xi)=\sum_{|\beta:\mathbf{n}|=1}a_{\beta}\xi^{\beta}$ for $\xi\in\mathbb{R}^d$, is non-vanishing away from the origin. If $\Lambda$ satisfies the stronger condition that $\Re P_p(\xi)$ is strictly positive away from the origin, we say that it is \textit{positive-semi-elliptic}. What seems to be the most important property of semi-elliptic operators is that their principal part $\Lambda_p$ is homogeneous in the following sense: If given any smooth function $f$ we put $\delta_t(f)(x)=f(t^{1/n_1}x_1,t^{1/n_2}x_2,\dots,t^{1/n_d}x_d)$ for  all $t>0$ and $x=(x_1,x_2,\dots,x_d)\in\mathbb{R}^d$, then
\begin{equation*}
t\Lambda=\delta_{1/t}\circ \Lambda_p\circ \delta_t
\end{equation*}
for all $t>0$. This homogeneous structure was used explicitly in the work of F. Browder and L. H\"{o}rmander and, in this article, we generalize this notion. We note that our definition for the differential operators $D^{\beta}$ is given to ensure a straightforward relationship between operators and symbols under our convention for the Fourier transform (defined in Subsection \ref{subsec:Preliminaries}); this definition differs only slightly from the standard references \cite{Hormander1963,Hormander1983,Rudin1991, Taylor1981} in which $i$ is replaced by $1/i$. In both conventions, the symbol of the operator $\Lambda=-\Delta=-\sum_{k=1}^d\partial^2_{x_k}$ is the positive polynomial $\xi\mapsto|\xi|^2=\sum_{k=1}^d\xi_k^2$. In fact, the principal symbols of all positive-semi-elliptic operators agree in both conventions.\\

\noindent As mentioned above, the class of semi-elliptic operators was introduced by F. Browder in \cite{Browder1957} who studied spectral asymptotics for a related class of variable-coefficient operators (operators of constant strength). Semi-elliptic operators appeared later in L. H\"{o}rmander's text \cite{Hormander1963} as model examples of hypoelliptic operators on $\mathbb{R}^d$ beyond the class of elliptic operators. Around the same time, L. R. Volevich \cite{Volevich1962} independently introduced the same class of operators but instead called them ``quasi-elliptic''. Since then, the theory of semi-elliptic operators, and hence quasi-elliptic operators, has reached a high level of sophistication and we refer the reader to the articles \cite{Apel2014, Artino1973, Artino1993, Artino1977, Artino1995, Browder1957, Hile2006, Hile2001, Hormander1963, Hormander1983, Kannai1969, Triebel1983, Tsutsumi1975}, which use the term semi-elliptic, and the articles \cite{Bondar2012, Bondar2012a, Bondar2008, Cavallucci1965, Demidenko1989, Demidenko1993, Demidenko1994, Demidenko1998, Demidenko2001, Demidenko2007, Demidenko2008, Demidenko2009, Giusti1967, Matsuzawa1968, Pini1962, Troisi1971, Volevich1960, Volevich1962}, which use the term quasi-elliptic, for an account of this theory. We would also like to point to the 1971 paper of M. Troisi \cite{Troisi1971} which gives a more complete list of references (pertaining to quasi-elliptic operators).\\

\noindent Shortly after F. Browder's paper \cite{Browder1957} appeared, S. D. Eidelman considered a subclass of semi-elliptic operators on $\mathbb{R}^{d+1}=\mathbb{R}\oplus\mathbb{R}^d$ (and systems thereof) of the form
\begin{equation}\label{eq:2b-Parabolic}
\partial_t+\sum_{|\beta:2\mathbf{m}|\leq 1}a_{\beta}D^{\beta}=\partial_t+\sum_{|\beta:\mathbf{m}|\leq 2}a_{\beta}D^{\beta},
\end{equation}
where $\mathbf{m}\in\mathbb{N}_+^d$ and the coefficients $a_{\beta}$ are functions of $x$ and $t$. Such an operator is said to be \textit{$2\mathbf{m}$-parabolic} if its spatial part, $\sum_{|\beta:2\mathbf{m}|\leq 1}a_{\beta}D^{\beta}$, is (uniformly) positive-semi-elliptic. We note however that Eidelman's work and the existing literature refer exclusively to $2\vec{b}$-parabolic operators, i.e., where $\mathbf{m}=\vec{b}$, and for consistency we write $2\vec{b}$-parabolic henceforth  \cite{Eidelman1960,Eidelman2004}. The relationship between positive-semi-elliptic operators and $2\vec{b}$-parabolic operators is analogous to the relationship between the Laplacian and the heat operator and, in the context of this article, the relationship between nondegenerate-homogeneous and positive-homogeneous operators described by Proposition \ref{prop:Dichotomy}.  The theory of $2\vec{b}$-parabolic operators, which generalizes the theory of parabolic partial differential equations (and systems), has seen significant advancement by a number of mathematicians since Eidelman's original work. We encourage the reader to see the recent text \cite{Eidelman2004} which provides an account of this theory and an exhaustive list of references. It should be noted however that the literature encompassing semi-elliptic operators and quasi-elliptic operators, as far as we can tell, has very few cross-references to the literature on $2\vec{b}$-parabolic operators beyond the 1960s. We suspect that the absence of cross-references is due to the distinctness of vocabulary. \\

\subsection{Motivation: Convolution powers of complex-valued functions on $\mathbb{Z}^d$}

We motivate the study of homogeneous operators by first demonstrating the natural appearance of their heat kernels in the study of convolution powers of complex-valued functions. To this end, consider a finitely supported function $\phi:\mathbb{Z}^d\rightarrow\mathbb{C}$ and define its convolution powers iteratively by 
\begin{equation*}
\phi^{(n)}(x)=\sum_{y\in\mathbb{Z}^d}\phi^{(n-1)}(x-y)\phi(y)
\end{equation*}
for $x\in\mathbb{Z}^d$ where $\phi^{(1)}=\phi$. In the special case that $\phi$ is a probability distribution, i.e., $\phi$ is non-negative and has unit mass, $\phi$ drives a random walk on $\mathbb{Z}^d$ whose nth-step transition kernels are given by $k_n(x,y)=\phi^{(n)}(y-x)$. Under certain mild conditions on the random walk, $\phi^{(n)}$ is well-approximated by a single Gaussian density; this is the classical local limit theorem. Specifically, for a symmetric, aperiodic and irreducible random walk, the theorem states that
\begin{equation}\label{eq:LLT}
\phi^{(n)}(x)=n^{-d/2}G_\phi(x/\sqrt{n})+o(n^{-d/2})
\end{equation}
uniformly for $x\in\mathbb{Z}^d$, where $G_\phi$ is the generalized Gaussian density
\begin{equation}\label{eq:Gaussian}
G_\phi(x)=\frac{1}{(2\pi)^d}\int_{\mathbb{R}^d}\exp\big(-\xi\cdot C_\phi\xi\big)e^{-ix\cdot\xi}\,d\xi=\frac{1}{(2\pi )^{d/2}\sqrt{\det C_\phi}}\exp\left(-\frac{x\cdot {C_\phi}^{-1}x}{2}\right);
\end{equation}
here, $C_\phi$ is the positive definite covariance matrix associated to $\phi$ and $\cdot$ denotes the dot product \cite{Spitzer1964,Lawler2010, Randles2015a}. The canonical example is that in which $C_{\phi}=I$ (e.g. Simple Random Walk) and in this case $\phi^{(n)}$ is approximated by the so-called heat kernel $K_{(-\Delta)}:(0,\infty)\times\mathbb{R}^d\rightarrow (0,\infty)$ defined by
\begin{equation*}
K_{(-\Delta)}^t(x)=(2\pi t)^{-d/2}\exp\left(-\frac{|x|^2}{2t}\right)
\end{equation*}
for $t>0$ and $x\in\mathbb{R}^d$. Indeed, we observe that $n^{-d/2}G_{\phi}(x/\sqrt{n})=K_{(-\Lambda)}^n(x)$ for each positive integer $n$ and $x\in\mathbb{Z}^d$ and so the local limit theorem \eqref{eq:LLT} is written equivalently as
\begin{equation*}
\phi^{(n)}(x)=K_{(-\Delta)}^n(x)+o(n^{-d/2})
\end{equation*}
uniformly for $x\in\mathbb{Z}^d$. In addition to its natural appearance as the \textit{attractor} in the local limit theorem above, $K^t_{(-\Delta)}(x)$ is a fundamental solution to the heat equation
\begin{equation*}
\partial_t+(-\Delta)=0
\end{equation*}
on $(0,\infty)\times\mathbb{R}^d$. In fact, this connection to random walk underlies the heat equation's probabilistic/diffusive interpretation. Beyond the probabilistic setting, this link between convolution powers and fundamental solutions to partial differential equations persists as can be seen in the examples below. In what follows, the heat kernels $(t,x)\mapsto K_{\Lambda}^t(x)$ are fundamental solutions to the corresponding heat-type equations of the form
\begin{equation*}
\partial_t+\Lambda=0.
\end{equation*}
The appearance of $K_{\Lambda}$ in local limit theorems (for $\phi^{(n)}$) is then found by evaluating $K_{\Lambda}^t(x)$ at integer time $t=n$ and lattice point $x\in\mathbb{Z}^d$.

\begin{example}\label{ex:intro1} Consider $\phi:\mathbb{Z}^2\rightarrow\mathbb{C}$ defined by
\begin{equation*}
\phi(x_1,x_2)=\frac{1}{22+2\sqrt{3}}\times\begin{cases}
	    8 & (x_1,x_2)=(0,0)\\
           5+\sqrt{3} & (x_1,x_2)=(\pm 1,0)\\
           -2 & (x_1,x_2)=(\pm 2,0)\\
            i(\sqrt{3}-1)& (x_1,x_2)=(\pm 1,-1)\\
            -i(\sqrt{3}-1)& (x_1,x_2)=(\pm 1,1)\\
            2\mp 2i & (x_1,x_2)=(0,\pm 1)\\
           0 & \mbox{otherwise}.
          \end{cases}
\end{equation*}
\begin{figure}[h!]
\begin{center}
\resizebox{\textwidth}{!}{
\begin{subfigure}[6cm]{0.5\textwidth}
		\includegraphics[width=\textwidth]{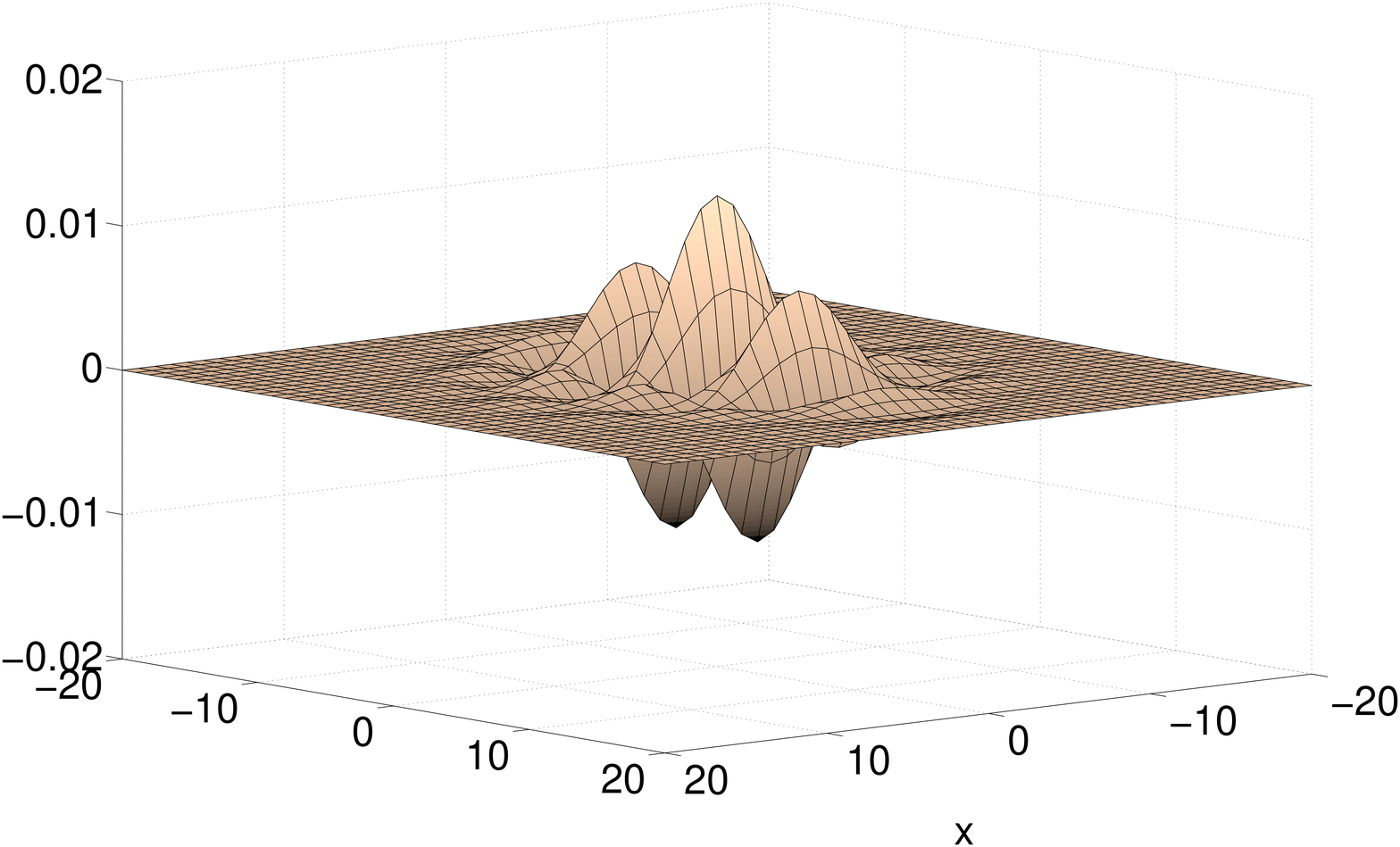}
		\caption{$\Re(\phi^{(n)})$ for $n=100$}
		\label{fig:ex_intro_100_1}
	    \end{subfigure}
	    \begin{subfigure}[5cm]{0.5\textwidth}
		\includegraphics[width=\textwidth]{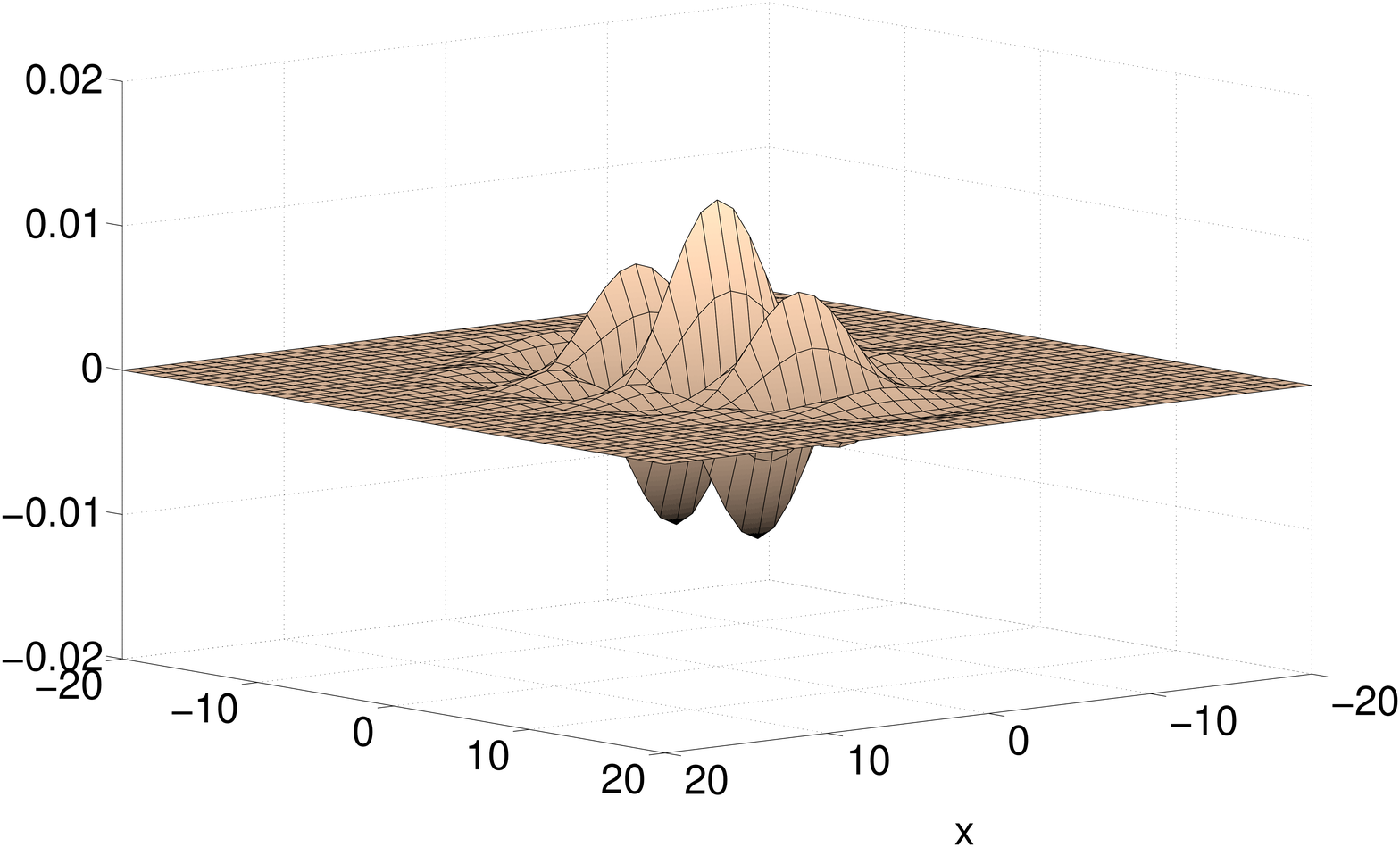}
		\caption{$\Re(e^{-i\pi x_2/3}K_{\Lambda}^n)$ for $n=100$}
		\label{fig:ex_intro_100_attractor_1}
	    \end{subfigure}}
\caption{The graphs of $\Re(\phi^{(n)})$ and $\Re(e^{-i\pi x_2/3}K_{\Lambda}^n)$ for $n=100$.}
\label{fig:ex1_intro}
\end{center}
\end{figure}

\noindent Analogous to the probabilistic setting, the large $n$ behavior of $\phi^{(n)}$ is described by a generalized local limit theorem in which the attractor is a fundamental solution to a heat-type equation. Specifically, the following local limit theorem holds (see \cite{Randles2015a} for details): 
\begin{equation*}
 \phi^{(n)}(x_1,x_2)=e^{-i\pi x_2/3}K_{\Lambda}^n(x_1,x_2)+o(n^{-3/4})
\end{equation*}
uniformly for $(x_1,x_2)\in\mathbb{Z}^2$ where $(t,x)\mapsto K_{\Lambda}^t(x)$ is the ``heat'' kernel for the heat-type equation $\partial_t+\Lambda=0$ where
\begin{equation*}
\Lambda=\frac{1}{22+2\sqrt{3}}\left(2\partial_{x_1}^4-i(\sqrt{3}-1)\partial_{x_1}^2\partial_{x_2}-4\partial_{x_2}^2\right).
\end{equation*}
This local limit theorem is illustrated in Figure \ref{fig:ex1_intro} which shows $\Re(\phi^{(n)})$ and the approximation $\Re(e^{-i\pi x_2/3}K_{\Lambda}^n)$ when $n=100$. 
\end{example}

\begin{example}\label{ex:intro2} Consider $\phi:\mathbb{Z}^2\rightarrow \mathbb{R}$ defined by $\phi=(\phi_1+\phi_2)/512$, where
\begin{equation*}
\phi_1(x_1,x_2)=
\begin{cases}
326 & (x_1,x_2)=(0,0)\\
20 & (x_1,x_2)=(\pm 2,0)\\
1 & (x_1,x_2)=(\pm 4,0)\\
64 & (x_1,x_2)=(0,\pm 1)\\
-16 & (x_1,x_2)=(0,\pm 2)\\
0 & \mbox{otherwise}
\end{cases}
\hspace{.5cm}\mbox{ and }\hspace{.5cm}
\phi_2(x_1,x_2)=
\begin{cases}
76 & (x_1,x_2)=(1,0)\\
52 & (x_1,x_2)=(-1,0)\\
\mp 4 & (x_1,x_2)=(\pm 3,0)\\
\mp 6 & (x_1,x_2)=(\pm 1,1)\\
\mp 6 & (x_1,x_2)=(\pm 1,-1)\\
\pm 2 & (x_1,x_2)=(\pm 3,1)\\
\pm 2 & (x_1,x_2)=(\pm 3,-1)\\
0 & \mbox{otherwise}.
\end{cases}
\end{equation*}
\begin{figure}[h!]
\begin{center}
\resizebox{\textwidth}{!}{
	    \begin{subfigure}[5cm]{0.5\textwidth}
		\includegraphics[width=\textwidth]{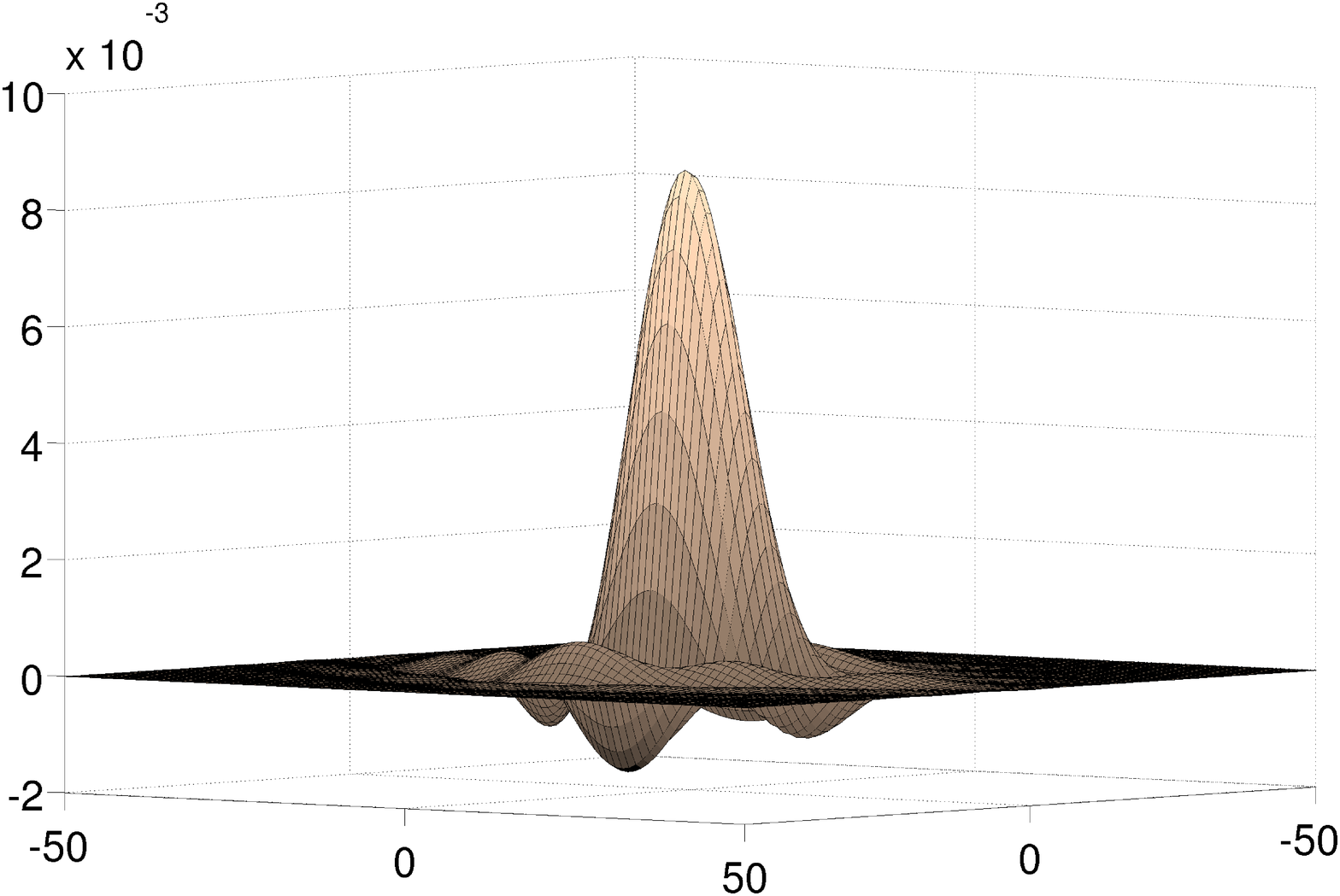}
		\caption{$\phi^{(n)}$ for $n=10,000$}
		\label{fig:ex5phi100_1}
	    \end{subfigure}
	    \begin{subfigure}[5cm]{0.5\textwidth}
		\includegraphics[width=\textwidth]{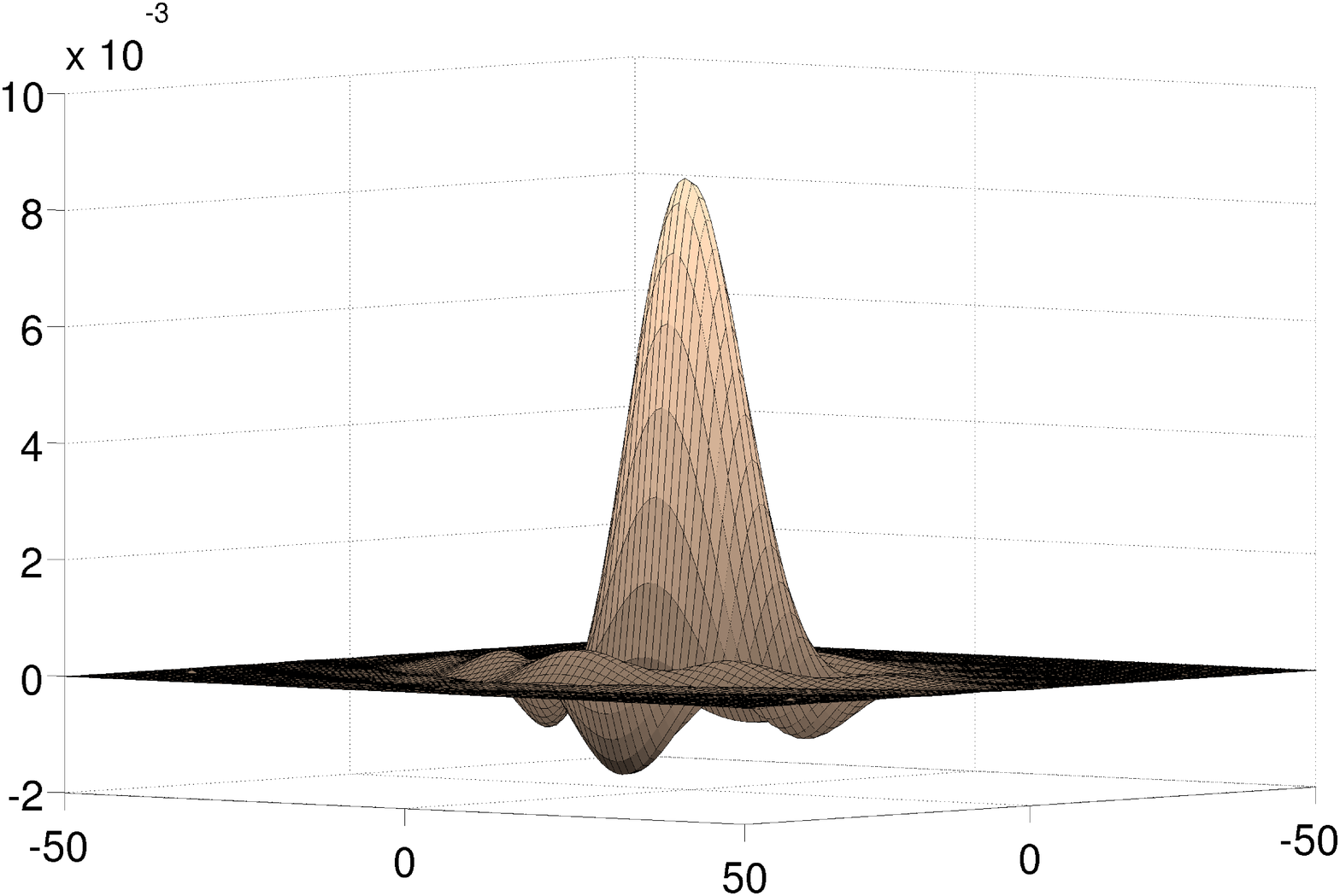}
		\caption{$K_{\Lambda}^n$ for $n=10,000$}
		\label{fig:ex5HP100_1}
	    \end{subfigure}}
\caption{The graphs of $\phi^{(n)}$ and $K_{\Lambda}^n$ for $n=10,000$.}
\label{fig:ex2_intro}
\end{center}
\end{figure}

\noindent In this example, the following local limit theorem, which is illustrated by Figure \ref{fig:ex2_intro}, describes the limiting behavior of $\phi^{(n)}$. We have
\begin{equation*}
\phi^{(n)}(x_1,x_2)=K^n_{\Lambda}(x_1,x_2)+o(n^{-5/12})
\end{equation*}
uniformly for $(x_1,x_2)\in\mathbb{Z}^2$ where $K_{\Lambda}$ is again a fundamental solution to $\partial_t+\Lambda=0$ where, in this case, 
\begin{equation*}
\Lambda=\frac{1}{64}\left(-\partial_{x_1}^6+2\partial_{x_2}^4+2\partial_{x_1}^3\partial_{x_2}^2\right).
\end{equation*}
\end{example}

\begin{example}\label{ex:intro3} Consider $\phi:\mathbb{Z}^2\rightarrow\mathbb{R}$ defined by 
\begin{equation*}
\phi(x,y)=\begin{cases}
	    3/8 & (x_1,x_2)=(0,0)\\
           1/8 & (x_1,x_2)=\pm(1,1)\\
           1/4 & (x_1,x_2)=\pm(1,-1)\\
            -1/16& (x_1,x_2)=\pm( 2,-2)\\
           0 & \mbox{otherwise}.
          \end{cases}
\end{equation*}
Here, the following local limit theorem is valid: 
\begin{equation*}\nonumber\label{eq:ex_3}
\phi^{(n)}(x_1,x_2)=\left(1+e^{i\pi(x_1+x_2)}\right)K_{\Lambda}^n(x_1,x_2)+o(n^{-3/4})\\
\end{equation*}
uniformly for $(x_1,x_2)\in\mathbb{Z}^2$. Here again, the attractor $K_{\Lambda}$ is the fundamental solution to $\partial_t+\Lambda=0$ where
\begin{equation*}
\Lambda=-\frac{1}{8}\partial_{x_1}^2+\frac{23}{384}\partial_{x_1}^4-\frac{1}{4}\partial_{x_1}\partial_{x_2}-\frac{25}{96}\partial_{x_1}^3\partial_{x_2}-\frac{1}{8}\partial_{x_2}^2+\frac{23}{64}\partial_{x_1}^2\partial_{x_2}^2-\frac{25}{96}\partial_{x_1}\partial_{x_2}^3+\frac{23}{384}\partial_{x_2}^4.
\end{equation*}
\end{example}

\noindent Looking back at preceding examples, we note that the operators appearing in Examples \ref{ex:intro1} and \ref{ex:intro2} are both positive-semi-elliptic and consist only of their principal parts. This is easily verified, for $\mathbf{n}=(4,2)=2(2,1)$ in Example \ref{ex:intro1} and $\mathbf{n}=(6,4)=2(3,2)$  in Example \ref{ex:intro2}. In contrast to Examples \ref{ex:intro1} and \ref{ex:intro2}, the operator $\Lambda$ which appears in Example \ref{ex:intro3} is not semi-elliptic in the given coordinate system. After careful study, the $\Lambda$ appearing in Example \ref{ex:intro3} can be written equivalently as
\begin{equation}\label{eq:DiagonalizedExample} 
\Lambda=-\frac{1}{8}\partial_{v_1}^2+\frac{23}{384}\partial_{v_2}^4
\end{equation}
where $\partial_{v_1}$ is the directional derivative in the $v_1=(1,1)$ direction and $\partial_{v_2}$ is the directional derivative in the $v_2=(1,-1)$ direction. In this way, $\Lambda$ is seen to be semi-elliptic with respect to some basis $\{v_1,v_2\}$ of $\mathbb{R}^2$ and, with respect to this basis, we have $\mathbf{n}=(2,4)=2(1,2)$.  For this reason, our formulation of nondegenerate-homogeneous operators (and positive-homogeneous operators), given in the next section, is made in a basis-independent way.\\

\noindent All of the operators appearing in Examples \ref{ex:intro1}, \ref{ex:intro2} and \ref{ex:intro3} share two important properties: homogeneity and positivity (in the sense of symbols). While we make these notions precise in the next section, loosely speaking, homogeneity is the property that $\Lambda$ ``plays well'' with some dilation structure on $\mathbb{R}^d$, though this structure is different in each example. Further, homogeneity for $\Lambda$ is reflected by an analogous one for the corresponding heat kernel $K_{\Lambda}$; in fact, the specific dilation structure is, in some sense, selected by $\phi^{(n)}$ as $n\rightarrow\infty$ and leads to the corresponding local limit theorem. In further discussion of these examples, a very natural question arises: Given $\phi:\mathbb{Z}^d\rightarrow\mathbb{C}$, how does one compute the operator $\Lambda$ whose heat kernel $K_{\Lambda}$ appears as the attractor in the local limit theorem for $\phi^{(n)}$? In the examples we have looked at, one studies the Taylor expansion of the Fourier transform $\hat{\phi}$ of $\phi$ near its local extrema and, here, the symbol of the relevant operator $\Lambda$ appears as certain scaled limit of this Taylor expansion. In general, however, this is a very delicate business and, at present, there is no known algorithm to determine these operators. In fact, it is possible that multiple (distinct) operators can appear by looking at the Taylor expansions about distinct local extrema of $\hat{\phi}$ (when they exist) and, in such cases, the corresponding local limit theorems involve sums of of heat kernels--each corresponding to a distinct $\Lambda$. This study is carried out in the article \cite{Randles2015a} wherein local limit theorems involve the heat kernels of the positive-homoegeneous operators studied in the present article. We note that the theory presented in \cite{Randles2015a} is not complete, for there are cases in which the associated Taylor approximations yield symbols corresponding to operators $\Lambda$ which fail to be positive-homogeneous (and hence fail to be positive-semi-elliptic) and further, the heat kernels of these (degenerate) operators appear as limits of oscillatory integrals which correspond to the presence of ``odd" terms in $\Lambda$, e.g., the Airy function. In one dimension, a complete theory of local limit theorems is known for the class of finitely supported functions $\phi:\mathbb{Z}\rightarrow\mathbb{C}$. Beyond one dimension, a theory for local limit theorems of complex-valued functions, in which the results of \cite{Randles2015a} will fit, remains open.\\


\noindent The subject of this paper is an account of positive-homogeneous operators and their corresponding heat equations. In Section \ref{sec:HomogeneousOperators}, we introduce positive-homogeneous operators and study their basic properties; therein, we show that each positive-homogeneous operator is semi-elliptic in some coordinate system. Section \ref{sec:Holder} develops the necessary background to introduce the class of variable-coefficient operators studied in this article; this is the class of $(2\mathbf{m,v})$-positive-semi-elliptic operators introduced in Section \ref{sec:UniformlySemiElliptic}--each of which is comparable to a constant-coefficient positive-homogeneous operator. In Section \ref{sec:FundamentalSolution}, we study the heat equations corresponding to uniformly $(2\mathbf{m,v})$-positive-semi-elliptic operators with H\"{o}lder continuous coefficients. Specifically, we use the famous method of E. E. Levi, adapted to parabolic systems by A. Friedman and S. D. Eidelman, to construct a fundamental solution to the corresponding heat equation. Our results in this direction are captured by those of S. D. Eidelman \cite{Eidelman1960} and the works of his collaborators, notably S. D. Ivashyshen and A. N. Kochubei \cite{Eidelman2004}, concerning $2\vec{b}$-parabolic systems. Our focus in this presentation is to highlight the essential role played by the Legendre-Fenchel transform in heat kernel estimates which, to our knowledge, has not been pointed out in the context of semi-elliptic operators. In a forthcoming work, we study an analogous class of operators, written in divergence form, with measurable-coefficients and their corresponding heat kernels. This class of measurable-coefficient operators does not appear to have been previously studied. The results presented here, using the Legendre-Fenchel transform, provides the background and context for our work there. 

\subsection{Preliminaries}\label{subsec:Preliminaries}

\textbf{Fourier Analysis:} Our setting is a real $d$-dimensional vector space $\mathbb{V}$ equipped with Haar (Lebesgue) measure $dx$ and the standard smooth structure; we do not affix $\mathbb{V}$ with a norm or basis. The dual space of $\mathbb{V}$ is denoted by $\mathbb{V}^*$ and the dual pairing is denoted by $\xi(x)$ for $x\in\mathbb{V}$ and $\xi\in\mathbb{V}^*$. Let $d\xi$ be the Haar measure on $\mathbb{V}^*$ which we take to be normalized so that our convention for the Fourier transform and inverse Fourier transform, given below, makes each unitary. Throughout this article, all functions on $\mathbb{V}$ and $\mathbb{V}^*$ are understood to be complex-valued. The usual Lebesgue spaces are denoted by $L^p(\mathbb{V})=L^p(\mathbb{V},dx)$ and equipped with their usual norms $\|\cdot\|_p$ for $1\leq p\leq \infty$. In the case that $p=2$, the corresponding inner product on $L^2(\mathbb{V})$ is denoted by $\langle\cdot,\cdot\rangle$. Of course, we will also work with $L^2(\mathbb{V}^*)
:=L^2(\mathbb{V}^*,d\xi)$; here the $L^2$-norm and inner product will be denoted by $\|\cdot\|_{2^*}$ and $\langle\cdot,\cdot\rangle_*$ respectively. The Fourier transform $\mathcal{F}:L^2(\mathbb{V})\rightarrow L^2(\mathbb{V}^*)$  and inverse Fourier transform $\mathcal{F}^{-1}:L^2(\mathbb{V}^*)\rightarrow L^2(\mathbb{V})$ are initially defined for Schwartz functions $f\in \mathcal{S}(\mathbb{V})$ and $g\in\mathcal{S}(\mathbb{V}^*)$ by
\begin{equation*}
\mathcal{F}(f)(\xi)=\hat{f}(\xi)=\int_{\mathbb{V}}e^{i\xi(x)}f(x)\,dx\hspace{1cm}\mbox{and}\hspace{1cm}\mathcal{F}^{-1}(g)(x)=\check{g}(x)=\int_{\mathbb{V}^*}e^{-i\xi(x)}g(\xi)\,d\xi
\end{equation*}
for $\xi\in\mathbb{V}^*$  and $x\in\mathbb{V}$ respectively. 

For the remainder of this article (mainly when duality isn't of interest), $W$ stands for any real $d$-dimensional vector space (and so is interchangeable with $\mathbb{V}$ or $\mathbb{V}^*$). For a non-empty open set $\Omega\subseteq W$, we denote by $C(\Omega)$ and $C_b(\Omega)$ the set of continuous functions on $\Omega$ and bounded continuous functions on $\Omega$, respectively. The set of smooth functions on $\Omega$ is denoted by $C^{\infty}(\Omega)$ and the set of compactly supported smooth functions on $\Omega$ is denoted by $C^{\infty}_0(\Omega)$. We denote by $\mathcal{D}'(\Omega)$ the space of distributions on $\Omega$; this is dual to the space $C^{\infty}_0(\Omega)$ equipped with its usual topology given by seminorms.  A partial differential operator $H$ on $W$ is said to be \textit{hypoelliptic} if it satisfies the following property: Given any open set $\Omega\subseteq W$ and any distribution $u\in \mathcal{D}'(\Omega)$ which satisfies $Hu=0$ in $\Omega$, then necessarily $u\in C^{\infty}(\Omega)$.\\

\noindent \textbf{Dilation Structure:} Denote by $\End(W)$ and $\Gl(W)$ the set of endomorphisms and isomorphisms of $W$ respectively. Given $E\in\End(W)$, we consider the one-parameter group $\{t^E\}_{t>0}\subseteq \Gl(W)$ defined by 
\begin{equation*}
t^E=\exp((\log t)E)=\sum_{k=0}^{\infty}\frac{(\log t)^k}{k!}E^k
\end{equation*}
for $t>0$.  These one-parameter subgroups of $\Gl(W)$ allow us to define continuous one-parameter groups of operators on the space of distributions as follows: Given $E\in\End(W)$ and $t>0$, first define $\delta_t^E(f)$ for $f\in C_0^{\infty}(W)$ by $\delta_t^E(f)(x)=f(t^Ex)$ for $x\in W$. Extending this to the space of distribution on $W$ in the usual way, the collection $\{\delta_t^E\}_{t>0}$ is a continuous one-parameter group of operators on $\mathcal{D}'(W)$; it will allow us to define homogeneity for partial differential operators in the next section.\\

\noindent \textbf{Linear Algebra, Polynomials And The Rest:} Given a basis $\mathbf{w}=\{w_1,w_2,\dots,w_d\}$ of $W$, we define the map $\phi_{\mathbf{w}}:W\rightarrow\mathbb{R}^d$ by setting $\phi_{\mathbf{w}}(w)=(x_1,x_2,\dots,x_d)$ whenever $w=\sum_{l=1}^d x_l w_l$. This map defines a global coordinate system on $W$; any such coordinate system is said to be a linear coordinate system on $W$. By definition, a polynomial on $W$ is a function $P:W\rightarrow\mathbb{C}$ that is a polynomial function in every (and hence any) linear coordinate system on $W$. A polynomial $P$ on $W$ is called a nondegenerate polynomial if $P(w)\neq 0$ for all $w\neq 0$. Further, $P$ is called a positive-definite polynomial if its real part, $R=\Re P$, is non-negative and has $R(w)=0$ only when $w=0$. The symbols $\mathbb{R,C,Z}$ mean what they usually do, $\mathbb{N}$ denotes the set of non-negative integers and $\mathbb{I}=[0,1]\subseteq\mathbb{R}$. The symbols $\mathbb{R}_+$, $\mathbb{N}_+$ and $\mathbb{I}_+$ denote the set of strictly positive elements of $\mathbb{R}$, $\mathbb{N}$ and $\mathbb{I}$ respectively. Likewise, $\mathbb{R}_+^d$, $\mathbb{N}_+^d$ and $\mathbb{I}_+^d$ respectively denote the set of $d$-tuples of these aforementioned sets. Given $\alpha=(\alpha_1,\alpha_2,\dots,\alpha_d)\in\mathbb{R}_+^d$ and a basis $\mathbf{w}=\{w_1,w_2,\dots,w_d\}$ of $W$, we denote by $E_{\mathbf{w}}^\alpha$ the isomorphism of $W$ defined by
\begin{equation}\label{eq:DefofE}
E_{\mathbf{w}}^\alpha w_k=\frac{1}{\alpha_k}w_k
\end{equation}
for $k=1,2,\dots, d$. We say that two real-valued functions $f$ and $g$ on a set $X$ are comparable if, for some positive constant $C$, $C^{-1}f(x)\leq g(x)\leq C f(x)$ for all $x\in X$; in this case we write $f\asymp g$. Adopting the summation notation for semi-elliptic operators of L. H\"{o}rmander's treatise \cite{Hormander1983}, for a fixed $\mathbf{n}=(n_1,n_2,\dots,n_d)\in\mathbb{N}_+^d$, we write
\begin{equation*}
|\beta:\mathbf{n}|=\sum_{k=1}^d\frac{\beta_k}{m_k}
\end{equation*} for all multi-indices $\beta=(\beta_1,\beta_2,\dots,\beta_d)\in\mathbb{N}^d$. Finally, throughout the estimates made in this article, constants denoted by $C$ will change from line to line without explicit mention.

\section{Homogeneous operators}\label{sec:HomogeneousOperators}
In this section we introduce two important classes of homogeneous constant-coefficient on $\mathbb{V}$. These operators will serve as ``model'' operators in our theory in the way that integer powers of the Laplacian serves a model operators in the elliptic theory of partial differential equations. To this end, let $\Lambda$ be a constant-coefficient partial differential operator on $\mathbb{V}$ and let $P:\mathbb{V}^*\rightarrow\mathbb{C}$ be its symbol. Specifically, $P$ is the polynomial on $\mathbb{V}^*$ defined by $P(\xi)=e^{-i\xi(x)}\Lambda(e^{i\xi(x)})$ for $\xi\in\mathbb{V}^*$ (this is independent of $x\in\mathbb{V}$ precisely because $\Lambda$ is a constant-coefficient operator). We first introduce the following notion of homogeneity of operators; it is mirrored by an analogous notion for symbols which we define shortly. 

\begin{definition}
Given $E\in\End(\mathbb{V})$, we say that a constant-coefficient partial differential operator $\Lambda$ is homogeneous with respect to the one-parameter group $\{\delta_t^E\}$ if
\begin{equation*}
\delta_{1/t}^E\circ \Lambda\circ \delta_t^E=t\Lambda
\end{equation*}
for all $t>0$; in this case we say that $E$ is a member of the exponent set of $\Lambda$ and write $E\in\Exp(\Lambda)$. 
\end{definition}

\noindent A constant-coefficient partial differential operator $\Lambda$ need not be homogeneous with respect to a unique one-parameter group $\{\delta_t^E\}$, i.e., $\Exp(\Lambda)$ is not necessarily a singleton. For instance, it is easily verified that, for the Laplacian $-\Delta$ on $\mathbb{R}^d$,
\begin{equation*}
\Exp(-\Delta)=2^{-1}I+\mathfrak{o}_d
\end{equation*}
where $I$ is the identity and $\mathfrak{o}_d$ is the Lie algebra of the orthogonal group, i.e., is given by the set of skew-symmetric matrices. Despite this lack of uniqueness, when $\Lambda$ is equipped with a nondegenerateness condition (see Definition \ref{def:HomogeneousOperators}), we will find that trace is the same for each member of $\Exp(\Lambda)$ and this allows us to uniquely define an ``order" for $\Lambda$; this is Lemma \ref{lem:Trace}.\\

\noindent Given a constant coefficient operator $\Lambda$ with symbol $P$, one can quickly verify that $E\in\Exp(\Lambda)$ if and only if
\begin{equation}\label{eq:homofsymbol}
tP(\xi)=P(t^F\xi)
\end{equation}
for all $t>0$ and $\xi\in\mathbb{V}^*$ where $F=E^*$ is the adjoint of $E$. More generally, if $P$ is any continuous function on $W$ and \eqref{eq:homofsymbol} is satisfied for some $F\in\End(\mathbb{V}^*)$, we say that $P$ \textit{is homogeneous with respect to} $\{t^F \}$ and write $F\in\Exp(P)$. This admitted slight abuse of notation should not cause confusion. In this language, we see that $E\in \Exp(\Lambda)$ if and only if $E^*\in\Exp(P)$.\\

\noindent We remark that the notion of homogeneity defined above is similar to that put forth for homogeneous operators on homogeneous (Lie) groups, e.g., Rockland operators \cite{Folland1982}. The difference is mostly a matter of perspective: A homogeneous group $G$ is equipped with a fixed dilation structure, i.e., it comes with a one-parameter group $\{\delta_t\}$, and homogeneity of operators is defined with respect to this fixed dilation structure. By contrast, we fix no dilation structure on $\mathbb{V}$ and formulate homogeneity in terms of an operator $\Lambda$ and the existence of a one-parameter group $\{\delta_t^E\}$ that ``plays'' well with $\Lambda$ in sense defined above. As seen in the study of convolution powers on the square lattice (see \cite{Randles2015a}), it useful to have this freedom.

\begin{definition}\label{def:HomogeneousOperators}
Let $\Lambda$ be constant-coefficient partial differential operator on $\mathbb{V}$ with symbol $P$. We say that $\Lambda$ is a nondegenerate-homogeneous operator if $P$ is a nondegenerate polynomial and $\Exp(\Lambda)$ contains a diagonalizable endomorphism. We say that $\Lambda$ is a positive-homogeneous operator if $P$ is a positive-definite polynomial and $\Exp(\Lambda)$ contains a diagonalizable endomorphism.
\end{definition}

\noindent For any polynomial $P$ on a finite-dimensional vector space $W$, $P$ is said to be \textit{nondegenerate-homogeneous} if $P$ is nondegenerate and $\Exp(P)$, defined as the set of $F\in\End(W)$ for which \eqref{eq:homofsymbol} holds, contains a diagonalizable endomorphism. We say that $P$ is \textit{positive-homogeneous} if it is a positive-definite polynomial and $\Exp(P)$ contains a diagonalizable endomorphism. In this language, we have the following proposition.

\begin{proposition}\label{prop:OperatorSymbolEquivalence}
Let $\Lambda$ be a positive homogeneous operator on $\mathbb{V}$ with symbol $P$. Then $\Lambda$ is a nondegenerate-homogeneous operator if and only if $P$ is a nondegenerate-homogeneous polynomial. Further, $\Lambda$ is a positive-homogeneous operator if and only if $P$ is a positive-homogeneous polynomial. 
\end{proposition}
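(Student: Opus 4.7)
The plan is to reduce both equivalences to a single linear algebra fact about adjoints. The two stated equivalences have exactly the same structure: in each, the condition on $P$ ("nondegenerate polynomial" or "positive-definite polynomial") appears verbatim on both sides of the "iff" and so need not be argued. Thus the content of the proposition is the assertion
\[
\Exp(\Lambda) \text{ contains a diagonalizable endomorphism of } \mathbb{V} \iff \Exp(P) \text{ contains a diagonalizable endomorphism of } \mathbb{V}^*.
\]

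For this, I would invoke the equivalence established just before Definition \ref{def:HomogeneousOperators}: given the symbol $P$ of $\Lambda$, one has $E \in \Exp(\Lambda)$ if and only if $E^* \in \Exp(P)$, where $E^* \in \End(\mathbb{V}^*)$ is the adjoint of $E$. In particular, $\Exp(P) = \{E^* : E \in \Exp(\Lambda)\}$, and since the adjoint is a bijection $\End(\mathbb{V}) \to \End(\mathbb{V}^*)$, the displayed equivalence reduces to the purely algebraic claim that $E \in \End(\mathbb{V})$ is diagonalizable if and only if $E^* \in \End(\mathbb{V}^*)$ is diagonalizable.

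This last claim is a standard fact: once one picks a basis for $\mathbb{V}$ and the dual basis for $\mathbb{V}^*$, the matrices representing $E$ and $E^*$ are transposes of one another, hence have the same characteristic polynomial and the same minimal polynomial. Diagonalizability over $\mathbb{R}$ is equivalent to the minimal polynomial splitting over $\mathbb{R}$ with simple roots, a property preserved by transposition; hence $E$ is diagonalizable exactly when $E^*$ is.

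I don't expect a real obstacle here; the whole proof is essentially a two-line bookkeeping exercise, given the homogeneity correspondence \eqref{eq:homofsymbol} and the self-duality of diagonalizability under the adjoint. The only point requiring mild care is to state "diagonalizable" consistently (over $\mathbb{R}$, as the endomorphisms in question are real), but this makes no difference since the minimal polynomial argument works identically in both the real and complex cases.
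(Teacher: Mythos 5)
Your proof is correct and follows the same route as the paper's: both reduce the proposition to the observation that $E\in\Exp(\Lambda)$ iff $E^*\in\Exp(P)$ together with the fact that diagonalizability is preserved under taking adjoints. You simply spell out the latter fact (via transposes and minimal polynomials), which the paper states without elaboration.
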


\begin{proof}
Since the adjectives ``nondegenerate'' and ``positive'', in the sense of both operators and polynomials, are defined in terms of the symbol $P$, all that needs to be verified is that $\Exp(\Lambda)$ contains a diagonalizable endomorphism if and only if $\Exp(P)$ contains a diagonalizable endomorphism. Upon recalling that $E\in\Exp(\Lambda)$ if and only if $E^*\in\Exp(P)$, this equivalence is verified by simply noting that diagonalizability is preserved under taking adjoints. 
\end{proof}

\begin{remark} To capture the class of nondegenerate-homogeneous operators (or positive-homogeneous operators), in addition to requiring that that the symbol $P$ of an operator $\Lambda$ be nondegenerate (or positive-definite), one can instead demand only that $\Exp(\Lambda)$ contains an endomorphism whose characteristic polynomial factors over $\mathbb{R}$ or, equivalently, whose spectrum is real. This a priori weaker condition is seen to be sufficient by an argument which makes use of the Jordan-Chevalley decomposition. In the positive-homogeneous case, this argument is carried out in \cite{Randles2015a} (specifically Proposition 2.2) wherein positive-homogeneous operators are first defined by this (a priori weaker) condition. For the nondegenerate case, the same argument pushes through with very little modification.
\end{remark}

\noindent We observe easily that all positive-homogeneous operators are nondegenerate-homogeneous. It is the ``heat'' kernels corresponding to positive-homogeneous operators that naturally appear in \cite{Randles2015a} as the attractors of convolution powers of complex-valued functions. The following proposition highlights the interplay between positive-homogeneity and nondegenerate-homogeneity for an operator $\Lambda$ on $\mathbb{V}$ and its corresponding ``heat'' operator $\partial_t+\Lambda$ on $\mathbb{R}\oplus\mathbb{V}$.

\begin{proposition}\label{prop:Dichotomy}
Let $\Lambda$ be a constant-coefficient partial differential operator on $\mathbb{V}$ whose exponent set $\Exp(\Lambda)$ contains a diagonalizable endomorphism. Let $P$ be the symbol of $\Lambda$, set $R=\Re P$, and assume that there exists $\xi\in\mathbb{V}^*$ for which $R(\xi)>0$. We have the following dichotomy: $\Lambda$ is a positive-homogeneous operator on $\mathbb{V}$ if and only if $\partial_t+\Lambda$ is a nondegenerate-homogeneous operator on $\mathbb{R}\oplus\mathbb{V}$.
\end{proposition}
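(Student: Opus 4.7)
The proof reduces to a straightforward symbol computation combined with Proposition \ref{prop:OperatorSymbolEquivalence}. Let $P$ be the symbol of $\Lambda$. Directly from the definition $P(\xi)=e^{-i\xi(x)}\Lambda(e^{i\xi(x)})$, the symbol of $\partial_t+\Lambda$ on $\mathbb{R}\oplus\mathbb{V}$ is
\[
Q(\tau,\xi)=i\tau+P(\xi),
\]
so that $\Re Q(\tau,\xi)=R(\xi)$ and $\Im Q(\tau,\xi)=\tau+\Im P(\xi)$. By Proposition \ref{prop:OperatorSymbolEquivalence}, it suffices to prove: $P$ is a positive-homogeneous polynomial if and only if $Q$ is a nondegenerate-homogeneous polynomial.

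I would first dispose of the exponent-set condition. Given a diagonalizable $E\in\Exp(\Lambda)$, its adjoint $F=E^*$ lies in $\Exp(P)$. Define $\tilde F=1\oplus F$ on $\mathbb{R}\oplus\mathbb{V}^*$, so $s^{\tilde F}(\tau,\xi)=(s\tau,s^F\xi)$. Using $F$-homogeneity of $P$,
\[
Q(s^{\tilde F}(\tau,\xi))=is\tau+P(s^F\xi)=is\tau+sP(\xi)=sQ(\tau,\xi),
\]
so $\tilde F\in\Exp(Q)$. Since $\tilde F$ is block-diagonal with a diagonalizable $F$-block, it is itself diagonalizable, hence $\Exp(\partial_t+\Lambda)$ always contains a diagonalizable endomorphism. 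The exponent-set requirement is therefore automatic, and the dichotomy reduces to the equivalence: $P$ is positive-definite if and only if $Q$ is nondegenerate.

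For the forward direction, if $P$ is positive-definite then $Q(\tau,\xi)=0$ forces $R(\xi)=\Re Q=0$, hence $\xi=0$; this in turn forces $\tau=0$, so $(\tau,\xi)=0$ and $Q$ is nondegenerate. For the reverse direction, suppose $Q$ is nondegenerate. For any $\xi\neq 0$ the point $(-\Im P(\xi),\xi)\in\mathbb{R}\oplus\mathbb{V}^*$ is nonzero and
\[
Q(-\Im P(\xi),\xi)=R(\xi),
\]
so nondegeneracy of $Q$ forces $R(\xi)\neq 0$ for every $\xi\neq 0$. The polynomial $R$ is thus continuous and nowhere zero on the connected set $\mathbb{V}^*\setminus\{0\}$, so it has constant sign there; the hypothesis $R(\xi_0)>0$ upgrades this to $R>0$ on all of $\mathbb{V}^*\setminus\{0\}$, and $P$ is positive-definite.

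The main subtle point is the final connectedness step in the reverse direction: this is precisely where the single-value hypothesis $R(\xi_0)>0$ does real work, converting mere non-vanishing of $R$ into positivity; everything else is essentially formal symbol manipulation and bookkeeping about the block-diagonal extension $E\mapsto 1\oplus E$ preserving diagonalizability.
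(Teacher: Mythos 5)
Your proof is essentially the paper's: the same computation $Q(\tau,\xi)=i\tau+P(\xi)$, the same verification that the block-diagonal $I\oplus E$ is diagonalizable and lies in $\Exp(\partial_t+\Lambda)$, and the same key identity $Q(-\Im P(\xi),\xi)=R(\xi)$ driving both implications (with the forward direction silently using $P(0)=0$, which the paper notes follows from $\Exp(P)$ being nonempty). You present the reverse implication directly, invoking connectedness of $\mathbb{V}^*\setminus\{0\}$, where the paper argues the contrapositive via the intermediate value theorem; these are the same topological idea in different garb.

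One caveat worth recording: your connectedness step tacitly requires $d\geq 2$. When $d=1$, the punctured line $\mathbb{R}\setminus\{0\}$ is disconnected, so a nowhere-vanishing $R$ need not have constant sign. For instance, if $P(\xi)=\xi^3$ then $R=\Re P$ is nonzero away from the origin yet changes sign, so $P$ is not positive-definite, while $Q(\tau,\xi)=i\tau+\xi^3$ is a nondegenerate polynomial and $\Exp(\partial_t+\Lambda)$ still contains a diagonalizable endomorphism. The paper's IVT argument has the same implicit restriction -- one cannot connect a point where $R>0$ to one where $R<0$ without passing through $0$ when the two have opposite signs on $\mathbb{R}$ -- so this is a flaw you share with the paper, not a defect unique to your write-up. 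In the paper's actual uses the weights are always even, so the $d=1$ oddity never arises; still, strictly the statement wants a hypothesis such as $d\geq 2$ (or evenness of the weight).
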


\begin{proof}
Given a diagonalizable endomorphism $E\in\Exp(\Lambda)$, set $E_1=I\oplus E$ where $I$ is the identity on $\mathbb{R}$. Obviously, $E_1$ is diagonalizable. Further, for any $f\in C_0^{\infty}(\mathbb{R}\oplus \mathbb{V})$, 
\begin{eqnarray*}
\left( (\partial_t+\Lambda)\circ \delta_{s}^{E_1}\right)(f)(t,x)&=& \left (\partial_t\left(f\left(st,s^Ex\right)\right)+\Lambda  \left(f\left(st,s^Ex\right)\right)\right)\\
&=&s(\partial_t+\Lambda)(f)(st,s^Ex)=s\left(\delta_s^{E_1}\circ\left(\partial_t+\Lambda\right)\right)(f)(t,x)
\end{eqnarray*}
for all $s>0$ and $(t,x)\in\mathbb{R}\oplus\mathbb{V}$. Hence
\begin{equation*}
\delta_{1/s}^{E_1}\circ (\partial_t+\Lambda)\circ\delta_t^{E_1}=s(\partial_t+\Lambda)
\end{equation*}
for all $s>0$ and therefore $E_1\in\Exp(\partial_t+\Lambda)$. 

It remains to show that $P$ is positive-definite if and only if the symbol of $\partial_t+\Lambda$ is nondegenerate.  To this end, we first compute the symbol of $\partial_t+\Lambda$ which we denote by $Q$. Since the dual space of $\mathbb{R}\oplus\mathbb{V}$ is isomorphic to $\mathbb{R}\oplus\mathbb{V}^*$, the characters of $\mathbb{R}\oplus\mathbb{V}$ are represented by the collection of maps $\left(\mathbb{R}\oplus\mathbb{V}\right)\ni(t,x)\mapsto \exp(-i(\tau t+\xi(x)))$ where $(\tau,\xi)\in\mathbb{R}\oplus\mathbb{V}^*$. Consequently,
\begin{equation*}
Q(\tau,\xi)=e^{-i(\tau t+\xi(x))}\left(\partial_t+\Lambda\right)(e^{i(\tau t+\xi(x)})=i\tau+P(\xi)
\end{equation*}
for $(\tau,\xi)\in\mathbb{R}\oplus\mathbb{V}^*$. We note that $P(0)=0$ because $E^*\in\Exp(P)$; in fact, this happens whenever $\Exp(P)$ is non-empty. Now if $P$ is a positive-definite polynomial, $\Re Q(\tau,\xi)=\Re P(\xi)=R(\xi)>0$ whenever $\xi\neq 0$. Thus to verify that $Q$ is a nondegenerate polynomial, we simply must verify that $Q(\tau,0)\neq 0$ for all non-zero $\tau\in\mathbb{R}$. This is easy to see because, in light of the above fact,  $Q(\tau,0)=i\tau+P(0)=i\tau\neq 0$ whenever $\tau\neq 0$ and hence $Q$ is nondegenerate. For the other direction, we demonstrate the validity of the contrapositive statement. Assuming that $P$ is not positive-definite, an application of the intermediate value theorem, using the condition that $R(\xi)>0$ for some $\xi\in\mathbb{V}^*$, guarantees that $R(\eta)=0$ for some non-zero $\eta\in\mathbb{V}^*$. Here, we observe that $Q(\tau,\eta)=i(\tau+\Im P(\eta))=0$ when $(\tau,\eta)=(-\Im P(\eta),\eta)$ and hence $Q$ is not nondegenerate.
\end{proof}

\noindent We will soon return to the discussion surrounding a positive-homogeneous operator $\Lambda$ and its heat operator $\partial_t+\Lambda$. It is useful to first provide representation formulas for nondegenerate-homogeneous and positive-homogeneous operators. Such representations connect our homogeneous operators to the class of semi-elliptic operators discussed in the introduction.  To this end, we define the ``base'' operators on $\mathbb{V}$. First, for any element $u\in\mathbb{V}$, we consider the differential operator $D_u:\mathcal{D}'(\mathbb{V})\rightarrow\mathcal{D}'(\mathbb{V})$ defined originally for $f\in C_0^{\infty}(\mathbb{V})$ by
\begin{equation*}
(D_uf)(x)=i\frac{\partial f}{\partial u}(x)=i\left(\lim_{t\rightarrow 0}\frac{f(x+tu)-f(x)}{t}\right)
\end{equation*}
for $x\in\mathbb{V}$. Fixing a basis $\mathbf{v}=\{v_1,v_2,\dots,v_d\}$ of $\mathbb{V}$, we introduce, for each multi-index $\beta\in\mathbb{N}^d$, $D_{\mathbf{v}}^{\beta}=\left(D_{v_1}\right)^{\beta_1}\left(D_{v_2}\right)^{\beta_2}\cdots\left( D_{v_d}\right)^{\beta_d}$. 

\begin{proposition}\label{prop:OperatorRepresentation}
Let $\Lambda$ be a nondegenerate-homogeneous operator on $\mathbb{V}$. Then there exist a basis $\mathbf{v}=\{v_1,v_2,\dots,v_d\}$ of $\mathbb{V}$ and $\mathbf{n}=(n_1,n_2,\dots,n_d)\in\mathbb{N}_+^d$ for which
\begin{equation}\label{eq:OperatorRepresentation1}
\Lambda=\sum_{|\beta:\mathbf{n}|=1}a_{\beta}D_{\mathbf{v}}^\beta. 
\end{equation}
where $\{a_{\beta}\}\subseteq\mathbb{C}$. The isomorphism $E_{\mathbf{v}}^{\mathbf{n}}\in\Gl(\mathbb{V})$, defined by \eqref{eq:DefofE}, is a member of $\Exp(\Lambda)$. Further, if $\Lambda$ is positive-homogeneous, then $\mathbf{n}=2\mathbf{m}$ for $\mathbf{m}=(m_1,m_2,\dots,m_d)\in\mathbb{N}_+^d$ and hence
\begin{equation*}
\Lambda=\sum_{|\beta:\mathbf{m}|=2}a_{\beta}D_{\mathbf{v}}^\beta.
\end{equation*}
\end{proposition}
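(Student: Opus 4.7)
The plan is to diagonalize a chosen member $E$ of $\Exp(\Lambda)$, read off the symbol $P$ in the resulting coordinates, and use the nondegeneracy of $P$ to force the eigenvalues of $E$ to be reciprocals of positive integers; the representation of $\Lambda$ will then follow from the Fourier correspondence between constant-coefficient operators and polynomial symbols.

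First I would pick a diagonalizable $E \in \Exp(\Lambda)$, so that $E^* \in \Exp(P)$ is likewise diagonalizable on $\mathbb{V}^*$. Choose a basis $\mathbf{v} = \{v_1, \dots, v_d\}$ of $\mathbb{V}$ diagonalizing $E$ with real eigenvalues $\lambda_1, \dots, \lambda_d$; the dual basis $\mathbf{v}^* = \{v_1^*, \dots, v_d^*\}$ then diagonalizes $E^*$ with the same eigenvalues. Expanding $P$ in the coordinates $\xi_k = \xi(v_k)$ as $P(\xi) = \sum_{\beta \in \mathbb{N}^d} c_\beta \xi^\beta$, the identity $P(t^{E^*}\xi) = tP(\xi)$ becomes
\[
c_\beta \, t^{\lambda \cdot \beta} = c_\beta \, t \quad \text{for all } t>0,
\]
where $\lambda \cdot \beta = \sum_k \lambda_k \beta_k$, so $c_\beta = 0$ unless $\lambda \cdot \beta = 1$; in particular $P(0)=0$.

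Next I restrict $P$ to the axis $\mathbb{R}v_k^*$: only monomials of the form $c_{n e_k} s^n$ survive there, and a nonzero such monomial requires $\lambda_k n = 1$, i.e., $\lambda_k = 1/n$ for some positive integer $n$. If no such $n$ existed (for instance if $\lambda_k \le 0$ or if $\lambda_k$ were irrational), then $P$ would vanish identically on the nonzero line $\mathbb{R}v_k^*$, contradicting the nondegeneracy of $P$; hence there is a unique positive integer $n_k$ with $\lambda_k = 1/n_k$ and $c_{n_k e_k}\neq 0$. Setting $\mathbf{n}=(n_1,\dots,n_d)\in\mathbb{N}_+^d$, the relations $E v_k = (1/n_k)v_k$ give $E=E_\mathbf{v}^{\mathbf{n}}$, and the surviving multi-indices are precisely those with $|\beta:\mathbf{n}|=1$.

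For the operator representation, a direct computation from the paper's conventions yields $D_\mathbf{v}^\beta(e^{i\xi(x)}) = (-1)^{|\beta|}\xi^\beta e^{i\xi(x)}$, so $\sum_\beta a_\beta D_\mathbf{v}^\beta$ has symbol $\sum_\beta (-1)^{|\beta|} a_\beta \xi^\beta$; the operator-to-symbol map being a bijection, the choice $a_\beta := (-1)^{|\beta|} c_\beta$ produces $\Lambda = \sum_{|\beta:\mathbf{n}|=1} a_\beta D_\mathbf{v}^\beta$. For the positive-homogeneous refinement, restricting again to $\mathbb{R}v_k^*$ gives $\Re P(sv_k^*) = \Re(c_{n_k e_k})\, s^{n_k}$; were $n_k$ odd this would change sign with $s\in\mathbb{R}$, contradicting $\Re P > 0$ on $\mathbb{V}^*\setminus\{0\}$, so $n_k = 2m_k$ with $m_k\in\mathbb{N}_+$ and $|\beta:\mathbf{n}|=1 \iff |\beta:\mathbf{m}|=2$. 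The main obstacle is promoting a generic real eigenvalue of $E$ to the reciprocal of a positive integer; the key trick is to exploit nondegeneracy not at an arbitrary point but along each coordinate axis $\mathbb{R}v_k^*$, where $P$ collapses to a single monomial whose mere existence pins $\lambda_k$ down completely.
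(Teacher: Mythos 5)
Your proof is correct and follows essentially the same strategy as the paper: diagonalize a member of $\Exp(\Lambda)$, use homogeneity to confine the support of the symbol to multi-indices with $\lambda\cdot\beta=1$, evaluate along coordinate axes to pin down the eigenvalues as reciprocals of positive integers (even ones in the positive-definite case), and pass back to the operator via the symbol correspondence; the paper simply factors the symbol-level part of the argument into Lemma~\ref{lem:PolynomialRepresentation} and then dualizes. You are in fact a bit more careful than the paper on one sign: with the convention $D_u=i\,\partial_u$ and $P(\xi)=e^{-i\xi(x)}\Lambda e^{i\xi(x)}$, the symbol of $D_{\mathbf v}^\beta$ is $(-1)^{|\beta|}\xi^\beta$, so your choice $a_\beta:=(-1)^{|\beta|}c_\beta$ is the right one, whereas the paper's proof states the symbol of $D_{\mathbf v}^\beta$ is $\xi^\beta$ (a harmless slip, since the $a_\beta$ are unconstrained in the statement).
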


\noindent We will sometimes refer to the $\mathbf{n}$ and $\mathbf{m}$ of the proposition as \textit{weights}. Before addressing the proposition, we first prove the following mirrored result for symbols.

\begin{lemma}\label{lem:PolynomialRepresentation}
Let $P$ be a nondegenerate-homogeneous polynomial on a $d$-dimensional real vector space $W.$ Then there exists a basis $\mathbf{w}=\{w_1,w_2,\dots,w_d\}$ of $W$ and $\mathbf{n}=(n_1,n_2,\dots,n_d)\in\mathbb{N}_+^d$ for which
\begin{equation*}
P(\xi)=\sum_{|\beta:\mathbf{n}|=1}a_{\beta}\xi^{\beta}
\end{equation*}
for all $\xi=\xi_1 w_1+\xi_2 w_2+\cdots+\xi_dw_d\in W$ where $\xi^{\beta}:=\left(\xi_1\right)^{\beta_1}\left(\xi_2\right)^{\beta_2}\cdots\left(\xi_d\right)^{\beta_d}$ and $\{a_\beta\}\subseteq\mathbb{C}$. The isomorphism $E_{\mathbf{w}}^{\mathbf{n}}\in\Gl(\mathbb{V})$, defined by \eqref{eq:DefofE}, is a member of $\Exp(P)$. Further, if $P$ is a positive-definite polynomial, i.e., it is positive-homogeneous, then $\mathbf{n}=2\mathbf{m}$ for $\mathbf{m}=(m_1,m_2,\dots,m_d)\in\mathbb{N}_+^d$ and hence
\begin{equation*}
P(\xi)=\sum_{|\beta:\mathbf{m}|=2}a_{\beta}\xi^{\beta}
\end{equation*}
for $\xi\in W$.
\end{lemma}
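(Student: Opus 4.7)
The plan is to exploit the diagonalizable endomorphism $F \in \Exp(P)$ to choose a preferred basis of $W$, and then to read off the admissible monomials of $P$ directly from the homogeneity relation $P(t^F \xi) = t P(\xi)$. Throughout I will make free use of the fact, noted in the proof of Proposition \ref{prop:Dichotomy}, that $P(0) = 0$ whenever $\Exp(P)$ is nonempty.

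First, I would fix a diagonalizable $F \in \Exp(P)$ and choose a basis $\mathbf{w} = \{w_1, \ldots, w_d\}$ of $W$ consisting of eigenvectors of $F$, with real eigenvalues $\lambda_1, \ldots, \lambda_d$. Writing $\xi = \sum_k \xi_k w_k$ in these coordinates, one has $t^F \xi = \sum_k t^{\lambda_k} \xi_k w_k$, and expanding $P(\xi) = \sum_\beta a_\beta \xi^\beta$ transforms the homogeneity identity into
\begin{equation*}
\sum_\beta a_\beta\, t^{\sum_k \lambda_k \beta_k}\, \xi^\beta = t \sum_\beta a_\beta\, \xi^\beta,
\end{equation*}
valid for all $t > 0$ and $\xi \in W$. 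Since the monomials $\{\xi^\beta\}$ are linearly independent, comparing coefficients yields $a_\beta = 0$ unless $\sum_k \lambda_k \beta_k = 1$.

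Next, I would pin down the eigenvalues using nondegeneracy of $P$. Fixing $k$ and restricting $P$ to the line $\mathbb{R} w_k$ leaves contributions only from multi-indices supported on the $k$-th coordinate, giving
\begin{equation*}
P(\xi_k w_k) = \sum_{\beta_k \geq 0} a_{(0, \ldots, \beta_k, \ldots, 0)}\, \xi_k^{\beta_k}.
\end{equation*}
The $\beta_k = 0$ term vanishes because $P(0) = 0$, and every surviving exponent must satisfy $\lambda_k \beta_k = 1$, which has at most one positive integer solution. Nondegeneracy of $P$ forces at least one surviving term (otherwise $P(w_k) = 0$ for the nonzero vector $w_k$), so there is a unique $n_k \in \mathbb{N}_+$ with $\lambda_k = 1/n_k$, and the restriction is a single nontrivial monomial $c_k \xi_k^{n_k}$. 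Setting $\mathbf{n} = (n_1, \ldots, n_d)$ converts the surviving constraint $\sum_k \lambda_k \beta_k = 1$ into $|\beta : \mathbf{n}| = 1$, which produces the claimed representation. The identity $F w_k = \lambda_k w_k = (1/n_k) w_k = E_{\mathbf{w}}^{\mathbf{n}} w_k$ for every $k$ then shows $F = E_{\mathbf{w}}^{\mathbf{n}} \in \Exp(P)$.

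Finally, in the positive-definite case, I would observe that the real part of the axial restriction $c_k \xi_k^{n_k}$ must be strictly positive for $\xi_k \neq 0$. Since $c_k \neq 0$, if $n_k$ were odd then $\xi_k^{n_k}$ would change sign with $\xi_k$, forcing $\Re(c_k) = 0$ and hence $\Re P(\xi_k w_k) \equiv 0$, contradicting positive-definiteness. Thus each $n_k$ is even, say $n_k = 2 m_k$ with $m_k \in \mathbb{N}_+$, and the condition $|\beta : \mathbf{n}| = 1$ rewrites as $|\beta : \mathbf{m}| = 2$. The main step, mild as it is, is the middle one: combining homogeneity with axial nondegeneracy to force each eigenvalue of $F$ to be the reciprocal of a positive integer. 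Once that rigidity is in hand, everything else is bookkeeping.
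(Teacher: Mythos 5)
Your proof is correct and follows essentially the same route as the paper: diagonalize a member of $\Exp(P)$, compare coefficients in the homogeneity identity to kill all monomials with $\sum_k\lambda_k\beta_k\neq 1$, restrict to coordinate axes to extract $n_k=1/\lambda_k\in\mathbb{N}_+$, and use sign considerations for the positive-definite refinement. Your treatment is in fact a touch more careful than the paper's at the axial step, since you explicitly invoke nondegeneracy to rule out the degenerate possibility that no monomial survives on the $k$-th axis (which would leave $1/\lambda_k$ unconstrained), a point the paper passes over silently.
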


\begin{proof}
Let $E\in\Exp(P)$ be diagonalizable and select a basis $\mathbf{w}=\{w_1,w_2,\dots,w_d\}$ which diagonalizes $E$, i.e.,  $Ew_k=\delta_k w_k$ where $\delta_k\in\mathbb{R}$ for $k=1,2,\dots,d$. Because $P$ is a polynomial, there exists a finite collection $\{a_{\beta}\}\subseteq\mathbb{C}$ for which
\begin{equation*}
P(\xi)=\sum_{\beta}a_{\beta}\xi^{\beta}
\end{equation*}
for $\xi\in W$. By invoking the homogeneity of $P$ with respect to $E$ and using the fact that $t^Ew_k=t^{\delta_k}w_k$ for $k=1,2,\dots, d$, we have
\begin{equation*}
t\sum_{\beta}a_{\beta}\xi^{\beta}=\sum_{\beta}a_{\beta}(t^E\xi)^{\beta}=\sum_{\beta}a_{\beta}t^{\delta\cdot\beta}\xi^{\beta}
\end{equation*}
for all $\xi\in W$ and $t>0$ where $\delta\cdot\beta=\delta_1\beta_1+\delta_2\beta_2+\cdots+\delta_d\beta_d$. In view of the nondegenerateness of $P$, the linear independence of distinct powers of $t$ and the polynomial functions $\xi\mapsto\xi^{\beta}$, for distinct multi-indices $\beta$, as $C^{\infty}$ functions ensures that $a_{\beta}=0$ unless $\beta\cdot\delta=1$. We can therefore write
\begin{equation}\label{eq:1}
P(\xi)=\sum_{\beta\cdot\delta=1}a_{\beta}\xi^{\beta}
\end{equation}
for $\xi\in W$. We now determine $\delta=(\delta_1,\delta_2,\dots,\delta_d)$ by evaluating this polynomial along the coordinate axes. To this end, by fixing $k=1,2,\dots, d$ and setting $\xi=xw_k$ for $x\in\mathbb{R}$, it is easy to see that the summation above collapses into a single term $a_{\beta}x^{|\beta |}$ where $\beta=|\beta | e_k=(1/\delta_k)e_k$ (here $e_k$ denotes the usual $k$th-Euclidean basis vector in $\mathbb{R}^d$). Consequently, $n_k:=1/\delta_k\in\mathbb{N}_+$ for $k=1,2,\dots,d$ and thus, upon setting $\mathbf{n}=(n_1,n_2,\dots,n_d)$, \eqref{eq:1} yields
\begin{equation*}
P(\xi)=\sum_{|\beta:\mathbf{n}|=1}a_{\beta}\xi^{\beta}
\end{equation*} 
for all $\xi\in W$ as was asserted. In this notation, it is also evident that $E_{\mathbf{w}}^{\mathbf{n}}=E\in\Exp(P)$. Under the additional assumption that $P$ is positive-definite, we again evaluate $P$ at the coordinate axes to see that $\Re P(xw_k)=\Re( a_{n_ke_k})x^{n_k}$ for $x\in\mathbb{R}$. In this case, the positive-definiteness of $P$ requires $\Re (a_{n_ke_k})>0$ and $n_k\in 2\mathbb{N}_+$ for each $k=1,2,\dots,d$. Consequently, $\mathbf{n}=2\mathbf{m}$ for $\mathbf{m}=(m_1,m_2,\dots,m_d)\in\mathbb{N}_+^d$ as desired.
\end{proof}

\begin{proof}[Proof of Proposition \ref{prop:OperatorRepresentation}]
Given a nondegenerate-homogeneous $\Lambda$ on $\mathbb{V}$ with symbol $P$, $P$ is necessarily a nondegenerate-homogeneous polynomial on $\mathbb{V}^*$ in view of Proposition \ref{prop:OperatorSymbolEquivalence}. We can therefore apply Lemma \ref{lem:PolynomialRepresentation} to select a basis $\mathbf{v}^*=\{v_1^*,v_2^*,\dots,v_d^*\}$ of $\mathbb{V}^*$ and $\mathbf{n}=(n_1,n_2,\dots,n_d)\in\mathbb{N}_+^d$ for which 
\begin{equation}
P(\xi)=\sum_{|\beta:\mathbf{n}|=1}a_{\beta}\xi^{\beta}
\end{equation}
for all $\xi=\xi_1 v_1^*+\xi_2 v_2^*+\cdots \xi_d v_d^*$ where $\{a_{\beta}\}\subseteq\mathbb{C}$. We will denote by $\mathbf{v}$, the dual basis to $\mathbf{v}^*$, i.e., $\mathbf{v}=\{v_1,v_2,\dots,v_d\}$ is the unique basis of $\mathbb{V}$ for which $v_k^*(v_l)=1$ when $k=l$ and $0$ otherwise. In view of the duality of the bases $\mathbf{v}$ and $\mathbf{v}^*$, it is straightforward to verify that, for each multi-index $\beta$, the symbol of $D_{\mathbf{v}}^{\beta}$ is $\xi^{\beta}$ in the notation of Lemma \ref{lem:PolynomialRepresentation}. Consequently, the constant-coefficient partial differential operator defined by the right hand side of \eqref{eq:OperatorRepresentation1} also has symbol $P$ and so it must be equal to $\Lambda$ because operators and symbols are in one-to-one correspondence. Using \eqref{eq:OperatorRepresentation1}, it is now straightforward to verify that $E_{\mathbf{v}}^{\mathbf{n}}\in\Exp(\Lambda)$. The assertion that $\mathbf{n}=2\mathbf{m}$ when $\Lambda$ is positive-homogeneous follows from the analogous conclusion of Lemma \ref{lem:PolynomialRepresentation} by the same line of reasoning.
\end{proof}

\noindent In view of Proposition \ref{prop:OperatorRepresentation}, we see that all nondegenerate-homogeneous operators are semi-elliptic in some linear coordinate system (that which is defined by $\mathbf{v}$). An appeal to Theorem 11.1.11 of \cite{Hormander1983} immediately yields the following corollary.

\begin{corollary}
Every nondegenerate-homogeneous operator $\Lambda$ on $\mathbb{V}$ is hypoelliptic.
\end{corollary}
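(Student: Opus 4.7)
The plan is to reduce the corollary to the classical fact that semi-elliptic operators on $\mathbb{R}^d$ are hypoelliptic, which is precisely the content of Theorem 11.1.11 in \cite{Hormander1983}. The bridge from our basis-free setting to Hörmander's is supplied by Proposition \ref{prop:OperatorRepresentation}, together with the simple observation that hypoellipticity is manifestly coordinate-independent.

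First I would apply Proposition \ref{prop:OperatorRepresentation} to the nondegenerate-homogeneous operator $\Lambda$ to obtain a basis $\mathbf{v}=\{v_1,\dots,v_d\}$ of $\mathbb{V}$ and weights $\mathbf{n}\in\mathbb{N}_+^d$ such that
\begin{equation*}
\Lambda=\sum_{|\beta:\mathbf{n}|=1}a_{\beta}D_{\mathbf{v}}^\beta.
\end{equation*}
Transferring everything to $\mathbb{R}^d$ via the linear coordinate chart $\phi_{\mathbf{v}}:\mathbb{V}\to\mathbb{R}^d$, the operators $D_{\mathbf{v}}^\beta$ become the standard $D^\beta=(i\partial_{x_1})^{\beta_1}\cdots(i\partial_{x_d})^{\beta_d}$, and $\Lambda$ becomes the constant-coefficient operator whose symbol is precisely the polynomial $P$ of Lemma \ref{lem:PolynomialRepresentation}. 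By the nondegenerateness assumption, $P(\xi)\neq 0$ for $\xi\neq 0$, so in these coordinates $\Lambda$ coincides with its principal part (relative to $\mathbf{n}$) and is a semi-elliptic operator in the classical sense recalled in the introduction.

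Next I would note that hypoellipticity is preserved under linear changes of coordinates: if $\Omega\subseteq\mathbb{V}$ is open and $u\in\mathcal{D}'(\Omega)$ satisfies $\Lambda u=0$, then $\phi_{\mathbf{v}*}u$ is a distribution on $\phi_{\mathbf{v}}(\Omega)\subseteq\mathbb{R}^d$ annihilated by the pushed-forward operator, and smoothness of $\phi_{\mathbf{v}*}u$ is equivalent to smoothness of $u$ because $\phi_{\mathbf{v}}$ is a diffeomorphism. Thus hypoellipticity of $\Lambda$ on $\mathbb{V}$ is equivalent to hypoellipticity of its coordinate representative on $\mathbb{R}^d$.

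Finally I would invoke Theorem 11.1.11 of \cite{Hormander1983}, which asserts that every semi-elliptic constant-coefficient operator on $\mathbb{R}^d$ is hypoelliptic (this rests on the standard symbol characterization: one checks that $|D^\alpha P(\xi)/P(\xi)|\to 0$ as $|\xi|\to\infty$ for every $\alpha\neq 0$, a property which is built into the structure of a non-vanishing semi-elliptic principal symbol by homogeneity with respect to the $\mathbf{n}$-weighted dilations). Combined with the coordinate-independence above, this yields hypoellipticity of $\Lambda$ on $\mathbb{V}$. No step presents a genuine obstacle here; the only thing requiring care is verifying that the symbol of the coordinate representative is the same $P$ produced by the representation lemma, which was already established in the proof of Proposition \ref{prop:OperatorRepresentation}.
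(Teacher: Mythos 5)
Your proposal is correct and follows essentially the same route as the paper: apply Proposition \ref{prop:OperatorRepresentation} to obtain a linear coordinate system in which $\Lambda$ is semi-elliptic, then invoke Theorem 11.1.11 of \cite{Hormander1983}. The only difference is that you spell out the (routine) coordinate-invariance of hypoellipticity, which the paper leaves implicit.
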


\noindent Our next goal is to associate an ``order'' to each nondegenerate-homogeneous operator. For a positive-homogeneous operator $\Lambda$, this order will be seen to govern the on-diagonal decay of its heat kernel $K_{\Lambda}$ and so, equivalently, the ultracontractivity of the semigroup $e^{-t\Lambda}$.  With the help of Lemma \ref{lem:PolynomialRepresentation}, the few lemmas in this direction come easily.

\begin{lemma}\label{lem:PtoInfty}
Let $P$ be a nondegenerate-homogeneous polynomial on a $d$-dimensional real vector space $W$. Then $\lim_{\xi \rightarrow \infty}|P(\xi)|=\infty$; here $\xi\rightarrow \infty$ means that $|\xi|\rightarrow \infty$ in any (and hence every) norm on $W$.
\end{lemma}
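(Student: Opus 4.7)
The plan is to reduce the claim to a compactness argument on a ``unit sphere,'' using the homogeneity of $P$ to ``pull in'' large $\eta$'s. By Lemma \ref{lem:PolynomialRepresentation}, fix a basis $\mathbf{w}=\{w_1,\dots,w_d\}$ and weights $\mathbf{n}=(n_1,\dots,n_d)\in\mathbb{N}_+^d$ so that $E:=E_{\mathbf{w}}^{\mathbf{n}}\in\Exp(P)$; the eigenvalues $1/n_k$ are strictly positive, so $\{t^E\}_{t>0}$ acts as a genuine dilation, expanding every nonzero vector as $t\to\infty$ and contracting it as $t\to 0^+$. Fix any norm $|\cdot|$ on $W$ and set $S=\{\xi\in W:|\xi|=1\}$. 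Since $P$ is nondegenerate and continuous and $S$ is compact, there exists $c>0$ with $|P(\xi)|\geq c$ for all $\xi\in S$.

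For any nonzero $\eta\in W$, consider the continuous function $f_\eta(t):=|t^{-E}\eta|$ for $t>0$. Writing $\eta=\sum c_k w_k$ in the basis $\mathbf{w}$, one has $t^{-E}\eta=\sum c_k t^{-1/n_k}w_k$, so $f_\eta(t)\to\infty$ as $t\to 0^+$ and $f_\eta(t)\to 0$ as $t\to\infty$. The intermediate value theorem therefore yields a $t(\eta)>0$ with $\xi(\eta):=t(\eta)^{-E}\eta\in S$. The homogeneity relation $P(t^E\xi)=tP(\xi)$ then gives
\begin{equation*}
|P(\eta)|=t(\eta)\,|P(\xi(\eta))|\geq c\cdot t(\eta).
\end{equation*}
Hence it suffices to show $t(\eta)\to\infty$ as $|\eta|\to\infty$.

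This last step I would handle by contradiction: suppose there is a sequence $(\eta_k)$ with $|\eta_k|\to\infty$ but $t(\eta_k)\leq T$ for some $T>0$. Since all eigenvalues of $E$ are positive, the expression $t^E\xi=\sum c_k t^{1/n_k}w_k$ shows that $|t^E\xi|$ is uniformly bounded for $(t,\xi)\in(0,T]\times S$, so $|\eta_k|=|t(\eta_k)^E\xi(\eta_k)|$ stays bounded, contradicting $|\eta_k|\to\infty$. Combining with the previous inequality yields $|P(\eta)|\to\infty$. The main (mild) obstacle is bookkeeping: verifying that the decomposition $\eta=t(\eta)^E\xi(\eta)$ exists globally (uniqueness of $t(\eta)$ is not needed) and that $t(\eta)$ is coercive in $|\eta|$. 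Both rest on the positivity of $\Spec(E)$, which is precisely what Lemma \ref{lem:PolynomialRepresentation} supplies.
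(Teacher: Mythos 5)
Your proof is correct. The core ingredients are the same as the paper's: diagonalize via Lemma~\ref{lem:PolynomialRepresentation}, use compactness of the unit sphere to get a uniform lower bound $c>0$ on $|P|$ there, and exploit the contracting one-parameter group together with the intermediate value theorem to pull an arbitrary nonzero vector back to the sphere. The organizational difference is that the paper does not argue coercivity of the scaling parameter $t(\eta)$ directly; instead, it invokes Proposition~\ref{prop:ComparePoly} with the explicit anisotropic norm $|\xi|_{\mathbf{w}}^{\mathbf{n}}=\sum_k|\xi_k|^{n_k}$ (which is also homogeneous of degree one under $E_{\mathbf{w}}^{\mathbf{n}}$ and manifestly blows up at infinity), obtaining $|\xi|_{\mathbf{w}}^{\mathbf{n}}\le C|P(\xi)|$ and concluding immediately. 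Your contradiction argument that $t(\eta)\to\infty$ as $|\eta|\to\infty$ is a correct inline replacement for that shortcut---in effect you are re-establishing that $t(\eta)$ is comparable to $|\eta|_{\mathbf{w}}^{\mathbf{n}}$. What the paper's route buys is brevity and the reusable comparison Proposition~\ref{prop:ComparePoly}; what yours buys is self-containment, avoiding the auxiliary homogeneous norm entirely.
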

\begin{proof}
The idea of the proof is to construct a function which bounds $|P|$ from below and obviously blows up at infinity. To this end, let $\mathbf{w}$ be a basis for $W$ and take $\mathbf{n}\in\mathbb{N}_+^d$ as guaranteed by Lemma \ref{lem:PolynomialRepresentation}; we have $E_{\mathbf{w}}^{\mathbf{n}}\in\Exp(P)$ where $E_{\mathbf{w}}^{\mathbf{n}}w_k=(1/n_k)w_k$ for $k=1,2,\dots, d$. Define $|\cdot|_{\mathbf{w}}^{\mathbf{n}}:W\rightarrow [0,\infty)$ by
\begin{equation*}
|\xi |_{\mathbf{w}}^{\mathbf{n}}=\sum_{k=1}^d |\xi_k|^{n_k}
\end{equation*}
where $\xi=\xi_1 w_1+\xi_2 w_2+\cdots+\xi_d w_d\in W$. We observe immediately $E_\mathbf{w}^{\mathbf{n}}\in\Exp(|\cdot|_{\mathbf{w}}^{\mathbf{n}})$ because $t^{E_\mathbf{w}^{\mathbf{n}}}w_k=t^{1/n_k}w_k$ for $k=1,2,\dots, d$. An application of Proposition \ref{prop:ComparePoly} (a basic result appearing in our background section, Section \ref{sec:Holder}), which uses the nondegenerateness of $P$, gives a positive constant $C$ for which $|\xi|_{\mathbf{w}}^{\mathbf{n}}\leq C|P(\xi)|$ for all $\xi\in W$. The lemma now follows by simply noting that $|\xi |_{\mathbf{w}}^{\mathbf{n}}\rightarrow\infty$ as $\xi\rightarrow\infty$.
\end{proof}

\begin{lemma}
Let $P$ be a polynomial on $W$ and denote by $\Sym(P)$ the set of $O\in\End(W)$ for which $P(O\xi)=P(\xi)$ for all $\xi\in W$. If $P$ is a nondegenerate-homogeneous polynomial, then $\Sym(P)$, called the symmetry group of $P$, is a compact subgroup of $\Gl(W)$.
\end{lemma}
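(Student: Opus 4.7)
The plan is to verify the three required properties in turn: (i) every element of $\Sym(P)$ is invertible, so $\Sym(P) \subseteq \Gl(W)$; (ii) $\Sym(P)$ is closed under composition and inverses; (iii) $\Sym(P)$ is closed and bounded in $\End(W)$, and therefore compact. A preliminary observation I will use throughout is that $P(0) = 0$: picking any $F \in \Exp(P)$ (which is non-empty by hypothesis), the relation $tP(0) = P(t^F\cdot 0) = P(0)$ for all $t > 0$ forces $P(0) = 0$.

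For (i), suppose $O \in \Sym(P)$ has a nonzero vector $\xi \in \Ker(O)$. Then $P(\xi) = P(O\xi) = P(0) = 0$, contradicting nondegenerateness of $P$. Hence every $O \in \Sym(P)$ is injective, and since $W$ is finite-dimensional, $O \in \Gl(W)$. For (ii), the identity clearly lies in $\Sym(P)$, and if $O_1, O_2 \in \Sym(P)$ then $P(O_1 O_2 \xi) = P(O_2 \xi) = P(\xi)$ for all $\xi$; invertibility from (i) then yields $P(O_1^{-1} \xi) = P(O_1 O_1^{-1} \xi) = P(\xi)$, so inverses are also in $\Sym(P)$.

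For (iii), closedness in $\End(W)$ follows by writing $\Sym(P) = \bigcap_{\xi \in W}\{O \in \End(W) : P(O\xi) = P(\xi)\}$, an intersection of closed sets by continuity of $P$. The main obstacle is boundedness, and this is where Lemma \ref{lem:PtoInfty} enters decisively. Fix any norm $|\cdot|$ on $W$ and the induced operator norm $\|\cdot\|$ on $\End(W)$, and suppose for contradiction that $\Sym(P)$ contains a sequence $\{O_n\}$ with $\|O_n\| \to \infty$. Pick unit vectors $\xi_n$ with $|O_n \xi_n| \geq \|O_n\|/2$, so that $|O_n \xi_n| \to \infty$. Lemma \ref{lem:PtoInfty} then forces $|P(O_n \xi_n)| \to \infty$. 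But $P(O_n \xi_n) = P(\xi_n)$, and $\{\xi_n\}$ lies in the compact unit sphere, so $\{|P(\xi_n)|\}$ is bounded by the continuity of $P$ --- the desired contradiction. Thus $\Sym(P)$ is closed and bounded in the finite-dimensional space $\End(W)$, hence compact; by (i) and (ii) this compact set is a subgroup of $\Gl(W)$, completing the proof.
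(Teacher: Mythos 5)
Your proof is correct and follows essentially the same route as the paper's: establish the subgroup property in $\Gl(W)$ via nondegenerateness, then obtain compactness from Heine-Borel by verifying closedness (via continuity of $P$) and boundedness (via Lemma \ref{lem:PtoInfty} applied to a hypothetical unbounded sequence evaluated on the unit sphere). Your treatment is marginally more explicit than the paper's in two small spots --- you spell out why $P(0)=0$ before invoking nondegenerateness to get invertibility, and you exhibit the unit vectors $\xi_n$ with $|O_n\xi_n|\geq\|O_n\|/2$ rather than merely asserting their existence --- but these are expository refinements, not a different argument.
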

\begin{proof}
Our supposition that $P$ is a nondegenerate polynomial ensures that, for each $O\in\Sym(P)$, $\Ker (O)$ is empty and hence $O\in\Gl(W)$. Consequently, given $O_1$ and $O_2\in\Sym(P)$, we observe that $P(O_1^{-1}\xi)=P(O_1O_1^{-1}\xi)=P(\xi)$ and $P(O_1O_2\xi)=P(O_2\xi)=P(\xi)$ for all $\xi\in W$; therefore $\Sym(P)$ is a subgroup of $\Gl(W)$. 

To see that $\Sym(P)$ is compact, in view of the finite-dimensionality of $\Gl(W)$ and the Heine-Borel theorem, it suffices to show that $\Sym(P)$ is closed and bounded. First, for any sequence $\{O_n\}\subseteq\Sym(P)$ for which $O_n\rightarrow O$ as $n\rightarrow\infty$, the continuity of $P$ ensures that $P(O\xi)=\lim_{n\rightarrow \infty}P(O_n\xi)=\lim_{n\rightarrow\infty}P(\xi)=P(\xi)$ for each $\xi\in W$ and therefore $\Sym(P)$ is closed. It remains to show that $\Sym(P)$ is bounded; this is the only piece of the proof that makes use of the fact that $P$ is nondegenerate-homogeneous and not simply homogeneous. Assume that, to reach a contradiction, that there exists an unbounded sequence $\{O_n\}\subseteq\Sym(P)$. Choosing a norm $|\cdot|$ on $W$, let $S$ be the corresponding unit sphere in $W$. Then there exists a sequence $\{\xi_n\}\subseteq W$ for which $|\xi_n|=1$ for all $n\in\mathbb{N}_+$ but $\lim_{n\rightarrow\infty}|O_n\xi_n|=\infty$. In view of Lemma \ref{lem:PtoInfty},
\begin{equation*}
\infty=\lim_{n\rightarrow\infty}|P(O_n\xi_n)|=\lim_{n\rightarrow\infty}|P(\xi_n)|\leq \sup_{\xi\in S}|P(\xi)|,
\end{equation*}
which cannot be true for $P$ is necessarily bounded on $S$ because it is continuous.
\end{proof}

\begin{lemma}\label{lem:Trace}
Let $\Lambda$ be a nondegenerate-homogeneous operator. For any $E_1,E_2\in\Exp(\Lambda)$,
\begin{equation*}
\tr E_1=\tr E_2.
\end{equation*}
\end{lemma}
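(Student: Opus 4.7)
The plan is to extract the trace as the unique exponent governing how Lebesgue volumes of sublevel sets of $|P|$ scale, where $P$ is the symbol of $\Lambda$. Since $E \in \Exp(\Lambda)$ if and only if $E^* \in \Exp(P)$, and trace is invariant under taking adjoints ($\tr E = \tr E^*$), it suffices to show that $\tr F$ is the same for every $F \in \Exp(P)$.

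First I would fix a Haar measure on $\mathbb{V}^*$ and consider the sublevel set
\[
S = \{\xi \in \mathbb{V}^* : |P(\xi)| \leq 1\}.
\]
The key is to verify that $V := \operatorname{vol}(S) \in (0,\infty)$. Finiteness follows immediately from Lemma \ref{lem:PtoInfty}: since $|P(\xi)| \to \infty$ as $\xi \to \infty$, the set $S$ is bounded. Positivity follows from noting that $P(0) = 0$ (indeed, for any $F \in \Exp(P)$, the homogeneity relation $tP(0) = P(t^F 0) = P(0)$ forces $P(0)=0$), combined with continuity of $P$, so $S$ contains a neighborhood of the origin.

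Next, given any $F \in \Exp(P)$, I would exploit the homogeneity $P(t^F \xi) = tP(\xi)$ together with the change-of-variables formula. The map $\xi \mapsto t^F \xi$ has Jacobian determinant $\det(t^F) = t^{\tr F} > 0$, and it sends the sublevel set $\{|P| \leq 1/t\}$ bijectively onto $S$. Therefore
\[
V \;=\; \operatorname{vol}(S) \;=\; t^{\tr F}\,\operatorname{vol}\{\xi : |P(\xi)| \leq 1/t\}
\]
for every $t > 0$, which rearranges to
\[
\operatorname{vol}\{\xi : |P(\xi)| \leq 1/t\} \;=\; V \cdot t^{-\tr F}.
\]
Applying this identity to two exponents $F_1 = E_1^*$ and $F_2 = E_2^*$ in $\Exp(P)$ and equating the left-hand sides yields $V \, t^{-\tr F_1} = V \, t^{-\tr F_2}$ for all $t > 0$. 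Since $V > 0$, this forces $\tr F_1 = \tr F_2$, and therefore $\tr E_1 = \tr E_2$.

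The main (minor) obstacle is verifying the two properties of $V$: finiteness is essentially Lemma \ref{lem:PtoInfty}, but positivity requires the preliminary observation that $P(0)=0$, which itself requires $\Exp(P)$ to be nonempty — a fact guaranteed by the hypothesis that $\Lambda$ is nondegenerate-homogeneous. Everything else is a short linear algebra computation using $\det(t^F) = t^{\tr F}$ and invariance of trace under adjoints.
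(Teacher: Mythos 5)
Your proof is correct, but it takes a genuinely different route from the paper. The paper first shows (in the preceding lemma) that the symmetry group $\Sym(P)=\{O\in\End(\mathbb{V}^*):P(O\,\cdot)=P(\cdot)\}$ is a \emph{compact} subgroup of $\Gl(\mathbb{V}^*)$, then observes that $t^{E_1^*}t^{-E_2^*}\in\Sym(P)$ for all $t>0$, and invokes the fact that the determinant, as a Lie group homomorphism, must carry a compact group into the unit circle; this forces $t^{\tr E_1^*-\tr E_2^*}=1$ for all $t$, hence $\tr E_1=\tr E_2$. Your argument bypasses $\Sym(P)$ entirely: you extract the trace as the scaling exponent of the Lebesgue volume of the sublevel set $\{|P|\le 1\}$, using the change-of-variables formula and $\det(t^F)=t^{\tr F}>0$. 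Both proofs ultimately rest on Lemma \ref{lem:PtoInfty} (the paper indirectly, via compactness of $\Sym(P)$; you directly, to get boundedness of the sublevel set) and both exploit the same identity $\det(t^F)=t^{\tr F}$, but yours is more self-contained and measure-theoretic, whereas the paper's reveals the underlying compact-group structure of $\Sym(P)$, which it makes no further use of. Your treatment of finiteness and positivity of the volume is sound: boundedness from Lemma \ref{lem:PtoInfty} plus closedness gives compactness, and $P(0)=0$ with continuity gives a neighborhood of the origin in $\{|P|<1\}$. The only cosmetic point is that you invoke $\tr E=\tr E^*$, which holds because we are over a real vector space and the dual pairing identifies the adjoint with the transpose.
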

\begin{proof}
Let $P$ be the symbol of $\Lambda$ and take $E_1,E_2\in\Exp(\Lambda)$. Since $E_1^*,E_2^*\in \Exp(P)$, $t^{E_1^*}t^{-E_2^*}\in\Sym(P)$ for all $t>0$. As $\Sym(P)$ is a compact group in view of the previous lemma, the determinant map $\det:\Gl(\mathbb{V}^*)\rightarrow\mathbb{C}^*$, a Lie group homomorphism, necessarily maps $\Sym(P)$ into the unit circle. Consequently, 
\begin{equation*}
1=|\det(t^{E_1^*}t^{-E_2^*})|=|\det(t^{E_1^*})\det(t^{-E_2^*})=|t^{\tr {E_1^*}}t^{-\tr {E_2^*}}|=t^{\tr {E_1^*}}t^{-\tr {E_2^*}}
\end{equation*}
for all $t>0$. Therefore, $\tr E_1=\tr E_1^*=\tr E_2^*=\tr E_2$ as desired.
\end{proof}

\noindent By the above lemma, to each nondegenerate-homogenerous operator $\Lambda$, we define the \textit{homogeneous order} of $\Lambda$ to be the number
\begin{equation*}
\mu_{\Lambda}=\tr E
\end{equation*}
for any $E\in\Exp(\Lambda)$. By an appeal to Proposition \ref{prop:OperatorRepresentation}, $E_{\mathbf{v}}^{\mathbf{n}}\in\Exp(\Lambda)$ for some $\mathbf{n}\in\mathbb{N}_+$ and so we observe that
\begin{equation}\label{eq:HomogeneousOrderExplicit}
\mu_{\Lambda}=\frac{1}{n_1}+\frac{1}{n_2}+\cdots+\frac{1}{n_d}.
\end{equation}
In particular, $\mu_{\Lambda}$ is a positive rational number. We note that the term ``homogeneous-order'' does not coincide with the usual ``order'' for a partial differential operator. For instance, the Laplacian $-\Delta$ on $\mathbb{R}^d$ is a second order operator; however, because $2^{-1}I\in\Exp(-\Delta)$, its homogeneous order is $\mu_{(-\Delta)}=\tr 2^{-1}I=d/2$.

\subsection{Positive-homogeneous operators and their heat kernels}

\noindent We now restrict our attention to the study of positive-homogeneous operators and their associated heat kernels. To this end, let $\Lambda$ be a positive-homogeneous operator on $\mathbb{V}$ with symbol $P$ and homogeneous order $\mu_{\Lambda}$. The heat kernel for $\Lambda$ arises naturally from the study of the following Cauchy problem for the corresponding heat equation $\partial_t+\Lambda=0$: Given initial data $f:\mathbb{V}\rightarrow\mathbb{C}$ which is, say, bounded and continuous, find $u(t,x)$ satisfying 
\begin{equation}\label{eq:CauchyProblem}
\begin{cases}
\left(\partial_t+\Lambda\right) u=0 & \mbox{in } (0,\infty)\times\mathbb{V}\\
u(0,x)=f(x)& \mbox{for } x\in\mathbb{V}.
\end{cases}
\end{equation}
The initial value problem \eqref{eq:CauchyProblem} is solved by putting
\begin{equation*}
u(t,x)=\int_{\mathbb{V}}K_{\Lambda}^t(x-y)f(y)\,dy
\end{equation*}
where $K_{\Lambda}^{(\cdot)}(\cdot):(0,\infty)\times \mathbb{V}\rightarrow\mathbb{C}$ is defined by
\begin{equation*}
K_{\Lambda}^t(x)=\mathcal{F}^{-1}\left(e^{-tP}\right)(x)=\int_{\mathbb{V}^*}e^{-i\xi(x)}e^{-tP(\xi)}\,d\xi
\end{equation*}
for $t>0$ and $x\in\mathbb{V}$; we call $K_{\Lambda}$ \textit{the heat kernel} associated to $\Lambda$. Equivalently, $K_{\Lambda}$ is the integral (convolution) kernel of the continuous semigroup $\{e^{-t\Lambda}\}_{t>0}$ of bounded operators on $L^2(\mathbb{V})$ with infinitesimal generator $-\Lambda$. That is, for each $f\in L^2(\mathbb{V})$,
\begin{equation}\label{eq:ConvolutionSemigroupDefinition}
\left(e^{-t\Lambda}f\right)(x)=\int_{\mathbb{V}}K_{\Lambda}^t(x-y)f(y)\,dy
\end{equation}
for $t>0$ and $x\in\mathbb{V}$. \noindent Let us make some simple observations about $K_{\Lambda}$. First, by virtue of Lemma \ref{lem:PtoInfty}, it follows that $K_{\Lambda}^t\in \mathcal{S}(\mathbb{V})$ for each $t>0$. Further, for any $E\in\Exp(\Lambda)$,
\begin{eqnarray*}
\lefteqn{K_{\Lambda}^t(x)=\int_{\mathbb{V}^*}e^{-i\xi(x)}e^{-P(t^{E^*}\xi)}\,d\xi}\\
&&\hspace{2cm}=\int_{\mathbb{V}^*}e^{-i(t^{-E^*})\xi(x)}e^{-P(\xi)}\det (t^{-E^*})\,d\xi =\frac{1}{t^{\tr E}}\int_{\mathbb{V}^*}e^{-i\xi(t^{-E}x)}e^{-P(\xi)}\,d\xi=\frac{1}{t^{\mu_{\Lambda}}}K_{\Lambda}^1(t^{-E}x)
\end{eqnarray*}
for $t>0$ and $x\in\mathbb{V}$. This computation immediately yields the so-called on-diagonal estimate for $K_{\Lambda}$,
\begin{equation*}
\|e^{-t\Lambda}\|_{1\to\infty}=\|K_{\Lambda}^t\|_{\infty}=\frac{1}{t^{\mu_{\Lambda}}}\|K_{\Lambda}^1\|_{\infty}\leq \frac{C}{t^{\mu_{\Lambda}}}
\end{equation*}
for $t>0$; this is equivalently a statement of ultracontractivity for the semigroup $e^{-t\Lambda}$. As it turns out, we can say something much stronger.

\begin{proposition}\label{prop:CCEstimates}
Let $\Lambda$ be a positive-homogeneous operator with symbol $P$ and homogeneous order $\mu_{\Lambda}$. Let $R^{\#}:\mathbb{V}\rightarrow\mathbb{R}$ be the Legendre-Fenchel transform of $R=\Re P$ defined by
\begin{equation*}
R^{\#}(x)=\sup_{\xi\in\mathbb{V}^*}\{\xi(x)-R(\xi)\}
\end{equation*}
for $x\in\mathbb{V}$. Also, let $\mathbf{v}$ and $\mathbf{m}\in\mathbb{N}_+^d$ be as guaranteed by Proposition \ref{prop:OperatorRepresentation}. Then, there exit positive constants $C_0$ and $M$ and, for each multi-index $\beta$, a positive constant $C_{\beta}$ such that, for all $k\in\mathbb{N}$,
\begin{equation}\label{eq:CCDerivativeEstimate}
\left|\partial_t^kD_{\mathbf{v}}^{\beta}K_{\Lambda}^t(x-y)\right|\leq \frac{C_{\beta}C_0^k k!}{t^{\mu_{\Lambda}+k+|\beta:2\mathbf{m}|}}\exp\left(-tMR^{\#}\left(\frac{x-y}{t}\right)\right)
\end{equation}
for all $x,y\in\mathbb{V}$ and $t>0$. In particular,
\begin{equation}\label{eq:CCEstimate}
\left|K_{\Lambda}^t(x-y)\right|\leq \frac{C_0}{t^{\mu_{\Lambda}}}\exp\left(-tMR^{\#}\left(\frac{x-y}{t}\right)\right)
\end{equation}
for all $x,y\in\mathbb{V}$ and $t>0$.
\end{proposition}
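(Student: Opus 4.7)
The plan is to prove \eqref{eq:CCDerivativeEstimate} by a contour-shift argument in the complexified dual space, combined with a polynomial comparison estimate and a homogeneity property of the Legendre-Fenchel transform. I would carry out the proof in four steps.

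\emph{Step 1 (Rescaling).} Differentiating under the integral (justified since $e^{-tR(\xi)}$ has superpolynomial decay by Lemma \ref{lem:PtoInfty}) yields
\[\partial_t^k D_{\mathbf{v}}^\beta K_\Lambda^t(x) = \int_{\mathbb{V}^*}(-P(\xi))^k\xi^\beta e^{-i\xi(x)}e^{-tP(\xi)}\,d\xi.\]
Taking $F = E^* \in \Exp(P)$ with $E$ the diagonalizable exponent of Proposition \ref{prop:OperatorRepresentation}, the substitution $\xi = t^{-F}\tilde\xi$ (using $P(t^{-F}\tilde\xi) = t^{-1}P(\tilde\xi)$, $\xi^\beta = t^{-|\beta:2\mathbf{m}|}\tilde\xi^\beta$ in the basis $\mathbf{v}^*$, and $d\xi = t^{-\mu_\Lambda}d\tilde\xi$) gives
\[\partial_t^k D_{\mathbf{v}}^\beta K_\Lambda^t(x) = (-1)^k\,t^{-(\mu_\Lambda+k+|\beta:2\mathbf{m}|)}\,J(t^{-E}x;k,\beta),\]
where $J(y;k,\beta) := \int P(\tilde\xi)^k\tilde\xi^\beta e^{-i\tilde\xi(y)}e^{-P(\tilde\xi)}\,d\tilde\xi$. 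This absorbs all the algebraic $t$-decay in front of the exponential; the remaining task is to bound $|J(y;k,\beta)|$ by $C_\beta C_0^k k!\exp(-R^\#(A^{-E}y))$ for some $A>0$, and to convert this to the asserted form.

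\emph{Step 2 (Contour shift and the key polynomial inequality).} Since $P$ is a polynomial, $e^{-P(\zeta)}$ is entire on the complexification $\mathbb{V}^*_\mathbb{C}$; because $\Re P(\tilde\xi-i\eta)$ still dominates as $|\tilde\xi|\to\infty$ for fixed $\eta$, Cauchy's theorem applied to a large box yields the shift
\[J(y;k,\beta) = e^{-\eta(y)}\int_{\mathbb{V}^*}P(\tilde\xi-i\eta)^k(\tilde\xi-i\eta)^\beta e^{-i\tilde\xi(y)}e^{-P(\tilde\xi-i\eta)}\,d\tilde\xi\]
for every $\eta\in\mathbb{V}^*$. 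The central technical estimate is that there exist $\theta\in(0,1)$ and $B>0$ with $\Re P(\tilde\xi-i\eta)\geq\theta R(\tilde\xi) - BR(\eta)$ for all $\tilde\xi,\eta$. To prove it, expand $P(\tilde\xi-i\eta)$ by the binomial theorem in the Proposition \ref{prop:OperatorRepresentation} coordinates: the pure $\tilde\xi^\alpha$ terms reconstruct $R(\tilde\xi)$, while each mixed monomial $\tilde\xi^\gamma\eta^{\alpha-\gamma}$ (with $0<\gamma<\alpha$) satisfies $|\tilde\xi^\gamma\eta^{\alpha-\gamma}|\leq CR(\tilde\xi)^{|\gamma:2\mathbf{m}|}R(\eta)^{|\alpha-\gamma:2\mathbf{m}|}$ by a scaling/compactness argument (the ratio is $t^F$-invariant and continuous on $\mathbb{V}^*\setminus\{0\}$, hence bounded on the compact level set $\{R=1\}$). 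Setting $\sigma+\tau = |\alpha:2\mathbf{m}|=1$, the weighted Young inequality $a^\sigma b^\tau\leq\epsilon a + C_\epsilon b$ absorbs each cross term into a small multiple of $R(\tilde\xi)$ plus a large multiple of $R(\eta)$; a uniform choice of $\epsilon$ over the finitely many monomials yields the claim.

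\emph{Step 3 (Integration and optimization).} Inserting the key inequality and using $|P(\tilde\xi-i\eta)|\leq C(R(\tilde\xi)+R(\eta))$ together with $|(\tilde\xi-i\eta)^\beta|\leq C_\beta(R(\tilde\xi)+R(\eta))^{|\beta:2\mathbf{m}|}$ (again by scaling/compactness), the $\tilde\xi$-integrals are controlled by $\int R(\tilde\xi)^j e^{-\theta R(\tilde\xi)}d\tilde\xi\leq C\theta^{-j}j!$, producing the $C_0^k k!$ factor. The $\eta$-dependent polynomial factors are absorbed into the exponential via $x^j e^{-x}\leq j!\,e^{-x/2}$, at the cost of replacing $B$ by $2B$. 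Optimizing over $\eta$ by the substitution $\eta = (2B)^{-F}\tilde\eta$ (so that $R((2B)^{-F}\tilde\eta) = (2B)^{-1}R(\tilde\eta)$ and $\eta(y) = \tilde\eta((2B)^{-E}y)$) gives $\sup_\eta\{\eta(y)-2BR(\eta)\} = R^\#((2B)^{-E}y)$, and hence $|J(y;k,\beta)|\leq C_\beta C_0^k k!\exp(-R^\#((2B)^{-E}y))$.

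\emph{Step 4 (Homogeneity of $R^\#$ and conclusion).} With $y=t^{-E}x$, I use the identity $R^\#(\lambda^{I-E}z) = \lambda R^\#(z)$ for all $\lambda>0$, which follows by substituting $\eta = \lambda^F\tilde\eta$ in the Legendre-Fenchel formula and computing $(\lambda^{(I-E)^*}\lambda^F\tilde\eta)(z) = \lambda\tilde\eta(z)$ together with $R(\lambda^F\tilde\eta) = \lambda R(\tilde\eta)$. Taking $\lambda=t$ and using that $(2B)^{-E}$ commutes with $t^{I-E}$ gives $R^\#((2Bt)^{-E}x) = tR^\#((2B)^{-E}(x/t))$. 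The quotient $R^\#((2B)^{-E}z)/R^\#(z)$ is $\lambda^{I-E}$-invariant (both numerator and denominator scale by $\lambda$), continuous, and positive on $\mathbb{V}\setminus\{0\}$, so it attains a minimum $M>0$ on the compact set $\{R^\#=1\}$; hence $R^\#((2B)^{-E}z)\geq MR^\#(z)$ for all $z\in\mathbb{V}$. Combining with Step 1 gives \eqref{eq:CCDerivativeEstimate}, from which \eqref{eq:CCEstimate} follows by taking $k=0,\beta=0$. The main obstacle is the key inequality of Step 2: ensuring that the weighted Young inequality can be applied with a single small $\epsilon$ across all mixed monomials of $P$ while keeping $\theta>0$.
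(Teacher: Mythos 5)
Your proposal is correct and follows essentially the same route the paper takes: the constant-coefficient estimate \eqref{eq:CCDerivativeEstimate} is obtained in the paper as an immediate corollary of Lemma~\ref{lem:LegendreEstimate}, whose proof uses the same $t^E$-rescaling, the same contour shift $\xi\mapsto\xi-i\eta$, the key comparison $\Re P(\xi-i\eta)\geq\theta R(\xi)-B R(\eta)$ (the paper's Lemma~\ref{lem:Scaling}, proved by the very scaling/compactness and weighted-Young argument you sketch), and the same optimization over $\eta$ giving $(cR)^{\#}$, converted to $MR^{\#}$ via Corollary~\ref{cor:MovingConstants}. The only cosmetic point worth noting is that your binomial expansion should also cover the pure $\eta^\alpha$ (\,$\gamma=0$\,) terms, which are handled by the same one-variable scaling bound $|\eta^\alpha|\leq CR(\eta)$ and therefore do not affect the argument.
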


\begin{remark}
In view of \eqref{eq:HomogeneousOrderExplicit}, the exponent on the prefactor in \eqref{eq:CCDerivativeEstimate} can be equivalently written, for any multi-index $\beta$ and $k\in\mathbb{N}$, as $\mu_{\Lambda}+k+|\beta:2\mathbf{m}|=k+|\mathbf{1}+\beta:2\mathbf{m}|=|\mathbf{1}+2k\mathbf{m}+\beta:2\mathbf{m}|$ where $\mathbf{1}=(1,1,\dots,1)$.
\end{remark}

\begin{remark}
We note that the estimates of Proposition \ref{prop:CCEstimates} are written in terms of the difference $x-y$ and can (trivially) be expressed in terms of a single spatial variable $x$. The estimates are written in this way to emphasize the role that $K$ plays as an integral kernel. We will later replace $\Lambda$ in \eqref{eq:HeatEquation} by a comparable variable-coefficient operator $H$ and, in that setting, the associated heat kernel is not a convolution kernel and so we seek estimates involving two spatial variables $x$ and $y$. To that end, the estimates here form a template for estimates in the variable-coefficient setting.
\end{remark}

\noindent We prove the proposition above in the Section \ref{sec:FundamentalSolution}; the remainder of this section is dedicated to discussing the result and connecting it to the existing theory. Let us first note that the estimate \eqref{eq:CCDerivativeEstimate} is mirrored by an analogous space-time estimate, Theorem 5.3 of \cite{Randles2015a}, for the convolution powers of complex-valued functions on $\mathbb{Z}^d$ satisfying certain conditions (see Section 5 of \cite{Randles2015a}). The relationship between these two results, Theorem 5.3 of \cite{Randles2015a} and Proposition \ref{prop:CCEstimates}, parallels the relationship between Gaussian off-diagonal estimates for random walks and the analogous off-diagonal estimates enjoyed by the classical heat kernel \cite{Hebisch1993}.\\

\noindent Let us first show that the estimates \eqref{eq:CCDerivativeEstimate} and \eqref{eq:CCEstimate} recapture the well-known estimates of the theory of parabolic equations and systems in $\mathbb{R}^d$ -- a theory in which the Laplacian operator $\Delta=\sum_{l=1}^d\partial_{x_l}^2$ and its integer powers play a central role. To place things into the context of this article, let us observe that, for each positive integer $m$, the partial differential operator $(-\Delta)^m$ is a positive-homogeneous operator on $\mathbb{R}^d$ with symbol $P(\xi)=|\xi|^{2m}$; here, we identify $\mathbb{R}^d$ as its own dual equipped with the dot product and Euclidean norm $|\cdot|$. Indeed, one easily observes that $P=|\cdot|^{2m}$ is a positive-definite polynomial and $E=(2m)^{-1}I\in\Exp((-\Delta)^m)$ where $I\in\Gl(\mathbb{R}^d)$ is the the identity. Consequently, the homogeneous order of $(-\Delta)^m$ is $d/2m=(2m)^{-1}\tr(I)$ and the Legendre-Fenchel transform of $R=\Re P=|\cdot|^{2m}$ is easily computed to be $R^{\#}(x)=C_m|x|^{2m/(2m-1)}$ where $C_m=(2m)^{1/(2m-1)}-(2m)^{-2m/(2m-1)}>0$. Hence, \eqref{eq:CCEstimate} is the well-known estimate
\begin{equation*}
\left|K_{(-\Delta)^m}^t(x-y)\right|\leq \frac{C_0}{t^{d/2m}}\exp\left(-M\frac{|x-y|^{2m/(2m-1)}}{t^{1/(2m-1)}}\right)
\end{equation*}
for $x,y\in\mathbb{R}^d$ and $t>0$; this so-called off-diagonal estimate is ubiquitous to the theory of ``higher-order" elliptic and parabolic equations \cite{Friedman1964, Eidelman1969, Robinson1991a, Davies1997}. To write the derivative estimate \eqref{eq:CCDerivativeEstimate} in this context, we first observe that the basis given by Proposition \ref{prop:OperatorRepresentation} can be taken to be the standard Euclidean basis, $\mathbf{e}=\{e_1,e_2,\dots,e_d\}$ and further, $\mathbf{m}=(m,m,\dots,m)$ is the (isotropic) weight given by the proposition. Writing $D^{\beta}=D^{\beta}_{\mathbf{e}}=(i\partial_{x_1})^{\beta_1}(i\partial_{x_2})^{\beta_2}\cdots(i\partial_{x_d})^{\beta_d}$ and $|\beta|=\beta_1+\beta_2+ \cdots + \beta_d$ for each multi-index $\beta$, \eqref{eq:CCDerivativeEstimate} takes the form
\begin{equation*}
\left|\partial_t^kD^{\beta}K_{(-\Delta)^m}^t(x-y)\right|\leq \frac{C_0}{t^{(d+|\beta|)/2m+k}}\exp\left(-M\frac{|x-y|^{2m/(2m-1)}}{t^{1/(2m-1)}}\right)
\end{equation*}
for $x,y\in\mathbb{R}^d$ and $t>0$, c.f., \cite[Property 4, p. 93]{Eidelman1969}.\\

\noindent The appearance of the $1$-dimensional Legendre-Fenchel transform in heat kernel estimates was previously recognized and exploited in \cite{Barbatis1996} and \cite{Blunck2005} in the context of elliptic operators. Due to the isotropic nature of elliptic operators, the $1$-dimensional transform is sufficient to capture the inherent isotropic decay of corresponding heat kernels. Beyond the elliptic theory, the appearance of the full $d$-dimensional Legendre-Fenchel transform is remarkable because it sharply captures the general anisotropic decay of $K_{\Lambda}$. Consider, for instance, the particularly simple positive-homogeneous operator $\Lambda=-\partial_{x_1}^6+\partial_{x_2}^8$ on $\mathbb{R}^2$ with symbol $P(\xi_1,\xi_2)=\xi_1^6+\xi_2^8$. It is easily checked that the operator $E$ with matrix representation $\diag(1/6,1/8)$, in the standard Euclidean basis, is a member of the $\Exp(\Lambda)$ and so the homogeneous order of $\Lambda$ is $\mu_{\Lambda}=\tr(\diag(1/6,1/8))=7/24$. Here we can compute the Legendre-Fenchel transform of $R=\Re P=P$ directly to obtain $R^{\#}(x_1,x_2)=c_1|x_1|^{6/5}+c_2|x_2|^{8/7}$ for $(x_1,x_2)\in\mathbb{R}^2$ where $c_1$ and $c_2$ are positive constants. In this case, Proposition \ref{prop:CCEstimates} gives positive constants $C_0$ and $M$ for which
\begin{equation}\label{eq:SeparableExample}
|K_{\Lambda}^t(x_1-y_1,x_2-y_2)|\leq\frac{C_0}{t^{7/24}}\exp\left(- \left(M_1\frac{|x_1-y_1|^{6/5}}{t^{1/5}}+M_2\frac{|x_2-y_2|^{8/7}}{t^{1/7}}\right)\right)
\end{equation}
for $(x_1,x_2),(y_1,y_2)\in\mathbb{R}^2$ and $t>0$ where $M_1=c_1M$ and $M_2=c_2M$. We note however that $\Lambda$ is ``separable'' and so we can write $K_{\Lambda}^t(x_1,x_2)=K_{(-\Delta)^3}^t(x_1)K_{(-\Delta)^4}^t(x_2)$ where $\Delta$ is the $1$-dimensional Laplacian operator. In view of Theorem 8 of \cite{Barbatis1996} and its subsequent remark, the estimate \eqref{eq:SeparableExample} is seen to be sharp (modulo the values of $M_1,M_2$ and $C$). To further illustrate the proposition for a less simple positive-homogeneous operator, we consider the operator $\Lambda$ appearing in Example \ref{ex:intro3}. In this case, 
\begin{equation*}
R(\xi_1,\xi_2)=P(\xi_1,\xi_2)=\frac{1}{8}(\xi_1+\xi_2)^2+\frac{23}{384}(\xi_1-\xi_2)^4
\end{equation*}
and one can verify directly that the $E\in\End(\mathbb{R}^2)$, with matrix representation  
\begin{equation*}
E_{\mathbf{e}}=
\begin{pmatrix}
3/8 & 1/8\\
1/8 & 3/8
\end{pmatrix}
\end{equation*}
in the standard Euclidean basis, is a member of $\Exp(\Lambda)$. From this, we immediately obtain $\mu_{\Lambda}=\tr(E)=3/4$ and one can directly compute
\begin{equation*}
R^{\#}(x_1,x_2)=c_1|x_1+x_2|^2+c_2|x_1-x_2|^{4/3}
\end{equation*}
for $(x_1,x_2)\in\mathbb{R}^2$ where $c_1$ and $c_2$ are positive constants.
An appeal to Proposition \ref{prop:CCEstimates} gives positive constants $C_0$ and $M$ for which
\begin{equation*}
|K^t_{\Lambda}(x_1-y_1,x_2-y_2)|\leq \frac{C_0}{t^{3/4}}\exp\left(-\left(M_1\frac{|(x_1-y_1)+(x_2-y_2)|^2}{t}+M_2\frac{|(x_1-y_1)-(x_2-y_2)|^{4/3}}{t^{1/3}}\right)\right)
\end{equation*}
for $(x_1,x_2),(y_1,y_2)\in\mathbb{R}^2$ and $t>0$ where $M_1=c_1M$ and $M_2=c_2M$. Furthermore, $\mathbf{m}=(1,2)\in\mathbb{N}_+^2$ and the basis $\mathbf{v}=\{v_1,v_2\}$ of $\mathbb{R}^2$ given in discussion surrounding \eqref{eq:DiagonalizedExample} are precisely those guaranteed by Proposition \ref{prop:OperatorRepresentation}. Appealing to the full strength of Proposition \ref{prop:CCEstimates}, we obtain positive constants $C_0 $, $M$ and, for each multi-index $\beta$, a positive constant $C_\beta$ such that, for each $k\in\mathbb{N}$,
\begin{eqnarray*}
\lefteqn{\left|\partial_t^kD_{\mathbf{v}}^\beta K_{\Lambda}(x_1-y_1,x_2-y_2)\right|}\\
&\leq&\frac{C_\beta C_0^k k!}{t^{3/4+k+|\beta:2\mathbf{m}|}}\exp\left(-\left(M_1\frac{|(x_1-y_1)+(x_2-y_2)|^2}{t}+M_2\frac{|(x_1-y_1)-(x_2-y_2)|^{4/3}}{t^{1/3}}\right)\right)
\end{eqnarray*}
for $(x_1,x_2),(y_1,y_2)\in\mathbb{R}^2$ and $t>0$ where $M_1=c_1M$ and $M_2=c_2M$.\\

\noindent In the context of homogeneous groups, the off-diagonal behavior for the heat kernel of a positive Rockland operator (a positive self-adjoint operator which is homogeneous with respect to the fixed dilation structure) has been studied in \cite{Hebisch1989,Dziubanski1989,Auscher1994}  (see also \cite{Artino1995}). Given a positive Rockland operator $\Lambda$ on homogeneous group $G$, the best known estimate for the heat kernel $K_{\Lambda}$, due to Auscher, ter Elst and Robinson, is of the form 
 \begin{equation}\label{eq:HebischEst}
 |K_{\Lambda}^t(h^{-1}g)|\leq\frac{C_0}{t^{\mu_{\Lambda}}}\exp\left(-M\left(\frac{\|h^{-1}g\|^{2m}}{t}\right)^{1/(2m-1)}\right)
 \end{equation}
 where $\|\cdot\|$ is a homogeneous norm on $G$ (consistent with $\Lambda)$ and $2m$ is the highest order derivative appearing in $\Lambda$. In the context of $\mathbb{R}^d$, given a symmetric and positive-homogeneous operator $\Lambda$ with symbol $P$, the structure $G_D=(\mathbb{R}^d,\{\delta_t^D\})$ for $D=2mE$ where $E\in\Exp(\Lambda)$ is a homogeneous group on which $\Lambda$ becomes a positive Rockland operator. On $G_D$, it is quickly verified that $\|\cdot\|=R(\cdot)^{1/2m}$ is a homogeneous norm (consistent with $\Lambda$) and so the above estimate is given in terms of $R(\cdot)^{1/(2m-1)}$ which is, in general, dominated by the Legendre-Fenchel transform of $R$. To see this, we need not look further than our previous and  simple example in which $\Lambda= -\partial_{x_1}^6+ \partial_{x_2}^8$. Here $2m=8$ and so $R(x_1,x_2)^{1/(2m-1)}=(|x_1|^6+|x_2|^8)^{1/7}$. In view of \eqref{eq:SeparableExample}, the estimate \eqref{eq:HebischEst} gives the correct decay along the $x_2$-coordinate axis; however, the bounds decay at markedly different rates along the $x_1$-coordinate axis. This illustrates that the estimate \eqref{eq:HebischEst} is suboptimal, at least in the context of $\mathbb{R}^d$, and thus leads to the natural question: For positive-homogeneous operators on a general homogeneous group $G$, what is to replace the Legendre-Fenchel transform in heat kernel estimates?\\
 
\noindent Returning to the general picture, let $\Lambda$ be a positive-homogeneous operator on $\mathbb{V}$ with symbol $P$ and homogeneous order $\mu_{\Lambda}$. To highlight some remarkable properties about the estimates \eqref{eq:CCDerivativeEstimate} and \eqref{eq:CCEstimate} in this general setting, the following proposition concerning $R^{\#}$ is useful; for a proof, see Section 8.3 of \cite{Randles2015a}.
\begin{proposition}\label{prop:LegendreTransformProperties}
Let $\Lambda$ be a positive-homogeneous operator with symbol $P$ and let $R^{\#}$ be the Legendre-Fenchel transform of $R=\Re P$. Then, for any $E\in\Exp(\Lambda)$, $I-E\in\Exp(R^{\#})$. Moreover $R^{\#}$ is continuous, positive-definite in the sense that $R^{\#}(x)\geq 0$ and $R^{\#}(x)=0$ only when $x=0$. Further, $R^{\#}$ grows superlinearly in the sense that, for any norm $|\cdot |$ on $\mathbb{V}$,
\begin{equation*}
\lim_{x\to\infty}\frac{|x|}{R^{\#}(x)}=0;
\end{equation*}
in particular, $R^{\#}(x)\rightarrow\infty$ as $x\rightarrow\infty$.
\end{proposition}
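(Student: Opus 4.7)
The plan is to verify the four assertions in turn. The first (homogeneity) follows from direct substitution; the second and third (continuity, positive-definiteness) from standard convex-analytic considerations together with the polynomial structure of $R$; and the fourth (superlinear growth) from the homogeneity combined with a compactness argument on an auxiliary gauge. For the homogeneity identity, fix $E \in \Exp(\Lambda)$ so that $E^* \in \Exp(P)$ and hence $R(t^{E^*}\eta) = tR(\eta)$ (since $R = \Re P$). In the supremum $R^{\#}(t^{I-E}x) = \sup_{\xi} \{\xi(t^{I-E}x) - R(\xi)\}$, I substitute $\xi = t^{E^*}\eta$; using $\xi(t^{I-E}x) = (t^{I - E^*}\xi)(x) = t(t^{-E^*}\xi)(x) = t\eta(x)$ together with $R(\xi) = tR(\eta)$ and factoring out $t$ yields $R^{\#}(t^{I-E}x) = tR^{\#}(x)$.

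For positive-definiteness, taking $\xi = 0$ in the sup gives $R^{\#}(x) \geq -R(0) = 0$. For the strict inequality at $x \neq 0$, I observe via Lemma \ref{lem:PolynomialRepresentation} applied to $R$ that every monomial of $R$ has total degree at least $2$ (the constraint $|\beta:\mathbf{m}|=2$ with each $m_k \geq 1$ forces $\sum_k \beta_k \geq 2$), so $R(\epsilon \xi_0) = O(\epsilon^2)$ for any fixed $\xi_0$; choosing $\xi_0 \in \mathbb{V}^*$ with $\xi_0(x) > 0$ then makes $\epsilon\xi_0(x) - R(\epsilon \xi_0) > 0$ for all sufficiently small $\epsilon > 0$. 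For continuity, the same degree-$\geq 2$ structure together with the homogeneity of $R$ and compactness of an $E^*$-homogeneous unit sphere (in the spirit of Lemma \ref{lem:PtoInfty}) yields a quadratic lower bound $R(\xi) \gtrsim |\xi|^2$ for large $|\xi|$, so $\xi(x) - R(\xi) \to -\infty$ and the sup is finite. As a supremum of affine functionals, $R^{\#}$ is convex, and a finite-valued convex function on a finite-dimensional real vector space is automatically continuous.

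For the superlinear growth I combine the homogeneity identity with a compactness argument. Using Proposition \ref{prop:OperatorRepresentation}, pick a basis $\mathbf{v} = \{v_1,\dots,v_d\}$ of $\mathbb{V}$ and $\mathbf{m} \in \mathbb{N}_+^d$ so that $E := E_{\mathbf{v}}^{2\mathbf{m}} \in \Exp(\Lambda)$, and for $x = \sum_k x_k v_k$ set
\begin{equation*}
\|x\| := \sum_{k=1}^d |x_k|^{2m_k/(2m_k - 1)}.
\end{equation*}
A direct check gives $\|t^{I-E}x\| = t\|x\|$, and the set $S' = \{y \in \mathbb{V}: \|y\| = 1\}$ is compact. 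By the previous steps, $R^{\#}$ is continuous and strictly positive on $S'$, so $c := \inf_{S'} R^{\#} > 0$. For any $x \neq 0$, writing $x = \|x\|^{I-E} y$ with $y \in S'$, the homogeneity of $R^{\#}$ gives $R^{\#}(x) = \|x\| R^{\#}(y) \geq c\|x\|$. Finally, since every exponent $2m_k/(2m_k - 1)$ strictly exceeds $1$, the gauge $\|x\|$ dominates any norm $|x|$ on $\mathbb{V}$ superlinearly as $|x| \to \infty$, yielding $|x|/R^{\#}(x) \to 0$. The main obstacle is this superlinear-growth step: it hinges on identifying the correct auxiliary gauge matched to $I - E$ (rather than $E$ itself) and on the crucial inequality $2m_k/(2m_k - 1) > 1$, which holds precisely because each $m_k \geq 1$. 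The finiteness portion of the continuity step is the next most delicate piece, as it requires a quantitative lower bound on $R$ distilled from its degree structure and homogeneity.
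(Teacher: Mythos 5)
The paper does not actually prove this proposition---immediately after the statement it writes ``for a proof, see Section 8.3 of \cite{Randles2015a}''---so there is no in-text argument to compare against. Your proof is correct and self-contained, and it uses essentially the same toolkit the paper develops elsewhere (the representation in Proposition~\ref{prop:OperatorRepresentation}, the contracting-group/compactness device of Proposition~\ref{prop:ComparePoly}, and the gauge $|\cdot|_{\mathbf{v}}^{\omega}$). Each step checks out: the substitution $\xi = t^{E^*}\eta$ together with the commutation $t^{I-E^*} = t\,t^{-E^*}$ gives the homogeneity $I-E\in\Exp(R^{\#})$; the observation that $|\beta:\mathbf{m}|=2$ with $m_k\ge 1$ forces $|\beta|\ge 2$, paired with the test vector $\epsilon\xi_0$, gives strict positivity off the origin; finiteness follows from the orbit parametrization (every direction grows at least like $|\xi|^{2m_{\min}}\ge|\xi|^2$), and convexity then gives continuity; and the gauge $\|x\|=\sum_k|x_k|^{2m_k/(2m_k-1)}$ is homogeneous under $\{t^{I-E}\}$, so compactness of its unit sphere yields $R^{\#}\ge c\|\cdot\|$, which dominates every norm superlinearly because each exponent $2m_k/(2m_k-1)>1$. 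One structural remark: the inequality $R^{\#}\gtrsim|\cdot|_{\mathbf{v}}^{\omega}$ you establish is one half of the paper's Corollary~\ref{cor:LegendreCompareHomogeneousNorm}; since that corollary's proof cites the present proposition, you were right to rederive it directly via compactness rather than invoke it, avoiding circularity.
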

\noindent Let us first note that, in view of the proposition, we can easily rewrite \eqref{eq:CCEstimate}, for any $E\in \Exp(\Lambda)$, as
\begin{equation*}
\left| K_{\Lambda}^t(x-y)\right|\leq \frac{C_0}{t^{\mu_{\Lambda}}}\exp\left(-MR^{\#}\left(t^{-E}(x-y)\right)\right)
\end{equation*}
for $x,y\in\mathbb{V}$ and $t>0$; the analogous rewriting is true for \eqref{eq:CCDerivativeEstimate}. The fact that $R^{\#}$ is positive-definite and grows superlinearly ensures that the convolution operator $e^{-t\Lambda}$ defined by \eqref{eq:ConvolutionSemigroupDefinition} for $t>0$ is a bounded operator from $L^p$ to $L^q$ for any $1\leq p,q\leq \infty$. Of course, we already knew this because $K_{\Lambda}^t$ is a Schwartz function; however, when replacing $\Lambda$ with a variable-coefficient operator $H$, as we will do in the sections to follow, the validity of the estimate \eqref{eq:CCEstimate} for the kernel of the semigroup $\{e^{-tH}\}$ initially defined on $L^2$,  guarantees that the semigroup extends to a strongly continuous semigroup $\{e^{-tH_p}\}$ on $L^p(\mathbb{R}^d)$ for all $1\leq p\leq \infty$ and, what's more, the respective infinitesimal generators $-H_p$ have spectra independent of $p$ \cite{Davies1995}. Further, the estimate \eqref{eq:CCEstimate} is key to establishing the boundedness of the Riesz transform, it is connected to the resolution of Kato's square root problem and it provides the appropriate starting point for uniqueness classes of solutions to $\partial_t+H=0$ \cite{Auscher2001,Ouhabaz2009}. With this motivation in mind, following some background in Section \ref{sec:Holder}, we introduce a class of variable-coefficient operators in Section \ref{sec:UniformlySemiElliptic} called $(2\mathbf{m,v})$-positive-semi-elliptic operators, each such operator $H$  comparable to a fixed positive-homogeneous operator. In Section \ref{sec:FundamentalSolution}, under the assumption that $H$ has H\"{o}lder continuous coefficients and this notion of comparability is uniform,  we construct a fundamental solution to the heat equation $\partial_t+H=0$ and show the essential role played by the Legendre-Fenchel transform in this construction. As mentioned previously, in a forthcoming work we will study the semigroup $\{e^{-tH}\}$ where $H$ is a divergence-form operator, which is comparable to a fixed positive-homogeneous operator, whose coefficients are at worst measurable. As the Legendre-Fenchel transform appears here by a complex change of variables followed by a minimization argument, in the measurable coefficient setting it appears quite naturally by an application of the so-called Davies' method, suitably adapted to the positive-homogeneous setting.

\section{Contracting groups, H\"{o}lder continuity and the Legendre-Fenchel transform}\label{sec:Holder}

\noindent In this section, we provide the necessary background on one-parameter contracting groups, anisotropic H\"{o}lder continuity, and the Legendre-Fenchel transform and its interplay with the two previous notions.

\subsection{One-parameter contracting groups}

\noindent In what follows, $W$ is a $d$-dimensional real vector space with a norm $|\cdot|$; the corresponding operator norm on $\Gl(W)$ is denoted by $\|\cdot\|$. Of course, since everything is finite-dimensional, the usual topologies on $W$ and $\Gl(W)$ are insensitive to the specific choice of norms. 
 
\begin{definition}
Let $\{T_t\}_{t>0}\subseteq \Gl(W)$ be a continuous one-parameter group. $\{T_t\}$ is said to be contracting if
\begin{equation*}
\lim_{t\rightarrow 0}\|T_t\|=0.
\end{equation*}
\end{definition}
\noindent We easily observe that, for any diagonalizable $E\in\End(W)$ with strictly positive spectrum, the corresponding one-parameter group $\{t^E\}_{t>0}$ is contracting. Indeed, if there exists a basis $\mathbf{w}=\{w_1,w_2,\dots,w_d\}$ of $W$ and a collection of positive numbers $\lambda_1,\lambda_2,\dots,\lambda_d$ for which $Ew_k=\lambda_kw_k$ for $k=1,2,\dots,d$, then the one parameter group $\{t^E\}_{t>0}$ has $t^{E}w_k=t^{\lambda_k}w_k$ for $k=1,2,\dots,d$ and $t>0$. It then follows immediately that $\{t^E\}$ is contracting. 

\begin{proposition}\label{prop:ComparePoly}
Let $Q$ and $R$ be continuous real-valued functions on $W$. If $R(w)>0$ for all $w\neq 0$ and there exists $E\in\Exp(Q)\cap\Exp(R)$ for which $\{t^E\}$ is contracting, then, for some positive constant $C$, $Q(w)\leq C R(w)$ for all $w\in W$. If additionally $Q(w)>0$ for all $w\neq 0$, then $Q\asymp R$.
\end{proposition}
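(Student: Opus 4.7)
The plan is to use the homogeneity under $E$ to transport the asserted pointwise inequality from all of $W$ onto a compact set, where continuity and positivity of $R$ deliver a constant.

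First I would record what $E\in\Exp(Q)\cap\Exp(R)$ means in functional form: $Q(t^Ew)=tQ(w)$ and $R(t^Ew)=tR(w)$ for every $t>0$ and $w\in W$. Fixing any $w$ and letting $t\to 0^+$, the contracting property $\|t^E\|\to 0$ forces $t^Ew\to 0$, so continuity of $Q$ and $R$ yields $Q(0)=R(0)=0$; in particular the claimed inequality holds trivially at $w=0$ and we may restrict to $w\neq 0$.

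Next, fix a norm $|\cdot|$ on $W$ with unit sphere $S=\{v\in W:|v|=1\}$. I would argue that every nonzero $w$ is of the form $w=t^Ev$ for some $t>0$ and $v\in S$. To see this, consider the continuous map $\phi_w:(0,\infty)\to(0,\infty)$ given by $\phi_w(t)=|t^{-E}w|$. On the one hand, the contracting property applied to $\{s^E\}_{s=1/t}$ gives $\|t^{-E}\|\to 0$ as $t\to\infty$, hence $\phi_w(t)\le \|t^{-E}\|\,|w|\to 0$. On the other hand, from $w=t^E(t^{-E}w)$ we obtain $|w|\le\|t^E\|\,\phi_w(t)$, and as $t\to 0^+$ we have $\|t^E\|\to 0$, so $\phi_w(t)\to\infty$. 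The intermediate value theorem then supplies some $t_w>0$ with $\phi_w(t_w)=1$, so that $v:=t_w^{-E}w\in S$.

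Now I would exploit this decomposition together with homogeneity. Because $R>0$ on $W\setminus\{0\}\supseteq S$, the ratio $f(v):=Q(v)/R(v)$ is continuous on the compact set $S$, so it attains a maximum $C$. For any $w\neq 0$, writing $w=t_w^Ev$ as above,
\begin{equation*}
\frac{Q(w)}{R(w)}=\frac{Q(t_w^Ev)}{R(t_w^Ev)}=\frac{t_wQ(v)}{t_wR(v)}=f(v)\le C,
\end{equation*}
and since $R(w)>0$ this rearranges to $Q(w)\le CR(w)$, which also holds at $w=0$. If in addition $Q(w)>0$ for all $w\neq 0$, then the roles of $Q$ and $R$ are symmetric in the hypotheses, so the same argument with $Q$ and $R$ interchanged yields a constant $C'$ with $R\le C'Q$ on $W$, i.e., $Q\asymp R$.

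The main technical point is the orbit-intersects-sphere step; everything else is standard compactness-and-continuity. That step is where the contracting hypothesis does essential work: it both drives $|t^{-E}w|$ to zero as $t\to\infty$ and, via the identity $w=t^E(t^{-E}w)$, forces $|t^{-E}w|$ to blow up as $t\to 0^+$, which is precisely what the IVT needs. Without contracting one could not guarantee that orbits sweep out all scales of the norm, and the reduction to $S$ would fail.
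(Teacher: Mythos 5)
Your proof is correct and follows essentially the same approach as the paper: reduce to the unit sphere via the homogeneity under $\{t^E\}$, invoke the intermediate value theorem to see that every nonzero orbit meets the sphere, and use compactness of $S$ together with $R>0$ on $S$ to extract the constant. You supply more detail than the paper on the IVT step (explicitly verifying that $t\mapsto|t^{-E}w|$ tends to $0$ and $\infty$ at the two ends) and explicitly note $Q(0)=R(0)=0$, both of which are fine; otherwise the argument is the same, including the symmetric role-reversal for the $Q\asymp R$ conclusion.
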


\begin{proof}
Let $S$ denote the unit sphere in $W$ and observe that
\begin{equation*}
\sup_{w\in S}\frac{Q(w)}{R(w)}=:C<\infty
\end{equation*}
because $Q$ and $R$ are continuous and $R$ is non-zero on $S$. Now, for any non-zero $w\in W$, the fact that $t^E$ is contracting implies that $t^Ew\in S$ for some $t>0$ by virtue of the intermediate value theorem. Therefore, $Q(w)=Q(t^Ew)/t\leq CR(t^E w)/t=CR(w)$. In view of the continuity of $Q$ and $R$, this inequality must hold for all $w\in W$. When additionally $Q(w)>0$ for all non-zero $w$, the conclusion that $Q\asymp R$ is obtained by reversing the roles of $Q$ and $R$ in the preceding argument. 
\end{proof}

\begin{corollary}\label{cor:MovingConstants} Let $\Lambda$ be a positive-homogeneous operator on $\mathbb{V}$ with symbol $P$ and let $R^{\#}$ be the Legendre-Fenchel transform of $R=\Re P$. Then, for any positive constant $M$, $R^{\#}\asymp (MR)^{\#}$.
\end{corollary}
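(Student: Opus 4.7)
The plan is to apply Proposition \ref{prop:ComparePoly} with the two functions $R^{\#}$ and $(MR)^{\#}$, so the bulk of the work is to verify the hypotheses (continuity, strict positivity away from the origin, and existence of a common exponent generating a contracting one-parameter group). Proposition \ref{prop:LegendreTransformProperties} will supply most of these ingredients for $R^{\#}$, and the key observation is that the same proposition applies directly to $(MR)^{\#}$ as well.

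First I would note that $M\Lambda$ is itself a positive-homogeneous operator (scaling by a positive constant preserves both the positive-definiteness of the symbol and the exponent set, so $\Exp(M\Lambda)=\Exp(\Lambda)$), and its symbol is $MP$ with real part $MR$. Thus Proposition \ref{prop:LegendreTransformProperties}, applied to both $\Lambda$ and $M\Lambda$, yields: for any $E\in\Exp(\Lambda)$, one has $I-E\in\Exp(R^{\#})\cap\Exp((MR)^{\#})$, and both $R^{\#}$ and $(MR)^{\#}$ are continuous and strictly positive away from $0$.

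Next I would arrange that $\{t^{I-E}\}$ is contracting, by choosing $E$ carefully. Proposition \ref{prop:OperatorRepresentation} guarantees the existence of a basis $\mathbf{v}=\{v_1,\dots,v_d\}$ and $\mathbf{m}=(m_1,\dots,m_d)\in\mathbb{N}_+^d$ such that $E=E_{\mathbf{v}}^{2\mathbf{m}}\in\Exp(\Lambda)$; this $E$ is diagonalizable with eigenvalues $1/(2m_k)$. Consequently $I-E$ is diagonalizable with eigenvalues $1-1/(2m_k)\geq 1/2>0$, so (by the observation immediately preceding Proposition \ref{prop:ComparePoly}) the one-parameter group $\{t^{I-E}\}$ is contracting.

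With these facts in hand, Proposition \ref{prop:ComparePoly} applied with this common exponent $I-E$ gives $R^{\#}\leq C\,(MR)^{\#}$, and the reverse inequality follows by symmetry (swapping the roles of $R$ and $MR$, using that $(1/M)\cdot MR = R$ fits the same framework), yielding $R^{\#}\asymp (MR)^{\#}$. The proof is essentially a bookkeeping exercise; the only subtlety is noticing that multiplication by $M$ does not disturb the hypotheses of Proposition \ref{prop:LegendreTransformProperties}, so no new work is needed beyond selecting an exponent with strictly positive spectrum, which Proposition \ref{prop:OperatorRepresentation} hands us directly.
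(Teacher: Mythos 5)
Your proof is correct and follows the same route as the paper: appeal to Proposition \ref{prop:OperatorRepresentation} to obtain $E=E_{\mathbf{v}}^{2\mathbf{m}}\in\Exp(\Lambda)$, invoke Proposition \ref{prop:LegendreTransformProperties} to get that $R^{\#}$ and $(MR)^{\#}$ are continuous, positive-definite, and share the exponent $I-E$ which generates a contracting group, then finish with Proposition \ref{prop:ComparePoly}. The only cosmetic difference is that you make explicit the observation that $M\Lambda$ is positive-homogeneous with the same exponent set (which the paper leaves implicit), and you derive one inequality plus a symmetry argument rather than invoking the two-sided conclusion of Proposition \ref{prop:ComparePoly} directly.
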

\begin{proof}
By virtue of Proposition \ref{prop:OperatorRepresentation}, let $\mathbf{m}\in\mathbb{N}_+^d$ and $\mathbf{v}$ be a basis for $\mathbb{V}$ and for which $E_{\mathbf{v}}^{2\mathbf{m}}\in \Exp(\Lambda)$. In view of Proposition \ref{prop:LegendreTransformProperties}, $R^{\#}$ and $(MR)^{\#}$ are both continuous, positive-definite and have $I-E_{\mathbf{v}}^{2\mathbf{m}}\in \Exp(R^{\#})\cap \Exp((MR)^{\#})$. In view of \eqref{eq:DefofE}, it is easily verified that $I-E_{\mathbf{v}}^{2\mathbf{m}}=E_\mathbf{v}^\omega$ where
\begin{equation}\label{eq:DefOfOmega}
\omega:=\left(\frac{2m_1}{2m_1-1},\frac{2m_2}{2m_2-1},\dots\frac{2m_d}{2m_d-1}\right)\in \mathbb{R}_+^d
\end{equation}
and so it follows that $\{t^{E_{\mathbf{v}}^{\omega}}\}$ is contracting. The corollary now follows directly from Proposition \ref{prop:ComparePoly}.
\end{proof}

\begin{lemma}\label{lem:Scaling}
Let $P$ be a positive-homogeneous polynomial on $W$ and let $\mathbf{n}=2\mathbf{m}\in\mathbb{N}_+^d$ and $\mathbf{w}$ be a basis for $W$ for which the conclusion of Lemma \ref{lem:PolynomialRepresentation} holds. Let $R=\Re P$ and let $\beta$ and $\gamma$ be multi-indices such that $\beta\leq\gamma$ (in the standard partial ordering of multi-indices); we shall assume the notation of Lemma \ref{lem:PolynomialRepresentation}.
\begin{enumerate}
\item\label{item:Scaling1} For any $n\in\mathbb{N}_+$ such that $|\beta:\mathbf{m}|\leq 2n$, there exist positive constants $M$ and $M'$ for which
\begin{equation*}
|\xi^{\gamma}\nu^{\beta-\gamma}|\leq M(R(\xi)+R(\nu))^n+M'
\end{equation*}
for all $\xi,\nu\in W$.
\item\label{item:Scaling2} If $|\beta:\mathbf{m}|=2$, there exist positive constants $M$ and $M'$ for which
\begin{equation*}
|\xi^{\gamma}\nu^{\beta-\gamma}|\leq M R(\xi)+M'R(\nu)
\end{equation*}
for all $\nu,\xi\in W$.

\item\label{item:Scaling3} If $|\beta:\mathbf{m}|=2$ and $\beta>\gamma$, then for every $\epsilon>0$ there exists a positive constant $M$ for which
\begin{equation*}
|\xi^{\gamma}\nu^{\beta-\gamma}|\leq \epsilon R(\xi)+MR(\nu)
\end{equation*}
for all $\nu,\xi\in W$.
\end{enumerate}
\end{lemma}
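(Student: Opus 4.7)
The plan is to combine Proposition \ref{prop:ComparePoly} with Young's inequality. The essential preliminary step is this: for any multi-index $\alpha$ with $a:=|\alpha:2\mathbf{m}|>0$, the function $\xi\mapsto|\xi^\alpha|^{1/a}$ is continuous on $W$ and, under the scaling $\xi\mapsto t^{E_{\mathbf{w}}^{2\mathbf{m}}}\xi$, transforms by the factor $t$, exactly as $R$ does. Indeed $(t^E\xi)^\alpha=t^{|\alpha:2\mathbf{m}|}\xi^\alpha=t^a\xi^\alpha$ in the coordinates supplied by $\mathbf{w}$, while $R(t^E\xi)=tR(\xi)$ because $E=E_{\mathbf{w}}^{2\mathbf{m}}\in\Exp(P)$ by Lemma~\ref{lem:PolynomialRepresentation}. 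Since $\{t^{E}\}$ is contracting (its eigenvalues $1/(2m_k)$ are positive) and $R$ is positive-definite, Proposition~\ref{prop:ComparePoly} yields a constant $C_\alpha$ with $|\xi^\alpha|^{1/a}\leq C_\alpha R(\xi)$, i.e.\ $|\xi^\alpha|\leq C_\alpha^a\, R(\xi)^a$. With the convention $R(\xi)^0=1$, this also covers the trivial case $\alpha=0$.

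Set $c:=|\gamma:2\mathbf{m}|$ and $d:=|\beta-\gamma:2\mathbf{m}|$, so that $c+d=|\beta:2\mathbf{m}|$. For part~\eqref{item:Scaling1}, simply multiply the two preliminary bounds to get
\begin{equation*}
|\xi^\gamma\nu^{\beta-\gamma}|\leq C R(\xi)^c R(\nu)^d\leq C\bigl(R(\xi)+R(\nu)\bigr)^{c+d}.
\end{equation*}
Since $c+d\leq n$, the elementary inequality $s^a\leq s^n+1$ (valid for $s\geq 0$ and $0\leq a\leq n$, as one sees by splitting the cases $s\leq 1$ and $s>1$) delivers the claim. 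For part~\eqref{item:Scaling2} we have $c+d=1$; if either exponent vanishes the preliminary bound is already what is needed, while in the nondegenerate case $c,d>0$, Young's inequality with conjugate exponents $(1/c,1/d)$ yields
\begin{equation*}
|\xi^\gamma\nu^{\beta-\gamma}|\leq c|\xi^\gamma|^{1/c}+d|\nu^{\beta-\gamma}|^{1/d}\leq cC_1 R(\xi)+dC_2 R(\nu).
\end{equation*}

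For part~\eqref{item:Scaling3}, the hypothesis $\beta>\gamma$ forces $d>0$. If $c=0$ then $\gamma=0$ and the preliminary bound gives $|\nu^\beta|\leq CR(\nu)\leq\epsilon R(\xi)+CR(\nu)$ for any $\epsilon>0$. Otherwise $c,d>0$, and I apply Young's inequality with a splitting parameter $\delta>0$:
\begin{equation*}
|\xi^\gamma\nu^{\beta-\gamma}|=\bigl(\delta|\xi^\gamma|\bigr)\bigl(\delta^{-1}|\nu^{\beta-\gamma}|\bigr)\leq c\delta^{1/c}|\xi^\gamma|^{1/c}+d\delta^{-1/d}|\nu^{\beta-\gamma}|^{1/d}\leq c\delta^{1/c}C_1 R(\xi)+d\delta^{-1/d}C_2 R(\nu).
\end{equation*}
Given $\epsilon>0$, choose $\delta$ small enough that $c\delta^{1/c}C_1\leq\epsilon$; the constant $M:=d\delta^{-1/d}C_2$ then gives the desired bound. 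There is no substantial obstacle in this proof: once the preliminary comparison $|\xi^\alpha|\leq CR(\xi)^{|\alpha:2\mathbf{m}|}$ is in hand, everything reduces to Young's inequality, with only minor bookkeeping in the degenerate cases $c=0$ or $d=0$.
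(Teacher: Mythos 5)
Your proof is correct and follows a genuinely different, though closely related, route. The paper works directly in the product space $W\oplus W$: it applies the compactness/homogeneity comparison embodied in Proposition~\ref{prop:ComparePoly} to the bivariate function $(\xi,\nu)\mapsto|\xi^\gamma\nu^{\beta-\gamma}|$ against $(\xi,\nu)\mapsto R(\xi)+R(\nu)$ under the contracting group $\{t^{E\oplus E}\}$, and for item~\ref{item:Scaling3} it rescales $\xi\mapsto t^{-E}\xi$ inside the item~\ref{item:Scaling2} bound to produce the small coefficient on $R(\xi)$. You instead establish the univariate estimate $|\xi^\alpha|\le C_\alpha R(\xi)^{|\alpha:2\mathbf{m}|}$ once and for all, via Proposition~\ref{prop:ComparePoly} on $W$ applied to $|\xi^\alpha|^{1/|\alpha:2\mathbf{m}|}$, and then glue the two factors together with (weighted) Young's inequality. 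Both arguments hinge on the same comparison proposition, but yours is more modular: the bivariate content is reduced to a single classical inequality, and the paper's bespoke rescaling step for item~\ref{item:Scaling3} is replaced by the standard $\delta$-weighted form of Young. The paper's version is self-contained and handles the cross-term in one stroke. Your treatment of the degenerate cases $\gamma=0$, $\beta=\gamma$ is correct, as is the check that the conjugate exponents $(1/c,1/d)$ are admissible when $c,d>0$ and $c+d=1$. (Minor note: the lemma as printed says ``$\beta\le\gamma$,'' evidently a typo for ``$\gamma\le\beta$''; you read it correctly.)
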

\begin{proof}
Assuming the notation of Lemma \ref{lem:PolynomialRepresentation}, let $E=E_{\mathbf{w}}^{2\mathbf{m}}\in\End(W)$ and consider the contracting group $\{t^{E\oplus E}\}=\{t^{E}\oplus t^{E}\}$ on $W\oplus W$. Because $R$ is a positive-definite polynomial, it immediately follows that $W\oplus W\ni (\xi,\nu)\mapsto R(\xi)+R(\nu)$ is positive-definite. Let $|\cdot|$ be a norm on $W\oplus W$ and respectively denote by $B$ and $S$ the corresponding unit ball and unit sphere in this norm.

To see Item \ref{item:Scaling1}, first observe that
\begin{equation*}
\sup_{(\xi,\nu)\in S}\frac{|\xi^{\gamma}\nu^{\beta-\gamma}|}{(R(\xi)+R(\nu))^n}=:M<\infty
\end{equation*}
Now, for any $(\xi,\nu)\in W\oplus W\setminus B$, because $\{t^{E\oplus E}\}$ is contracting, it follows from the intermediate value theorem that, for some $t\geq 1$, $t^{-(E\oplus E)}(\xi,\nu)=(t^{-E}\xi,t^{-E}\nu)\in S$. Correspondingly,
\begin{eqnarray*}
|\xi^{\gamma}\nu^{\beta-\gamma}|&=&t^{|\beta:2\mathbf{m}|}|(t^{-E}\xi)^{\gamma}(t^{-E})^{\beta-\gamma}|\\
&\leq & t^{|\beta:2\mathbf{m}|}M(R(t^{-E}\xi)+R(t^{-E}\nu))^n\\
&\leq &t^{|\beta:\mathbf{m}|/2-n}M(R(\xi)+R(\nu))^n\\
&\leq &M(R(\xi)+R(\nu))^n
\end{eqnarray*}
because $|\beta:\mathbf{m}|/2\leq n$. One obtains the constant $M'$ and hence the desired inequality by simply noting that $|\xi^{\gamma}\nu^{\beta-\gamma}|$ is bounded for all $(\xi,\nu)\in B$.

For Item \ref{item:Scaling2}, we use analogous reasoning to obtain a positive constant $M$ for which $|\xi^{\gamma}\nu^{\beta-\gamma}|\leq M (R(\xi)+R(\nu))$ for all $(\xi,\nu)\in S$. Now, for any non-zero $(\xi,\nu)\in W\oplus W $, the intermediate value theorem gives $t>0$ for which $t^{E\oplus E}(\xi,\nu)=(t^{E}\xi,t^{E}\nu)\in S$ and hence
\begin{equation*}
|\xi^{\gamma}\nu^{\beta-\gamma}|\leq t^{-|\beta:2\mathbf{m}|}M (R(t^{E}\xi)+R(t^E\nu))=M(R(\xi)+R(\nu))
\end{equation*}
where we have used the fact that $|\beta:2\mathbf{m}|=|\beta:\mathbf{m}|/2=1$ and that $E\in \Exp(R)$. As this inequality must also trivially hold at the origin, we can conclude that it holds for all $\xi,\nu\in W$, as desired.

Finally, we prove Item \ref{item:Scaling3}. By virtue of Item \ref{item:Scaling2}, for any $\xi,\nu\in W$ and $t>0$,
\begin{eqnarray*}
\lefteqn{\hspace{-1.5cm}|\xi^{\gamma}\nu^{\beta-\gamma}|=|(t^Et^{-E}\xi)^{\gamma}\nu^{\beta-\gamma}|=t^{|\gamma:2\mathbf{m}|}|(t^{-E}\xi)^{\gamma}\nu^{\beta-\gamma}|}\\
&\leq& t^{|\gamma:2\mathbf{m}|}\left(MR(t^{-E}\xi)+M'R(\nu)\right)=Mt^{|\gamma:2\mathbf{m}|-1}R(\xi)+M't^{|\gamma:2\mathbf{m}|}R(\nu).
\end{eqnarray*}
Noting that $|\gamma:2\mathbf{m}|-1<0$ because $\gamma<\beta$, we can make the coefficient of $R(\xi)$ arbitrarily small by choosing $t$ sufficiently large and thereby obtaining the desired result.
\end{proof}

\subsection{Notions of regularity and H\"{o}lder continuity}

Throughout the remainder of this article, $\mathbf{v}$ will denote a fixed basis for $\mathbb{V}$ and correspondingly we henceforth assume the notational conventions appearing in Proposition \ref{prop:OperatorRepresentation} and $\mathbf{n}=2\mathbf{m}$ is fixed. For $\alpha\in\mathbb{R}_+^d$, consider the homogeneous norm $|\cdot|_{\mathbf{v}}^{\alpha}$ defined by
\begin{equation*}
|x|_{\mathbf{v}}^{\alpha}=\sum_{i=1}^{d}|x_i|^{\alpha_i}
\end{equation*}
for $x\in\mathbb{V}$ where $\phi_{\mathbf{v}}(x)=(x_1,x_2,\dots,x_d)$. As one can easily check, 
\begin{equation*}
|t^{E_{\mathbf{v}}^\alpha} x|_{\mathbf{v}}^{\alpha}=t|x|_{\mathbf{v}}^{\alpha}
\end{equation*}
for all $t>0$ and $x\in\mathbb{V}$ where $E_{\mathbf{v}}^\alpha\in \Gl(\mathbb{V})$ is defined by \eqref{eq:DefofE}.
\begin{definition}\label{def:Consistent}
Let $\mathbf{m}\in\mathbb{N}_+^d$. We say that $\alpha\in\mathbb{R}_+^d$ is consistent with $\mathbf{m}$ if
\begin{equation}\label{eq:Consistent}
E_{\mathbf{v}}^{\alpha}= a(I-E_{\mathbf{v}}^{2\mathbf{m}})
\end{equation}
for some $a>0$.
\end{definition}
\noindent As one can check, $\alpha$ is consistent with $\mathbf{m}$ if and only if $\alpha=a^{-1}\omega$ where $\omega$ is defined by \eqref{eq:DefOfOmega}.

\begin{definition}
Let $\Omega\subseteq\Omega'\subseteq\mathbb{V}$ and let $f:\Omega'\rightarrow \mathbb{C}$. We say that $f$ is $\mathbf{v}$-H\"{o}lder continuous on $\Omega$ if for some $\alpha\in\mathbb{I}_+^d$ and positive constant $M$,
\begin{equation}\label{eq:HolderCondition}
|f(x)-f(y)|\leq M|x-y|_{\mathbf{v}}^{\alpha}
\end{equation}
for all $x,y\in\Omega$. In this case we will say that $\alpha$ is the $\mathbf{v}$-H\"{o}lder exponent of $f$. If $\Omega=\Omega'$ we will simply say that $f$ is $\mathbf{v}$-H\"{o}lder continuous with exponent $\alpha$. 
 \end{definition}
 
\noindent The following proposition essentially states that, for bounded functions, H\"{o}lder continuity is a local property; its proof is straightforward and is omitted.
 
 \begin{proposition}
 Let $\Omega\subseteq\mathbb{V}$ be open and non-empty. If $f$ is bounded and $\mathbf{v}$-H\"{o}lder continuous of order $\alpha\in\mathbb{I}_+^d$, then, for any $\beta<\alpha$, $f$ is also $\mathbf{v}$-H\"{o}lder continuous of order $\beta$. 
 \end{proposition}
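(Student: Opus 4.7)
The plan is to prove the proposition by a simple case split based on the magnitude of $|x-y|_{\mathbf{v}}^{\beta}$. The Hölder hypothesis with exponent $\alpha$ controls $|f(x)-f(y)|$ for nearby pairs, while the assumed boundedness of $f$ handles pairs that are far apart. First I would observe that the inequality $\beta < \alpha$ on $\mathbb{R}_+^d$ is the componentwise partial order, so $\beta_i \le \alpha_i$ for every $i=1,2,\dots,d$, and moreover $\beta_i > 0$ since $\beta\in \mathbb{I}_+^d$.

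Fix $x,y\in\Omega$ and write $\phi_{\mathbf{v}}(x-y)=(z_1,z_2,\dots,z_d)$. Suppose first that $|x-y|_{\mathbf{v}}^{\beta}\le 1$. Since $|z_i|^{\beta_i}\le |x-y|_{\mathbf{v}}^{\beta}\le 1$ and $\beta_i > 0$, each $|z_i|\le 1$. Because $\alpha_i\ge\beta_i$, the inequality $|z_i|^{\alpha_i}\le |z_i|^{\beta_i}$ holds for each $i$; summing over $i$ yields $|x-y|_{\mathbf{v}}^{\alpha}\le |x-y|_{\mathbf{v}}^{\beta}$. Consequently,
\begin{equation*}
|f(x)-f(y)|\le M|x-y|_{\mathbf{v}}^{\alpha}\le M|x-y|_{\mathbf{v}}^{\beta}
\end{equation*}
by the $\mathbf{v}$-Hölder hypothesis. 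In the complementary case $|x-y|_{\mathbf{v}}^{\beta}>1$, boundedness of $f$ supplies
\begin{equation*}
|f(x)-f(y)|\le 2\|f\|_\infty \le 2\|f\|_\infty |x-y|_{\mathbf{v}}^{\beta}.
\end{equation*}

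Combining both cases, the constant $M':=\max(M,2\|f\|_\infty)$ witnesses $\mathbf{v}$-Hölder continuity of $f$ with exponent $\beta$. There is no substantive obstacle here; the only place where care is required is the trivial observation that $|z_i|^{\alpha_i}\le |z_i|^{\beta_i}$ needs the localization $|z_i|\le 1$, which is precisely why boundedness of $f$ must be invoked to patch in the complementary regime.
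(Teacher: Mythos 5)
Your proof is correct. The paper actually omits its own proof, noting only that it is ``straightforward''; the case split you perform---using the pointwise inequality $|z_i|^{\alpha_i}\le|z_i|^{\beta_i}$ when $|x-y|_{\mathbf{v}}^{\beta}\le 1$ and boundedness of $f$ otherwise---is precisely the standard argument the authors had in mind.
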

 
\noindent In view of the proposition, we immediately obtain the following corollary.

\begin{corollary}\label{cor:HolderConsistent}
Let $\Omega\subseteq \mathbb{V}$ be open and non-empty and $\mathbf{m}\in\mathbb{N}_+^d$. If $f$ is bounded and $\mathbf{v}$-H\"{o}lder continuous on $\Omega$ of order $\beta\in\mathbb{I}_+^d$, there exists $\alpha\in\mathbb{I}_+^d$ which is consistent with $\mathbf{m}$ for which $f$ is also $\mathbf{v}$-H\"{o}lder continuous of order $\alpha$.
\end{corollary}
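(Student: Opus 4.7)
The plan is to reduce the corollary directly to the preceding proposition by exhibiting an explicit consistent $\alpha\in\mathbb{I}_+^d$ that lies (componentwise) strictly below the given H\"older exponent $\beta$, and then invoking the proposition to pass from exponent $\beta$ down to exponent $\alpha$.

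First I would unpack the definition of consistency. By the remark immediately following Definition \ref{def:Consistent}, every $\alpha$ consistent with $\mathbf{m}$ has the form $\alpha=a^{-1}\omega$ for some $a>0$, where
\begin{equation*}
\omega=\left(\frac{2m_1}{2m_1-1},\frac{2m_2}{2m_2-1},\dots,\frac{2m_d}{2m_d-1}\right)\in\mathbb{R}_+^d
\end{equation*}
is the fixed tuple from \eqref{eq:DefOfOmega}. Note $\omega_k\in(1,2]$ for every $k$, so the entries of $\omega$ are bounded positive numbers independent of $\beta$.

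Next I would choose the free parameter $a>0$ large enough to force $a^{-1}\omega$ beneath $\beta$. Specifically, pick
\begin{equation*}
a>\max_{1\le k\le d}\frac{\omega_k}{\beta_k},
\end{equation*}
which is legitimate because each $\beta_k>0$. Setting $\alpha:=a^{-1}\omega$, one then has $0<\alpha_k<\beta_k\le 1$ for every $k$, so $\alpha\in\mathbb{I}_+^d$ and $\alpha<\beta$ in the standard partial order on multi-indices; moreover $\alpha$ is consistent with $\mathbf{m}$ by construction. Applying the preceding proposition to the bounded, $\mathbf{v}$-H\"older continuous (of order $\beta$) function $f$ with this choice of $\alpha$ then yields that $f$ is $\mathbf{v}$-H\"older continuous of order $\alpha$ on $\Omega$, which is exactly the desired conclusion.

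There is no substantive obstacle here: the only content is the elementary observation that the one-parameter family $\{a^{-1}\omega:a>0\}$ of consistent exponents sweeps out arbitrarily small positive multi-indices, so it always meets the order ideal $\{\gamma\in\mathbb{I}_+^d:\gamma<\beta\}$ on which the preceding proposition operates. The proof is effectively a one-line application of that proposition once the parameter $a$ has been selected.
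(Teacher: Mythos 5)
Your proof is correct and follows the same strategy as the paper's one-line argument, which simply says to ``choose any $\alpha$, consistent with $\mathbf{m}$, such that $\alpha\leq\beta$'' and apply the preceding proposition. You have merely filled in the elementary (and omitted) detail of how large to take the scaling parameter $a$ so that $\alpha=a^{-1}\omega$ lands strictly below $\beta$ in $\mathbb{I}_+^d$, which is exactly what the proposition requires.
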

\begin{proof}
The statement follows from the proposition by choosing any $\alpha$, consistent with $\mathbf{m}$, such that $\alpha\leq \beta$. 
\end{proof}

\noindent The following definition captures the minimal regularity we will require of fundamental solutions to the heat equation.
\begin{definition}
Let $\mathbf{n}\in\mathbb{N}_+^d$, $\mathbf{v}$ be a basis of $\mathbb{V}$ and let $\mathcal{O}$ be a non-empty open subset of $[0,T]\times\mathbb{V}$. A function $u(t,x)$ is said to be $(\mathbf{n,v})$-regular on $\mathcal{O}$ if on $\mathcal{O}$ it is continuously differentiable in $t$ and has continuous (spatial) partial derivatives $D_{\mathbf{v}}^{\beta}u(t,x)$ for all multi-indices $\beta$ for which $|\beta:\mathbf{n}|\leq 1$.
\end{definition}

\subsection{The Legendre-Fenchel transform and its interplay with $\mathbf{v}$-H\"{o}lder continuity}

Throughout this section, $R$ is the real part of the symbol $P$ of a positive-homogeneous operator $\Lambda$ on $\mathbb{V}$. We assume the notation of Proposition \ref{prop:LegendreTransformProperties} (and hence Proposition \ref{prop:OperatorRepresentation}) and write $E=E_{\mathbf{v}}^{2\mathbf{m}}$. Let us first record two important results which follow essentially from Proposition \ref{prop:LegendreTransformProperties}. 

\begin{corollary}\label{cor:LegendreCompareHomogeneousNorm}
\begin{equation*}
R^{\#}\asymp |\cdot|_{\mathbf{v}}^\omega.
\end{equation*}
where $\omega$ was defined in \eqref{eq:DefOfOmega}.
\end{corollary}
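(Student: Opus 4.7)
The plan is to reduce the assertion to a direct application of Proposition \ref{prop:ComparePoly}, by showing that both $R^{\#}$ and $|\cdot|_{\mathbf{v}}^{\omega}$ are continuous, positive-definite, and share a common exponent whose one-parameter group is contracting.

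First, I would recall from the computation made in the proof of Corollary \ref{cor:MovingConstants} that
\begin{equation*}
I - E_{\mathbf{v}}^{2\mathbf{m}} = E_{\mathbf{v}}^{\omega},
\end{equation*}
where $\omega$ is the $d$-tuple from \eqref{eq:DefOfOmega}. By Proposition \ref{prop:LegendreTransformProperties}, $R^{\#}$ is continuous, positive-definite, and satisfies $I - E_{\mathbf{v}}^{2\mathbf{m}} = E_{\mathbf{v}}^{\omega} \in \Exp(R^{\#})$. On the other hand, the homogeneous norm $|\cdot|_{\mathbf{v}}^{\omega}$ is clearly continuous and positive-definite, and the identity $|t^{E_{\mathbf{v}}^{\omega}} x|_{\mathbf{v}}^{\omega} = t |x|_{\mathbf{v}}^{\omega}$, which was noted just before Definition \ref{def:Consistent}, shows that $E_{\mathbf{v}}^{\omega} \in \Exp(|\cdot|_{\mathbf{v}}^{\omega})$ as well.

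Second, since each component $2m_k/(2m_k - 1)$ of $\omega$ is strictly positive, the endomorphism $E_{\mathbf{v}}^{\omega}$ is diagonalizable with strictly positive spectrum in the basis $\mathbf{v}$. Therefore, by the remark immediately following the definition of a contracting group in Section \ref{sec:Holder}, the one-parameter group $\{t^{E_{\mathbf{v}}^{\omega}}\}$ is contracting. We now have two continuous, positive-definite functions on $\mathbb{V}$ sharing the common exponent $E_{\mathbf{v}}^{\omega}$ with $\{t^{E_{\mathbf{v}}^{\omega}}\}$ contracting, so Proposition \ref{prop:ComparePoly} applies in both directions to give
\begin{equation*}
R^{\#} \asymp |\cdot|_{\mathbf{v}}^{\omega},
\end{equation*}
as claimed.

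There is no real obstacle here: the entire content of the corollary is in recognizing that the Legendre-Fenchel transform and the homogeneous norm share the common homogeneity exponent $E_{\mathbf{v}}^{\omega}$, and that Proposition \ref{prop:ComparePoly} is designed exactly to extract comparability from this data. The only small bookkeeping item is verifying that $I - E_{\mathbf{v}}^{2\mathbf{m}}$ really does equal $E_{\mathbf{v}}^{\omega}$, which follows componentwise from $1 - 1/(2m_k) = (2m_k-1)/(2m_k) = 1/\omega_k$.
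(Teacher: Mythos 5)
Your proof is correct and follows essentially the same route as the paper: identify $E_{\mathbf{v}}^{\omega}=I-E_{\mathbf{v}}^{2\mathbf{m}}$ as a common member of $\Exp(R^{\#})\cap\Exp(|\cdot|_{\mathbf{v}}^{\omega})$ via Proposition \ref{prop:LegendreTransformProperties}, note that $\{t^{E_{\mathbf{v}}^{\omega}}\}$ is contracting, and invoke Proposition \ref{prop:ComparePoly}. The only difference is that you spell out the bookkeeping (continuity, positive-definiteness, and the componentwise identity $1-1/(2m_k)=1/\omega_k$) that the paper leaves implicit.
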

\begin{proof}
In view of Propositions \ref{prop:OperatorRepresentation} and \ref{prop:LegendreTransformProperties}, $E_{\mathbf{v}}^\omega=I-E_{\mathbf{v}}^{2\mathbf{m}}\in \Exp(R^{\#})\cap\Exp(|\cdot|_{\mathbf{v}}^{\omega})$. After recalling that $\{t^{E_{\mathbf{v}}^{\omega}}\}$ is contracting, Proposition \ref{prop:ComparePoly} yields the desired result immediately.
\end{proof}
\noindent By virtue of Proposition \ref{prop:LegendreTransformProperties}, standard arguments immediately yield the following corollary. 
\begin{corollary}\label{cor:ExponentialofLegendreTransformIntegrable}
For any $\epsilon>0$ and polynomial $Q:\mathbb{V}\rightarrow\mathbb{C}$, i.e., $Q$ is a polynomial in any coordinate system, then 
\begin{equation*}
Q(\cdot)e^{-\epsilon R^{\#}(\cdot)}\in L^{\infty}(\mathbb{V})\cap L^1(\mathbb{V}).
\end{equation*}
\end{corollary}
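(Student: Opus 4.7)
The plan is to prove both the $L^{\infty}$ and $L^{1}$ conclusions by splitting $e^{-\epsilon R^{\#}}=e^{-(\epsilon/2) R^{\#}}\cdot e^{-(\epsilon/2) R^{\#}}$ and showing that (a) the product $|Q|e^{-(\epsilon/2)R^{\#}}$ is bounded on $\mathbb{V}$, while (b) $e^{-(\epsilon/2)R^{\#}}$ lies in $L^{1}(\mathbb{V})$. Once both hold, $Q\,e^{-\epsilon R^{\#}}$ is the product of a bounded function and an $L^{1}$ function, hence lies in $L^{\infty}(\mathbb{V})\cap L^{1}(\mathbb{V})$.

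For (a), I would fix the coordinate system $\phi_{\mathbf{v}}$ to identify $\mathbb{V}$ with $\mathbb{R}^{d}$; then $Q$ becomes an ordinary polynomial in the coordinates $(x_{1},\dots,x_{d})$, so there exist $C>0$ and $N\in\mathbb{N}$ with $|Q(x)|\leq C(1+|x_{1}|^{N}+\cdots+|x_{d}|^{N})$. By Corollary \ref{cor:LegendreCompareHomogeneousNorm}, $R^{\#}(x)\geq c\,|x|_{\mathbf{v}}^{\omega}=c\sum_{i}|x_{i}|^{\omega_{i}}$ for some $c>0$, with each $\omega_{i}=2m_{i}/(2m_{i}-1)>0$. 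Since $|x_{i}|^{N}e^{-(\epsilon c/2d)|x_{i}|^{\omega_{i}}}$ is bounded uniformly in $x_{i}\in\mathbb{R}$ (elementary one-variable calculus, using $\omega_{i}>0$), distributing the exponential across the coordinates shows $|Q(x)|e^{-(\epsilon/2)R^{\#}(x)}$ is bounded on $\mathbb{R}^{d}$.

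For (b), again pass to coordinates via $\phi_{\mathbf{v}}$ and apply Corollary \ref{cor:LegendreCompareHomogeneousNorm} to bound
\begin{equation*}
\int_{\mathbb{V}}e^{-(\epsilon/2) R^{\#}(x)}\,dx\leq \int_{\mathbb{R}^{d}}e^{-c'\sum_{i}|x_{i}|^{\omega_{i}}}\,dx_{1}\cdots dx_{d}=\prod_{i=1}^{d}\int_{\mathbb{R}}e^{-c'|x_{i}|^{\omega_{i}}}\,dx_{i},
\end{equation*}
by Fubini. Each one-dimensional factor equals $2\Gamma(1/\omega_{i})/(\omega_{i}(c')^{1/\omega_{i}})$ and is finite since $\omega_{i}>0$. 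This establishes integrability of $e^{-(\epsilon/2)R^{\#}}$ and completes the proof.

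I expect no real obstacle; the argument is essentially a routine reduction to a product Gaussian-type integral in coordinates. The only point requiring some care is to ensure polynomial growth of $Q$ is understood in the $\mathbf{v}$-coordinate system (so that the product decomposition of $e^{-R^{\#}}$ via $\sum|x_{i}|^{\omega_{i}}$ can be used), but this is immediate from the definition of a polynomial on $\mathbb{V}$ as a function which is polynomial in every linear coordinate system.
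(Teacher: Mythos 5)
Your argument is correct. The paper itself gives no proof, just invokes ``standard arguments'' via Proposition~\ref{prop:LegendreTransformProperties}, whose key input is the superlinear growth of $R^{\#}$ in an arbitrary norm: combined with the continuity and positive-definiteness of $R^{\#}$, this gives, for any $A>0$, that $R^{\#}(x)\geq A|x|$ outside a compact set, and so $|Q(x)|e^{-\epsilon R^{\#}(x)}$ is dominated by $C|x|^{N}e^{-\epsilon A|x|}$ at infinity and bounded on the compact remainder, yielding $L^\infty\cap L^1$. You instead go through Corollary~\ref{cor:LegendreCompareHomogeneousNorm}, i.e., the two-sided comparison $R^{\#}\asymp|\cdot|_{\mathbf{v}}^{\omega}$. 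This is a more concrete consequence of the same proposition and buys you a clean factorization: the exponential splits over coordinates as $\prod_i e^{-c'|x_i|^{\omega_i}}$, so both boundedness and integrability reduce to explicit one-dimensional computations (with $\Gamma(1/\omega_i)$ appearing as the exact value). Your route is therefore slightly more explicit and self-contained, at the modest cost of using the anisotropic norm comparison rather than the bare superlinear growth statement; both are equally rigorous, and there is no gap. One small cosmetic remark: since $m_i\geq 1$, the exponents actually satisfy $\omega_i>1$, not just $\omega_i>0$; you only use $\omega_i>0$, which is fine, but the stronger fact is freely available.
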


\begin{lemma}\label{lem:PreLegendreSubscale}
Let $\gamma=(2m_{\max}-1)^{-1}$. Then for any $T>0$, there exists $M>0$ such that
\begin{equation*}
R^{\#}(x)\leq Mt^{\gamma}R^{\#}(t^{-E}x)
\end{equation*}
for all $x\in\mathbb{V}$ and $0<t\leq T$. 
\end{lemma}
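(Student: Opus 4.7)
The plan is to reduce the desired inequality to an analogous one for the homogeneous norm $|\cdot|_{\mathbf{v}}^{\omega}$, which can then be verified by a direct coordinate-wise computation. By Corollary \ref{cor:LegendreCompareHomogeneousNorm}, $R^{\#}$ is comparable to $|\cdot|_{\mathbf{v}}^{\omega}$, so it suffices to produce a constant $M' > 0$ for which
\begin{equation*}
|x|_{\mathbf{v}}^{\omega} \leq M' t^{\gamma} |t^{-E} x|_{\mathbf{v}}^{\omega}
\end{equation*}
for all $x \in \mathbb{V}$ and $0 < t \leq T$; the constant $M$ in the statement is then obtained by absorbing the comparison constants between $R^{\#}$ and $|\cdot|_{\mathbf{v}}^{\omega}$.

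Next, I would expand both sides in coordinates. Writing $x = \sum_{k=1}^d x_k v_k$ and using $E v_k = (1/(2m_k)) v_k$, the isomorphism $t^{-E}$ acts diagonally, so
\begin{equation*}
|t^{-E} x|_{\mathbf{v}}^{\omega} = \sum_{k=1}^d t^{-\omega_k/(2m_k)} |x_k|^{\omega_k} = \sum_{k=1}^d t^{-1/(2m_k - 1)} |x_k|^{\omega_k},
\end{equation*}
where the second equality uses the identity $\omega_k/(2m_k) = 1/(2m_k - 1)$ coming from the definition \eqref{eq:DefOfOmega} of $\omega$. The inequality to prove thus takes the form
\begin{equation*}
\sum_{k=1}^d |x_k|^{\omega_k} \leq M' \sum_{k=1}^d t^{\gamma - 1/(2m_k - 1)} |x_k|^{\omega_k}.
\end{equation*}

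The key observation, which motivates the specific choice of $\gamma$, is that $\gamma = 1/(2m_{\max} - 1) = \min_k 1/(2m_k - 1)$, so every exponent $\gamma - 1/(2m_k - 1)$ is non-positive. In particular, on the interval $0 < t \leq T$, the factor $t^{\gamma - 1/(2m_k - 1)}$ is bounded below by the positive constant $T^{\gamma - 1/(2m_k - 1)}$ (this is trivial when the exponent vanishes and follows from the monotonicity of $t \mapsto t^s$ for $s < 0$ otherwise). Taking $M' = \max_k T^{1/(2m_k - 1) - \gamma}$ yields the inequality term by term, hence in aggregate. There is essentially no obstacle: the entire content lies in recognizing that the choice $\gamma = 1/(2m_{\max} - 1)$ is exactly what is required to make the exponents $\gamma - 1/(2m_k - 1)$ uniformly non-positive, which is precisely where the ``$\max$'' in the definition of $\gamma$ plays its role.
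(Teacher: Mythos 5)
Your proof is correct and follows essentially the same route as the paper: reduce via Corollary \ref{cor:LegendreCompareHomogeneousNorm} to an inequality for the homogeneous norm $|\cdot|_{\mathbf{v}}^{\omega}$, then verify it coordinate-by-coordinate using the identity $\omega_k/(2m_k)=1/(2m_k-1)$ and the observation that $\gamma=\min_k 1/(2m_k-1)$ makes all the exponents $\gamma-1/(2m_k-1)$ non-positive. The only cosmetic difference is that the paper states the reduced inequality with $x$ replaced by $t^E x$, writing it as $|t^Ex|_{\mathbf{v}}^{\omega}\leq Mt^{\gamma}|x|_{\mathbf{v}}^{\omega}$, which is equivalent to your form under the change of variable $x\mapsto t^Ex$.
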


\begin{proof}
In view of Corollary \ref{cor:LegendreCompareHomogeneousNorm}, it suffices to prove the statement
\begin{equation*}
|t^Ex|_{\mathbf{v}}^{\omega}\leq Mt^{\gamma}|x|_{\mathbf{v}}^{\omega}
\end{equation*}
for all $x\in\mathbb{V}$ and $0<t\leq T$ where $M>0$ and $\omega$ is given by \eqref{eq:DefOfOmega}. But for any $0<t\leq T$ and $x\in\mathbb{V}$,
\begin{equation*}
|t^{E}x|_{\mathbf{v}}^{\omega}=\sum_{j=1}^d t^{1/(2m_j-1)}|x_j|^{\omega_j}\leq t^{\gamma}\sum_{j=1}^d T^{(1/(2m_j-1)-\gamma)}|x_j|^{\omega_j}
\end{equation*}
from which the result follows.
\end{proof}

\begin{lemma}\label{lem:LegendreSubscale}
Let $\alpha\in\mathbb{I}_+^d$ be consistent with $\mathbf{m}$. Then there exists positive constants $\sigma$ and $\theta$ such that $0<\sigma<1$ and for any $T>0$ there exists $M>0$ such that
\begin{equation*}
|x|_{\mathbf{v}}^{\alpha}\leq Mt^{\sigma}(R^{\#}(t^{-E}x))^{\theta}
\end{equation*}
for all $x\in\mathbb{V}$ and $0<t\leq T$. 
\end{lemma}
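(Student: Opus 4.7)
The plan is to reduce the inequality to the already-proved comparison in Lemma \ref{lem:PreLegendreSubscale} by means of two preliminary observations, one structural and one analytic.

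First, I would unpack the consistency condition. Since $\alpha$ is consistent with $\mathbf{m}$, we have $\alpha = a^{-1}\omega$ for some $a > 0$, where $\omega$ is as in \eqref{eq:DefOfOmega}. Because $\alpha \in \mathbb{I}_+^d$ forces $\alpha_j \leq 1$ while each $\omega_j = 2m_j/(2m_j - 1) > 1$, the common ratio $a = \omega_j/\alpha_j$ must satisfy $a > 1$. This will let me define
\[
\theta := \frac{1}{a} \in (0,1) \quad \text{and} \quad \sigma := \frac{\gamma}{a},
\]
where $\gamma = (2m_{\max}-1)^{-1}$ is the exponent from Lemma \ref{lem:PreLegendreSubscale}. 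Since $\gamma \leq 1$ and $a > 1$, one automatically has $0 < \sigma < 1$, which is precisely the range required by the statement.

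Second, I would relate $|x|_{\mathbf{v}}^{\alpha}$ to $(|x|_{\mathbf{v}}^{\omega})^{1/a}$. Writing
\[
|x|_{\mathbf{v}}^{\alpha} = \sum_{j=1}^d |x_j|^{\omega_j/a} = \sum_{j=1}^d \bigl(|x_j|^{\omega_j}\bigr)^{1/a},
\]
the concavity of $u \mapsto u^{1/a}$ on $[0,\infty)$ (equivalently, Jensen's inequality applied with uniform weights to $d$ summands) yields $\sum_j c_j^{1/a} \leq d^{1-1/a} \bigl(\sum_j c_j\bigr)^{1/a}$ for nonnegative $c_j$, and hence
\[
|x|_{\mathbf{v}}^{\alpha} \leq d^{1-1/a} \bigl(|x|_{\mathbf{v}}^{\omega}\bigr)^{1/a}.
\]

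The final step is to stitch together the known estimates. An application of Corollary \ref{cor:LegendreCompareHomogeneousNorm} replaces $|x|_{\mathbf{v}}^{\omega}$ by a constant multiple of $R^{\#}(x)$, and then Lemma \ref{lem:PreLegendreSubscale} bounds $R^{\#}(x)$ by $M' t^{\gamma} R^{\#}(t^{-E}x)$ for $0 < t \leq T$. Raising the composed inequality to the power $1/a$ produces
\[
|x|_{\mathbf{v}}^{\alpha} \leq M\, t^{\gamma/a} \bigl(R^{\#}(t^{-E}x)\bigr)^{1/a} = M\, t^{\sigma} \bigl(R^{\#}(t^{-E}x)\bigr)^{\theta},
\]
which is the claim, all constants having been absorbed into a single $M = M(T)$. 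There is no real obstacle here, since every ingredient has already been established; the one point that genuinely requires the consistency hypothesis (rather than mere positivity of $\alpha$) is the strict inequality $a > 1$, which in turn is what guarantees both $\theta \in (0,1)$ and $\sigma \in (0,1)$.
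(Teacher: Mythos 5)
Your proof is correct and follows essentially the same route as the paper: combine Corollary \ref{cor:LegendreCompareHomogeneousNorm} with Lemma \ref{lem:PreLegendreSubscale}, write $\alpha = a^{-1}\omega$, and take $\sigma = \gamma/a$, $\theta = 1/a$. The only difference is that you make explicit, via concavity of $u \mapsto u^{1/a}$ for $a>1$, the comparison $|x|_{\mathbf{v}}^{\alpha} \leq d^{1-1/a}\bigl(|x|_{\mathbf{v}}^{\omega}\bigr)^{1/a}$ that the paper leaves implicit in the phrase ``the desired inequality follows,'' which is a worthwhile detail to spell out.
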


\begin{proof}
By an appeal to Corollary \ref{cor:LegendreCompareHomogeneousNorm} and Lemma \ref{lem:PreLegendreSubscale}, 
\begin{equation*}
|x|_{\mathbf{v}}^{\omega}\leq Mt^{\gamma}R^{\#}(t^{-E}x)
\end{equation*}
for all $x\in\mathbb{V}$ and $0<t\leq T$. Since $\alpha$ is consistent with $\mathbf{m}$, $\alpha=a^{-1}\omega$ where $a$ is that of Definition \ref{def:Consistent}, the desired inequality follows by setting $\sigma=\gamma/a$ and $\theta=1/a$. Because $\alpha\in \mathbb{I}_+^d$, it is necessary that $a \geq 2m_{\min}/(2m_{\min}-1)$ whence $0<\sigma\leq (2m_{\min}-1)/(2m_{\min}(2m_{\max}-1))<1.$
\end{proof}

\noindent The following corollary is an immediate application of Lemma \ref{lem:LegendreSubscale}.

\begin{corollary}\label{cor:HolderLegendreEstimate}
Let $f:\mathbb{V}\rightarrow\mathbb{C}$ be $\mathbf{v}$-H\"{o}lder continuous with exponent $\alpha\in\mathbb{I}_+^d$ and suppose that $\alpha$ is consistent with $\mathbf{m}$. Then there exist positive constants $\sigma$ and $\theta$ such that $0<\sigma<1$ and, for any $T>0$, there exists $M>0$ such that
\begin{equation*}
|f(x)-f(y)|\leq M t^{\sigma}(R^{\#}(t^{-E}))^{\theta}
\end{equation*}
for all $x,y\in\mathbb{V}$ and $0<t\leq T$.
\end{corollary}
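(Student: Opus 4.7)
The plan is a direct two-step chain: use the $\mathbf{v}$-Hölder hypothesis to pass from $|f(x)-f(y)|$ to the homogeneous norm $|x-y|_{\mathbf{v}}^\alpha$, and then apply Lemma \ref{lem:LegendreSubscale} to the vector $x-y$ to pass from the homogeneous norm to a quantity controlled by $R^{\#}(t^{-E}(x-y))$.

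More precisely, by the hypothesis that $f$ is $\mathbf{v}$-Hölder continuous with exponent $\alpha$, there is a constant $M_1>0$ such that
\begin{equation*}
|f(x)-f(y)|\leq M_1\,|x-y|_{\mathbf{v}}^{\alpha}
\end{equation*}
for all $x,y\in\mathbb{V}$. Next, because $\alpha\in\mathbb{I}_+^d$ is assumed to be consistent with $\mathbf{m}$, Lemma \ref{lem:LegendreSubscale} (applied with the given $T>0$ and to the vector $x-y$ in place of $x$) furnishes constants $0<\sigma<1$, $\theta>0$ and $M_2>0$ such that
\begin{equation*}
|x-y|_{\mathbf{v}}^{\alpha}\leq M_2\, t^{\sigma}\bigl(R^{\#}(t^{-E}(x-y))\bigr)^{\theta}
\end{equation*}
for every $x,y\in\mathbb{V}$ and every $0<t\leq T$. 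Crucially, the constants $\sigma$ and $\theta$ produced by Lemma \ref{lem:LegendreSubscale} depend only on $\alpha$ and $\mathbf{m}$, not on $f$, $T$, or the pair $(x,y)$; only the multiplicative constant $M_2$ depends on $T$.

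Setting $M=M_1M_2$ and combining the two displayed inequalities yields
\begin{equation*}
|f(x)-f(y)|\leq M\, t^{\sigma}\bigl(R^{\#}(t^{-E}(x-y))\bigr)^{\theta}
\end{equation*}
for all $x,y\in\mathbb{V}$ and all $0<t\leq T$, which is the claimed estimate (the statement as printed evidently omits the argument $x-y$ of $R^{\#}$). There is no serious obstacle here; the entire content has already been packaged into Lemma \ref{lem:LegendreSubscale}, whose proof uses the comparison $R^{\#}\asymp |\cdot|_{\mathbf{v}}^\omega$ from Corollary \ref{cor:LegendreCompareHomogeneousNorm} together with the elementary scaling bound in Lemma \ref{lem:PreLegendreSubscale}. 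The corollary is just the observation that Hölder continuity is precisely the hypothesis needed to feed into that scaling lemma.
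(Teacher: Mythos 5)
Your proof is correct and is precisely the paper's intended argument: the paper states the corollary is "an immediate application of Lemma \ref{lem:LegendreSubscale}," i.e., exactly the two-step chain you give (Hölder hypothesis followed by Lemma \ref{lem:LegendreSubscale} applied to $x-y$). You also correctly observed the typo in the statement, which should read $R^{\#}(t^{-E}(x-y))$.
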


\section{On $(2\mathbf{m,v})$-positive-semi-elliptic operators}\label{sec:UniformlySemiElliptic}
In this section, we introduce a class of variable-coefficient operators on $\mathbb{V}$ whose heat equations are studied in the next section. These operators, in view of Proposition \ref{prop:OperatorRepresentation}, generalize the class of positive-homogeneous operators. Fix a basis $\mathbf{v}$ of $\mathbb{V}$,  $\mathbf{m}\in\mathbb{N}_+^d$ and, in the notation of the previous section, consider a differential operator $H$ of the form
\begin{eqnarray*}
H=\sum_{|\beta:\mathbf{m}|\leq 2}a_{\beta}(x)D_{\mathbf{v}}^{\beta}&=&\sum_{|\beta:\mathbf{m}|=2}a_{\beta}(x)D_{\mathbf{v}}^{\beta}+\sum_{|\beta:\mathbf{m}|< 2}a_{\beta}(x)D_{\mathbf{v}}^{\beta}\\
&:=&H_p+H_l
\end{eqnarray*}
where the coefficients $a_{\beta}:\mathbb{V}\rightarrow \mathbb{C}$ are bounded functions. The symbol of $H$, $P:\mathbb{V}\times\mathbb{V}^*\rightarrow \mathbb{C}$, is defined by
\begin{eqnarray*}
P(y,\xi)=\sum_{|\beta:\mathbf{m}|\leq 2}a_{\beta}(y)\xi^{\beta}
&=&\sum_{|\beta:\mathbf{m}|=2}a_{\beta}(y)\xi^{\beta}+\sum_{|\beta:\mathbf{m}|<2}a_{\beta}(y)\xi^{\beta}\\
&:=&P_p(y,\xi)+P_l(y,\xi).
\end{eqnarray*}
for $y\in\mathbb{V}$ and $\xi\in\mathbb{V}^*$. We shall call $H_p$ the principal part of $H$ and correspondingly, $P_p$ is its principal symbol. Let's also define $R:\mathbb{V}^*\rightarrow \mathbb{R}$ by
\begin{equation}\label{eq:HSymbol}
R(\xi)=\Re P_p(0,\xi)
\end{equation}
for $\xi\in\mathbb{V}^*$. At times, we will freeze the coefficients of $H$ and $H_p$ at a point $y\in\mathbb{V}$ and consider the constant-coefficient operators they define,  namely $H(y)$ and $H_p(y)$ (defined in the obvious way). We note that, for each $y\in\mathbb{V}$, $H_p(y)$ is homogeneous with respect to the one-parameter group $\{\delta_t^E\}_{t>0}$ where $E=E_{\mathbf{v}}^{2\mathbf{m}}\in\Gl(\mathbb{V})$ is defined by \eqref{eq:DefofE}. That is, $H_p$ is homogeneous with respect to the same one-parameter group of dilations at each point in space. This also allows us to uniquely define the \textit{homogeneous order of} $H$ by
\begin{equation}\label{eq:HHomogeneousOrder}
\mu_{H}=\tr E=(2m_1)^{-1}+(2m_2)^{-1}+\cdots+(2m_d)^{-1}.
\end{equation}
We remark that this is consistent with our definition of homogeneous-order for constant-coefficient operators and we remind the reader that this notion differs from the usual order a partial differential operator (see the discussion surrounding \eqref{eq:HomogeneousOrderExplicit}). As in the constant-coefficient setting, $H_p(y)$ is not necessarily homogeneous with respect to a unique group of dilations, i.e., it is possible that $\Exp(H_p(y))$ contains members of $\Gl(\mathbb{V})$ distinct from $E$. However, we shall henceforth only work with the endomorphism $E$, defined above, for worrying about this non-uniqueness of dilations does not aid our understanding nor will it sharpen our results. Let us further observe that, for each $y\in\mathbb{V}$, $P_p(y,\cdot)$ and $R$ are homogeneous with respect to $\{t^{E^*}\}_{t>0}$ where $E^{*}\in \Gl(\mathbb{V}^*)$.

\begin{definition}
The operator $H$ is called $(2\mathbf{m,v})$-positive-semi-elliptic if for all $y\in\mathbb{V}$, $\Re P_p(y,\cdot)$ is a positive-definite polynomial. $H$ is called uniformly $(2\mathbf{m,v})$-positive-semi-elliptic if it is $(2\mathbf{m,v})$-positive-semi-elliptic and there exists $\delta>0$ for which
\begin{equation*}
\Re P_p(y,\xi)\geq\delta R(\xi)
\end{equation*}
for all $y\in\mathbb{V}$ and $\xi\in\mathbb{V}^*$. When the context is clear, we will simply say that $H$ is positive-semi-elliptic and uniformly positive-semi-elliptic respectively.
\end{definition}

\noindent In light of the above definition, a semi-elliptic operator $H$ is one that, at every point $y\in\mathbb{V}$, its frozen-coefficient principal part $H_p(y)$, is a constant-coefficient positive-homogeneous operator which is homogeneous with respect to the same one-parameter group of dilations on $\mathbb{V}$.  A uniformly positive-semi-elliptic operator is one that is semi-elliptic and is uniformly comparable to a constant-coefficient positive-homogeneous operator, namely $H_p(0)$.  In this way, positive-homogeneous operators take a central role in this theory.

\begin{remark}
In view of Proposition \ref{prop:OperatorRepresentation}, the definition of $R$ via \eqref{eq:HSymbol} agrees with that we have given for constant-coefficient positive-homogeneous operators. 
\end{remark}
\begin{remark}
For an $(2\mathbf{m,v})$-positive-semi-elliptic operator $H$, uniform semi-ellipticity can be formulated in terms of $\Re P_p(y_0,\cdot)$ for any $y_0\in\mathbb{V}$; such a notion is equivalent in view of Proposition \ref{prop:ComparePoly}.
\end{remark}

\section{The heat equation}\label{sec:FundamentalSolution}

\noindent For a uniformly positive-semi-elliptic operator $H$, we are interested in constructing a fundamental solution to the heat equation,
\begin{equation}\label{eq:HeatEquation}
(\partial_t+H)u=0
\end{equation}
on the cylinder $[0,T]\times \mathbb{V}$; here and throughout $T>0$ is arbitrary but fixed. By definition, a fundamental solution to \eqref{eq:HeatEquation} on $[0,T]\times\mathbb{V}$ is a function $Z:(0,T]\times\mathbb{V}\times\mathbb{V}\rightarrow \mathbb{C}$ satisfying the following two properties:

\begin{enumerate}
 \item For each $y\in\mathbb{V}$, $Z(\cdot,\cdot,y)$ is $(2\mathbf{m,v})$-regular on $(0,T)\times\mathbb{V}$ and satisfies \eqref{eq:HeatEquation}.
 \item For each $f\in C_b(\mathbb{V})$,
\begin{equation*}
\lim_{t\downarrow 0}\int_{\mathbb{V}}Z(t,x,y)f(y)dy=f(x)
\end{equation*}
for all $x\in\mathbb{V}$.
\end{enumerate}

\noindent Given a fundamental solution $Z$ to \eqref{eq:HeatEquation},  one can easily solve the Cauchy problem: Given $f\in C_b(\mathbb{V})$, find $u(t,x)$ satisfying
\begin{equation*}
\begin{cases}
(\partial_t+H)u=0 & \mbox{on}\hspace{.25cm} (0,T)\times\mathbb{V}\\
u(0,x)=f(x) & \mbox{for}\hspace{.25cm} x\in\mathbb{V}.
\end{cases}
\end{equation*}
This is, of course, solved by putting 
\begin{equation*}
u(t,x)=\int_{\mathbb{V}}Z(t,x,y)f(y)\,dy
\end{equation*}
for $x\in\mathbb{V}$ and $0<t\leq T$ and interpreting $u(0,x)$ as that defined by the limit of $u(t,x)$ as $t\downarrow 0$.
\noindent The remainder of this paper is essentially dedicated to establishing the following result:

\begin{theorem}\label{thm:FundamentalSolution}
Let $H$ be uniformly $(2\mathbf{m,v})$-positive-semi-elliptic with bounded $\mathbf{v}$-H\"{o}lder continuous coefficients. Let $R$ and $\mu_H$ be defined by \eqref{eq:HSymbol} and \eqref{eq:HHomogeneousOrder} respectively and denote by $R^{\#}$ the Legendre-Fenchel transform of $R$. Then, for any $T>0$, there exists a fundamental solution $Z:(0,T]\times\mathbb{V}\times\mathbb{V}\rightarrow \mathbb{C}$ to \eqref{eq:HeatEquation} on $[0,T]\times\mathbb{V}$ such that, for some positive constants $C$ and $M$, 
\begin{equation}\label{eq:FundamentalSolution}
|Z(t,x,y)|\leq \frac{C}{t^{\mu_{H}}}\exp\left(-tMR^{\#}\left(\frac{x-y}{t}\right)\right)
\end{equation}
for $x,y\in\mathbb{V}$ and $0<t\leq T$.
\end{theorem}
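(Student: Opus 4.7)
My plan is to adapt the classical parametrix method of E.~E.~Levi, in the form used by A.~Friedman and S.~D.~Eidelman for parabolic systems. The \emph{parametrix} is
\begin{equation*}
Z_0(t,x,y):=K_{H(y)}^{t}(x-y),
\end{equation*}
where $H(y)$ denotes the constant-coefficient positive-homogeneous operator obtained by freezing the coefficients of $H$ at $y$. The hypothesis of uniform $(2\mathbf{m},\mathbf{v})$-positive-semi-ellipticity ensures $\Re P_p(y,\xi)\ge\delta R(\xi)$ uniformly in $y$, which lets me apply Proposition \ref{prop:CCEstimates} to the family $\{H(y)\}$ with constants independent of $y$, yielding
\begin{equation*}
|\partial_t^k D_{\mathbf{v}}^\beta Z_0(t,x,y)|\le\frac{C_\beta C_0^k k!}{t^{\mu_H+k+|\beta:2\mathbf{m}|}}\exp\!\left(-tMR^{\#}\!\left(\tfrac{x-y}{t}\right)\right).
\end{equation*}
Since $Z_0(\cdot,\cdot,y)$ solves the frozen equation $(\partial_t+H(y)_x)Z_0=0$, the \emph{Levi remainder}
\begin{equation*}
K(t,x,y):=-(\partial_t+H_x)Z_0(t,x,y)=\sum_{|\beta:\mathbf{m}|\le 2}\bigl(a_\beta(y)-a_\beta(x)\bigr)D_{\mathbf{v}}^\beta Z_0(t,x,y)
\end{equation*}
involves only coefficient \emph{differences}. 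Passing to a H\"older exponent consistent with $\mathbf{m}$ via Corollary \ref{cor:HolderConsistent}, invoking Corollary \ref{cor:HolderLegendreEstimate} to bound $|a_\beta(y)-a_\beta(x)|$ by $Mt^{\sigma}(R^{\#}(t^{-E}(x-y)))^{\theta}$, and absorbing the polynomial factor in $R^{\#}$ into a small fraction of the exponential, I obtain
\begin{equation*}
|K(t,x,y)|\le\frac{C}{t^{\mu_H+1-\sigma}}\exp\!\left(-tM'R^{\#}\!\left(\tfrac{x-y}{t}\right)\right)
\end{equation*}
for some $\sigma\in(0,1)$; this gain of $t^{\sigma}$ over the ``worst case'' singularity $t^{-\mu_H-1}$ is what fuels the iteration.

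Following Levi, I seek the fundamental solution in the form
\begin{equation*}
Z(t,x,y)=Z_0(t,x,y)+\int_0^t\!\!\int_{\mathbb{V}} Z_0(t-s,x,z)\,\Phi(s,z,y)\,dz\,ds.
\end{equation*}
Applying $(\partial_t+H_x)$ and passing derivatives inside the integral, with the jump contribution $\Phi(t,x,y)$ from the diagonal $s=t$, forces the Volterra equation $\Phi=K+K\ast\Phi$, where $\ast$ denotes space--time convolution. I resolve this by Picard iteration, $\Phi=\sum_{k\ge 1}K^{(k)}$ with $K^{(1)}=K$ and $K^{(k+1)}=K\ast K^{(k)}$, and establish inductively
\begin{equation*}
|K^{(k)}(t,x,y)|\le \frac{A^k\,\Gamma(\sigma)^k}{\Gamma(k\sigma)}\cdot\frac{1}{t^{\mu_H+1-k\sigma}}\exp\!\left(-tM''R^{\#}\!\left(\tfrac{x-y}{t}\right)\right),
\end{equation*}
whose sum in $k$ converges uniformly on $[\epsilon,T]\times\mathbb{V}\times\mathbb{V}$ for every $\epsilon>0$ thanks to the beta-function asymptotics of $\Gamma(\sigma)^k/\Gamma(k\sigma)$.

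The technical crux, and what I expect to be the main obstacle, is the \emph{reproducing estimate}: for $t_1,t_2>0$ with $t=t_1+t_2$,
\begin{equation*}
\int_{\mathbb{V}} t_1^{-\mu_H}e^{-t_1 MR^{\#}((x-z)/t_1)}\cdot t_2^{-\mu_H}e^{-t_2 MR^{\#}((z-y)/t_2)}\,dz\le C\,t^{-\mu_H}e^{-tM'R^{\#}((x-y)/t)}.
\end{equation*}
The pointwise step rests on Legendre--Fenchel duality: the identity $(t_1 R+t_2 R)^{\#}=(t_1 R)^{\#}\,\square\,(t_2 R)^{\#}$, where $\square$ denotes inf-convolution, combined with the scaling $(sR)^{\#}(x)=sR^{\#}(x/s)$, yields by a minimax argument
\begin{equation*}
\inf_{z\in\mathbb{V}}\!\left\{t_1 R^{\#}\!\left(\tfrac{x-z}{t_1}\right)+t_2 R^{\#}\!\left(\tfrac{z-y}{t_2}\right)\right\}=tR^{\#}\!\left(\tfrac{x-y}{t}\right).
\end{equation*}
Splitting $M=M'+M''$ and using this identity to extract the $M'$-portion of the exponent, the residual $M''$-weighted exponential is integrated in $z$ via the change of variables $u=t_1^{-E}(x-z)$ combined with Corollary \ref{cor:ExponentialofLegendreTransformIntegrable}, producing a factor $t_1^{\mu_H}$ (or by symmetry $t_2^{\mu_H}$); taking the minimum and using $\max(t_1,t_2)\ge t/2$ yields the stated bound.

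Once the reproducing estimate is in hand, the remaining steps are routine. Differentiating $Z-Z_0$ under the integral sign is justified by the improved singularity of $\Phi$ and delivers both the $(2\mathbf{m},\mathbf{v})$-regularity of $Z$ and the identity $(\partial_t+H_x)Z=0$. The initial condition $\lim_{t\downarrow 0}\int_{\mathbb{V}} Z(t,x,y)f(y)\,dy=f(x)$ for $f\in C_b(\mathbb{V})$ follows from the approximation-of-identity property of $Z_0$ (inherited from the constant-coefficient case via Proposition \ref{prop:CCEstimates}) together with the bound $|\int Z_0\ast\Phi|\le Ct^{\sigma}$. Finally, the heat-kernel estimate \eqref{eq:FundamentalSolution} is obtained by one more application of the reproducing estimate to the defining integral for $Z-Z_0$.
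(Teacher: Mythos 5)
Your overall architecture matches the paper's: a frozen-coefficient parametrix, the Levi remainder, the Volterra iteration $\Phi = \sum_{k} K^{(k)}$, and the Legendre--Fenchel reproducing inequality. Your observation that the subadditivity
\begin{equation*}
tR^{\#}\!\left(\tfrac{x-y}{t}\right)\le t_1 R^{\#}\!\left(\tfrac{x-z}{t_1}\right)+t_2 R^{\#}\!\left(\tfrac{z-y}{t_2}\right)
\end{equation*}
is exactly what the convolution estimate needs is correct; the paper derives the same inequality directly by splitting the supremum in the Legendre transform rather than via the $(f+g)^{\#}=f^{\#}\square g^{\#}$ identity, which is a shorter route to the same estimate (see \eqref{eq:KFiniteTerms2}).

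There is, however, a genuine gap in your closing paragraph. You assert that ``differentiating $Z-Z_0$ under the integral sign is justified by the improved singularity of $\Phi$,'' but the improved singularity $\Phi(s,z,y)=O(s^{-(\mu_H+1-\rho)})$ does not help with the singularity at the \emph{upper} endpoint $s=t$. For $|\beta:\mathbf{m}|=2$ one has $|D_{\mathbf{v}}^{\beta}G_p(t-s,x-z;z)|\lesssim (t-s)^{-(\mu_H+1)}\exp(-MR^{\#}((t-s)^{-E}(x-z)))$, so after integrating in $z$ the integrand behaves like $(t-s)^{-1}$ near $s=t$, which is not integrable regardless of how nice $\Phi$ is near $s=0$. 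Two additional ideas are required and neither appears in your proposal: (i) one must know that $\Phi$ is $\mathbf{v}$-H\"older continuous in its middle variable (the paper's Lemma \ref{holderphilemma}), so that after writing $\Phi(s,z,y)=(\Phi(s,z,y)-\Phi(s,x,y))+\Phi(s,x,y)$ the difference term gains a factor $(t-s)^{\sigma}$; and (ii) for the remaining term $\Phi(s,x,y)\int_{\mathbb{V}}D_{\mathbf{v}}^{\beta}G_p(t-s,x-z;z)\,dz$ one must exploit the cancellation $\int_{\mathbb{V}}D_{\mathbf{v}}^{\beta}G_p(t-s,w;y)\,dw=0$ for $\beta\neq 0$ at fixed frozen parameter $y$, and then control $\int_{\mathbb{V}}D_{\mathbf{v}}^{\beta}[G_p(t-s,x-z;z)-G_p(t-s,x-z;x)]\,dz$ by the H\"older continuity of $G_p$ in the frozen parameter (the paper's Lemma \ref{lem:LegendreHolderEstimate}). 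Without both ingredients the differentiation under the integral sign, and hence the $(2\mathbf{m},\mathbf{v})$-regularity of $Z$ and the identity $(\partial_t+H)Z=0$, do not follow.

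A smaller inconsistency: you define the parametrix via the full frozen operator $H(y)$ and call it ``positive-homogeneous,'' and your formula for the Levi remainder $K=\sum_{|\beta:\mathbf{m}|\le 2}(a_\beta(y)-a_\beta(x))D_{\mathbf{v}}^{\beta}Z_0$ presupposes freezing the lower-order terms as well. But $H(y)=H_p(y)+H_l(y)$ is not positive-homogeneous, so Proposition \ref{prop:CCEstimates} cannot be applied to it as stated. If instead you freeze only the principal part $H_p(y)$ (as the paper does with $G_p$), the remainder acquires the additional contribution $-\sum_{|\beta:\mathbf{m}|<2}a_\beta(x)D_{\mathbf{v}}^{\beta}G_p$; this is harmless (it still gains $t^{1-\eta}$ with $\eta<1$) but it is not a ``coefficient difference'' and must be accounted for in the estimate of $K$. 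Finally, the approximation-of-identity property of $Z_0$ is not a direct consequence of Proposition \ref{prop:CCEstimates}, because the frozen parameter coincides with the integration variable; one has to compare $G_p(t,x-y;y)$ with $G_p(t,x-y;x)$ and again invoke H\"older continuity in the frozen parameter.
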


\noindent We remark that, by definition, the fundamental solution $Z$ given by Theorem \ref{thm:FundamentalSolution} is $(2\mathbf{m,v})$-regular. Thus $Z$ is necessarily continuously differentiable in $t$ and has continuous spatial derivatives of all orders $\beta$ such that $|\beta:\mathbf{m}|\leq 2$.\\

\noindent As we previously mentioned, the result above is implied by the work of S. D. Eidelman for $2\vec{b}$-parabolic systems on $\mathbb{R}^d$ (where $\vec{b}=\mathbf{m}$) \cite{Eidelman1960,Eidelman2004}. Eidelman's systems, of the form \eqref{eq:2b-Parabolic}, are slightly more general than we have considered here, for their coefficients are also allowed to depend on $t$ (but in a uniformly H\"{o}lder continuous way). Admitting this $t$-dependence is a relatively straightforward matter and, for simplicity of presentation, we have not included it (see Remark \ref{rmk:Gp}). In this slightly more general situation, stated in $\mathbb{R}^d$ and in which $\mathbf{v}=\mathbf{e}$ is the standard Euclidean basis, Theorem 2.2 (p.79) \cite{Eidelman2004} guarantees the existence of a fundamental solution $Z(t,x,y)$ to \eqref{eq:2b-Parabolic}, which has the same regularity appearing in Theorem \ref{thm:FundamentalSolution} and satisfies 
\begin{equation}\label{eq:EidelmansEstimate}
|Z(t,x,y)|\leq \frac{C}{t^{1/(2m_1)+1/(2m_2)+\cdots+1/(2m_d)}}\exp\left(-M\sum_{k=1}^d\frac{|x_k-y_k|^{2m_k/(2m_k-1)}}{t^{1/(2m_k-1)}}\right)
\end{equation}
for $x,y\in\mathbb{R}^d$ and $0<t\leq T$ where $C$ and $M$ are positive constants. By an appeal to Corollary \ref{cor:LegendreCompareHomogeneousNorm}, we have $R^{\#}\asymp |\cdot|_{\mathbf{v}}^{\omega}$ and from this we see that the estimates \eqref{eq:FundamentalSolution} and \eqref{eq:EidelmansEstimate} are comparable.\\

\noindent In view of Corollary \ref{cor:HolderConsistent}, the hypothesis of Theorem \ref{thm:FundamentalSolution} concerning the coefficients of $H$ immediately imply the following a priori stronger condition:

\begin{hypothesis}\label{hypoth:HolderCoefficients}
There exists $\alpha\in\mathbb{I}_+^d$ which is consistent with $\mathbf{m}$ and for which the coefficients of $H$ are bounded and $\mathbf{v}$-H\"{o}lder continuous on $\mathbb{V}$ of order $\alpha$.
\end{hypothesis}

\subsection{Levi's Method}

\noindent In this subsection, we construct a fundamental solution to \eqref{eq:HeatEquation} under only the assumption that $H$, a uniformly $(2\mathbf{m,v})$-positive-semi-elliptic operator, satisfies Hypothesis \ref{hypoth:HolderCoefficients}. Henceforth, all statements include Hypothesis \ref{hypoth:HolderCoefficients} without explicit mention. We follow the famous method of E. E. Levi, c.f., \cite{Levi1907} as it was adopted for parabolic systems in \cite{Eidelman1969} and \cite{Friedman1964}. Although well-known, Levi's method is lengthy and tedious and we will break it into three steps. Let's motivate these steps by first discussing the heuristics of the method.\\

\noindent We start by considering the auxiliary equation

\begin{equation}\label{eq:FrozenCoeffHeat}
\big(\partial_t+\sum_{|\beta:\mathbf{m}|=2}a_{\beta}(y)D_{\mathbf{v}}^{\beta}\big)u=(\partial_t+H_p(y))u=0
\end{equation}
where $y\in\mathbb{V}$ is treated as a parameter. This is the so-called frozen-coefficient heat equation. As one easily checks, for each $y\in\mathbb{V}$,
\begin{equation*}
G_p(t,x;y):=\int_{\mathbb{V}^*}e^{-i\xi(x)}e^{-tP_p(y,\xi)}d\xi\hspace{.5cm}(x\in\mathbb{V},t>0)
\end{equation*}
solves \eqref{eq:FrozenCoeffHeat}. By the uniform semi-ellipticity of $H$, it is clear that $G_p(t,\cdot;y)\in \mathcal{S}(\mathbb{V})$ for $t>0$ and $y\in\mathbb{V}$. As we shall see, more is true: $G_p$ is an approximate identity in the sense that
\begin{equation*}
\lim_{t\downarrow 0}\int_{\mathbb{V}}G_p(t,x-y;y)f(y)\,dy=f(x)
\end{equation*}
for all $f\in C_b(\mathbb{V})$. Thus, it is reasonable to seek a fundamental solution to \eqref{eq:HeatEquation} of the form
\begin{eqnarray}\label{eq:FundSolution}\nonumber
Z(t,x,y)&=&G_p(t,x-y;y)+\int_0^t\int_{\mathbb{V}}G_p(t-s,x-z;z)\phi(s,z,y)dzds\\
&=&G_p(t,x-y;y)+W(t,x,y)
\end{eqnarray}
where $\phi$ is to be chosen to ensure that the correction term $W$ is $(2\mathbf{m,v})$-regular, accounts for the fact that  $G_p$ solves \eqref{eq:FrozenCoeffHeat} but not \eqref{eq:HeatEquation}, and is ``small enough'' as $t\rightarrow 0$ so that the approximate identity aspect of $Z$ is inherited directly from $G_p$.\\

\noindent Assuming for the moment that $W$ is sufficiently regular, let's apply the heat operator to \eqref{eq:FundSolution} with the goal of finding an appropriate $\phi$ to ensure that $Z$ is a solution to \eqref{eq:HeatEquation}. Putting
\begin{equation*}
K(t,x,y)=-(\partial_t+H)G_p(t,x-y;y),
\end{equation*}
we have formally,
\begin{eqnarray}\label{formaldifferentiation}\nonumber
(\partial_t+H)Z(t,x,y)&=&-K(t,x,y)+(\partial_t+H)\int_{0}^t\int_{\mathbb{V}}G_p(t-s,x-z;z)\phi(s,z,y)\,dz\,ds\\ \nonumber
&=&-K(t,x,y)+\lim_{s\uparrow t}\int_{\mathbb{V}}G_p(t-s,x-z;z)\phi(s,z,y)\,dz\\\nonumber
&&\hspace{3cm}-\int_0^t\int_{\mathbb{V}}-(\partial_t+H)G_p(t-s,x-z;z)\phi(s,z,y)\,dz\,ds\\
&=&-K(t,x,y)+\phi(t,x,y)-\int_{0}^t\int_{\mathbb{V}}K(t-s,x,z)\phi(s,z,y)\,dz\,ds
\end{eqnarray}
where we have made use of Leibniz' rule and our assertion that $G_p$ is an approximate identity. Thus, for $Z$ to satisfy \eqref{eq:HeatEquation}, $\phi$ must satisfy the integral equation
\begin{eqnarray}\label{eq:IntegralEquation}\nonumber
K(t,x,y)&=&\phi(t,x,y)-\int_0^{t}\int_{\mathbb{V}}K(t-s,x,z)\phi(s,z,y)\,dz\,ds\\
&=&\phi(t,x,y)-L(\phi)(t,x,y).
\end{eqnarray}
Viewing $L$ as a linear integral operator, \eqref{eq:IntegralEquation} is the equation $K=(I-L)\phi$ which has the solution
\begin{equation}\label{eq:PhiHeuristic}
\phi=\sum_{n=0}^{\infty} L^nK
\end{equation}
provided the series converges in an appropriate sense.\\

\noindent Taking the above as purely formal, our construction will proceed as follows: We first establish estimates for $G_p$ and show that $G_p$ is an approximate identity; this is Step 1. In Step 2, we will define $\phi$ by \eqref{eq:PhiHeuristic} and, after deducing some subtle estimates, show that $\phi$'s defining series converges whence \eqref{eq:IntegralEquation} is satisfied. Finally in Step $3$, we will make use of the estimates from Steps 1 and 2 to validate the formal calculation made in \eqref{formaldifferentiation}. Everything will be then pieced together to show that $Z$, defined by \eqref{eq:FundSolution}, is a fundamental solution to \eqref{eq:HeatEquation}. Our entire construction depends on obtaining precise estimates for $G_p$ and for this we will rely heavily on the homogeneity of $P_p$ and the Legendre-Fenchel transform of $R$.\\

\begin{remark}\label{rmk:Gp}
One can allow the coefficients of $H$ to also depend on $t$ in a uniformly continuous way, and Levi's method pushes though by instead taking $G_p$ as the solution to a frozen-coefficient initial value problem \cite{Eidelman1960,Eidelman2004}.\\
\end{remark}

\noindent\textbf{Step 1. Estimates for $G_p$ and its derivatives}

\noindent The lemma below is a basic building block used in our construction of a fundamental solution to \eqref{eq:HeatEquation} via Levi's method and it makes essential use of the uniform semi-ellipticity of $H$. We note however that the precise form of the constants obtained, as they depend on $k$ and $\beta$, are more detailed than needed for the method to work. Also, the partial differential operators $D_{\mathbf{v}}^{\beta}$ of the lemma are understood to act of the $x$ variable of $G_p(t,x;y)$.

\begin{lemma}\label{lem:LegendreEstimate}
There exist positive constants $M$ and $C_0$ and, for each multi-index $\beta$, a positive constant $C_{\beta}$ such that, for any $k\in\mathbb{N}$,
\begin{equation}\label{eq:LegendreEstimate1}
|\partial_t^kD^{\beta}_{\mathbf{v}}G_p(t,x;y)|\leq\frac{C_{\beta}C_0^k k!}{t^{\mu_H+k+|\beta:2\mathbf{m}|}}\exp\left(-tMR^{\#}\left(x/t\right)\right)
\end{equation}
for all $x,y\in\mathbb{V}$ and $t>0$.
\end{lemma}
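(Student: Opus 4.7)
The plan is to mirror the (as yet unproven) Proposition \ref{prop:CCEstimates}, which is exactly this estimate in the constant-coefficient case, while tracking that all constants depend only on data that is uniform in the parameter $y$. Starting from the Fourier representation
\[
\partial_t^k D_{\mathbf{v}}^\beta G_p(t,x;y) = (-1)^k \int_{\mathbb{V}^*} P_p(y,\xi)^k \xi^\beta e^{-i\xi(x)} e^{-tP_p(y,\xi)}\,d\xi
\]
(whose absolute convergence follows from $\Re P_p(y,\xi)\geq \delta R(\xi)$ combined with Corollary \ref{cor:ExponentialofLegendreTransformIntegrable}), I would rescale via $\xi = t^{-E^*}\eta$, valid since $E\in\Exp(H_p(y))$ for every $y$. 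This pulls out the factor $t^{-(\mu_H+k+|\beta:2\mathbf{m}|)}$, replaces $tP_p(y,\xi)$ by $P_p(y,\eta)$, and turns $x$ into $w:=t^{-E}x$. Because $I-E\in\Exp(R^\#)$ by Proposition \ref{prop:LegendreTransformProperties}, $R^\#(w) = tR^\#(x/t)$, and the claim reduces to the $y$-uniform estimate
\[
|J_{k,\beta}(w;y)| := \left|\int_{\mathbb{V}^*} P_p(y,\eta)^k \eta^\beta e^{-i\eta(w)} e^{-P_p(y,\eta)}\,d\eta\right| \leq C_\beta C_0^k k!\, e^{-M R^\#(w)}, \qquad w\in\mathbb{V}.
\]

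The Legendre-Fenchel factor is extracted by a complex-contour shift: for $\zeta\in\mathbb{V}^*$, Cauchy's theorem gives
\[
J_{k,\beta}(w;y) = e^{-\zeta(w)} \int_{\mathbb{V}^*} P_p(y,\eta-i\zeta)^k (\eta-i\zeta)^\beta e^{-i\eta(w)} e^{-P_p(y,\eta-i\zeta)}\,d\eta.
\]
This is legitimate because, after binomially expanding $P_p(y,\eta-is\zeta)$ and applying Lemma \ref{lem:Scaling} items \ref{item:Scaling2}--\ref{item:Scaling3} (with a sufficiently small $\varepsilon$) to absorb the mixed terms $\eta^\gamma \zeta^{\beta-\gamma}$, one obtains the $y$-uniform lower bound $\Re P_p(y,\eta-is\zeta) \geq (\delta/2)R(\eta) - CR(\zeta)$ for $s\in[0,1]$, which supplies sufficient decay throughout the connecting strip. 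I would then choose $\zeta$ to (approximately) maximize $\zeta(w) - CR(\zeta)$; the resulting prefactor $\exp(-(CR)^\#(w))$ is bounded by $\exp(-MR^\#(w))$ via Corollary \ref{cor:MovingConstants}.

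It remains to bound the shifted integral by $C_\beta C_0^k k!$. Lemma \ref{lem:Scaling} item \ref{item:Scaling2} furnishes $|P_p(y,\eta-i\zeta)| \leq C(R(\eta)+R(\zeta))$ with $C$ uniform in $y$ via $\|a_\beta\|_\infty$, hence $|P_p(y,\eta-i\zeta)|^k \leq C^k (R(\eta)+R(\zeta))^k$, and item \ref{item:Scaling1} handles $|(\eta-i\zeta)^\beta|$ analogously. Integrating the resulting polynomial against $e^{-(\delta/2)R(\eta)}$, together with the moment bound $\int R(\eta)^j e^{-(\delta/2)R(\eta)}\,d\eta \leq \tilde C \tilde C_0^j\, j!$ (a Gamma-function estimate obtainable by pushing forward to a one-dimensional radius parameter via $E^*$-homogeneity of $R$), produces the desired $k!$ growth; residual polynomial factors in $R(\zeta)$ are absorbed into a small fraction of the Legendre-Fenchel exponential by slightly enlarging $\zeta$, using the superlinear growth of $R^\#$ recorded in Proposition \ref{prop:LegendreTransformProperties}.

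The main obstacle I anticipate is the combinatorial bookkeeping of the last paragraph: verifying that the resulting double sum really collapses to $C_0^k k!$ rather than a weaker factorial, uniformly in $y$. If this becomes unwieldy, a cleaner route is to establish the $(k,\beta)=(0,0)$ estimate by the shift argument above and then extract time derivatives through the Cauchy integral $\partial_t^k G_p(t,x;y) = (k!/2\pi i)\oint_{|s-t|=t/2} G_p(s,x;y)(s-t)^{-k-1}\,ds$. This is valid in a sector $|\arg t|<\theta_0$ of analyticity whose opening depends only on the ratio $|P_p(y,\xi)|/\Re P_p(y,\xi)$, itself uniformly bounded by Lemma \ref{lem:Scaling} item \ref{item:Scaling2} together with uniform semi-ellipticity, so all constants remain $y$-uniform; spatial derivatives then follow from further Fourier-contour shifts of the same type.
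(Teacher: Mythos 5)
Your proposal follows the same overall architecture as the paper's proof: reduce to a $t$-independent quantity via the $\xi\mapsto t^{-E^*}\xi$ scaling, extract the Legendre-Fenchel exponential via a complex contour shift $\xi\mapsto\xi-i\nu$ justified by a uniform lower bound on $\Re P_p(y,\xi-i\nu)$ obtained from Lemma \ref{lem:Scaling}, and finish by minimizing over $\nu$ and invoking Corollary \ref{cor:MovingConstants} to replace $(cR)^\#$ by $MR^\#$. The one genuinely different step is how the factorial $k!$ is produced. The paper does it pointwise: since $|P_p(y,\xi-i\nu)|\le C(R(\xi)+R(\nu))$, it uses the elementary inequality $a^k/k!\le e^a$ to convert $|P_p|^k\le C^k(R(\xi)+R(\nu))^k$ into $|P_p|^k\le C_0^k\,k!\,e^{M_1(R(\xi)+R(\nu))}$, and then this extra exponential is absorbed by a margin already built into the real-part estimate ($-3M_1R(\xi)+MR(\nu)$), leaving $e^{-M_1R(\xi)}$ integrable and the $\nu$-term adding to the Legendre-Fenchel minimization. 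You instead keep the polynomial $(R(\eta)+R(\zeta))^k$, integrate term by term against $e^{-(\delta/2)R(\eta)}$ using a Gamma-type moment bound, and absorb the residual polynomial in $R(\zeta)$ afterward. This does work, and your calculation that the binomial sum collapses to $k!$-times-geometric is correct, but it is heavier than what's needed: your phrase ``absorbed by slightly enlarging $\zeta$'' is really just re-minimizing with an enlarged coefficient on $R(\zeta)$, which is exactly what Corollary \ref{cor:MovingConstants} does, so nothing is gained over the paper's cleaner pointwise trick. Your fallback (prove $(k,\beta)=(0,0)$ first, then get time derivatives from a Cauchy integral over a sector of analyticity whose opening is controlled by the uniform bound on $|P_p|/\Re P_p$) is also valid and is a standard device; the paper does not use it, preferring the direct pointwise estimate because it yields all derivative orders at once without separately establishing sectorial analyticity.
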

\noindent Before proving the lemma, let us note that $tR^{\#}(x/t)=R^{\#}(t^{-E}x)$ for all $t>0$ and $x\in\mathbb{V}$ in view of Proposition \ref{prop:LegendreTransformProperties}. Thus the estimate \eqref{eq:LegendreEstimate1} can be written equivalently as
\begin{equation}\label{eq:LegendreEstimate11}
|\partial_t^kD^{\beta}_{\mathbf{v}}G_p(t,x;y)|\leq\frac{C_{\beta}C_0^k k!}{t^{\mu_H+k+|\beta:2\mathbf{m}|}}\exp(-MR^{\#}(t^{-E}x))
\end{equation}
for $x,y\in\mathbb{V}$ and $t>0$. We will henceforth use these forms interchangeably and without explicit mention.
\begin{proof}
Let us first observe that, for each $x,y\in\mathbb{V}$ and $t>0$,
\begin{eqnarray*}
\partial_t^kD_{\mathbf{v}}^{\beta}G_p(t,x;y)&=&\int_{\mathbb{V}^*}(P_p(y,\xi))^k\xi^{\beta}e^{-i\xi(x)}e^{-tP_p(y,\xi)}\,d\xi\\
&=&\int_{\mathbb{V}^*}(P_p(y,t^{-E^*}\xi))^k(t^{-E^*}\xi)^{\beta}e^{-i\xi(t^{-E}x)}e^{-P_p(y,\xi)}t^{-\tr E}\,d\xi\\
&=&t^{-\mu_H-k-|\beta:2\mathbf{m}|}\int_{\mathbb{V}^*}(P_p(y,\xi))^k\xi^{\beta}e^{-i\xi(t^{-E}x)}e^{-P_p(y,\xi)}d\,\xi
\end{eqnarray*}
where we have used the homogeneity of $P_p$ with respect to $\{t^{E^*}\}$ and the fact that $\mu_H=\tr E$. Therefore
\begin{equation}\label{eq:LegendreEstimate2}
t^{\mu_H+k+|\beta:2\mathbf{m}|}(\partial_t^kD_{\mathbf{v}}^{\beta}G_p(t,\cdot\,;y))(t^{E}x)=\int_{\mathbb{V}^*}(P_p(y,\xi))^k\xi^{\beta}e^{-i\xi(x)}e^{-P_p(y,\xi)}d\xi
\end{equation}
for all $x,y\in\mathbb{V}$ and $t>0$. Thus, to establish \eqref{eq:LegendreEstimate1} (equivalently \eqref{eq:LegendreEstimate11}) it suffices to estimate the right hand side of \eqref{eq:LegendreEstimate2} which is independent of $t$.

The proof of the desired estimate requires making a complex change of variables and for this reason we will work with the complexification of $\mathbb{V}^*$, whose members are denoted by $z=\xi-i\nu$ for $\xi,\nu\in\mathbb{V}^*$; this space is isomorphic to $\mathbb{C}^d$. We claim that there are positive constants $C_0, M_1,M_2$ and, for each multi-index $\beta$, a positive constant $C_{\beta}$ such that, for each $k\in\mathbb{N}$,
\begin{equation}\label{eq:LegendreEstimate3}
|(P_p(y,\xi-i\nu))^k(\xi-i\nu)^{\beta}e^{-P_p(y,\xi-i\nu)}|\leq C_{\beta}C_0^k k!e^{-M_1 R(\xi)}e^{M_2R(\nu)} 
\end{equation}
for all $\xi,\nu\in\mathbb{V}^*$ and $y\in\mathbb{V}$. Let us first observe that
\begin{equation*}
P_p(y,\xi-i\nu)=P_p(y,\xi)+\sum_{|\beta:\mathbf{m}|=2}\sum_{\gamma<\beta}a_{\beta,\gamma}\xi^{\gamma}(-i\nu)^{\beta-\gamma}
\end{equation*}
for all $z,\nu\in\mathbb{V}^*$ and $y\in\mathbb{V}$, where $a_{\beta,\gamma}$ are bounded functions of $y$ arising from the coefficients of $H$ and the coefficients of the multinomial expansion. By virtue of the uniform semi-ellipticity of $H$ and the boundedness of the coefficients, we have
\begin{equation*}
-\Re P_p(y,\xi-i\nu)\leq -\delta R(\xi)+C\sum_{|\beta:\mathbf{m}|=2}\sum_{\gamma<\beta}|\xi^{\gamma}\nu^{\beta-\gamma}|
\end{equation*}
for all $\xi,\nu\in\mathbb{V}^*$ and $y\in\mathbb{V}$ where $C$ is a positive constant. By applying Lemma \ref{lem:Scaling} to each term $|\xi^{\gamma}\nu^{\beta-\gamma}|$ in the summation, we can find a positive constant $M$ for which the entire summation is bounded above by $\delta/2 R(\xi)+MR(\nu)$ for all $\xi,\nu\in\mathbb{V}^*$.  By setting $M_1=\delta/6$, we have
\begin{equation}\label{eq:LegendreEstimate4}
-\Re P_p(y,\xi-i\nu)\leq -3M_1 R(\xi)+M R(\nu)
\end{equation}
for all $\xi,\nu\in\mathbb{V}^*$ and $y\in\mathbb{V}$. By analogous reasoning (making use of item 1 of Lemma \ref{lem:Scaling}), there exists a positive constant $C$ for which
\begin{equation*}
|P_p(y,\xi-i\nu)|\leq C (R(\xi)+ R(\nu))
\end{equation*}
for all $\xi,\nu\in\mathbb{V}^*$ and $y\in\mathbb{V}$. Thus, for any $k\in\mathbb{N}$,
\begin{equation}\label{eq:LegendreEstimate5}
|P_p(y,\xi-i\nu)|^k\leq\frac{C^k k!}{M_1^k}\frac{(M_1 (R(\xi)+R(\nu)))^k}{k!}\leq C_0^k k!e^{M_1 (R(\xi)+R(\nu))}
\end{equation}
for all $\xi,\nu\in\mathbb{V}^*$ and $y\in\mathbb{V}$ where $C_0=C/M_1$. Finally, for each multi-index $\beta$, another application of Lemma \ref{lem:Scaling} gives $C'>0$ for which
\begin{equation*}
|(\xi-i\nu)^{\beta}|\leq|\xi^{\beta}|+|\nu^{\beta}|+\sum_{0<\gamma<\beta}c_{\gamma,\beta}|\xi^{\gamma}\nu^{\beta-\gamma}|\leq C'\left((R(\xi)+R(\nu))^n+1\right)
\end{equation*}
for all $\xi,\nu\in\mathbb{V}^*$ where $n\in\mathbb{N}$ has been chosen to satisfy $|\beta:2n\mathbf{m}|<1$. Consequently, there is a positive constant $C_{\beta}$ for which
\begin{equation}\label{eq:LegendreEstimate6}
|(\xi-i\nu)^{\beta}|\leq C_\beta e^{M_1(R(\xi)+R(\nu))}
\end{equation}
for all $\xi,\nu\in\mathbb{V}^*$. Upon combining \eqref{eq:LegendreEstimate4}, \eqref{eq:LegendreEstimate5} and \eqref{eq:LegendreEstimate6}, we obtain the inequality
\begin{equation*}
\left|P_p(y,\xi-i\nu)^k(\xi-i\nu)^{\beta}e^{-P_p(y,\xi-i\nu)}\right|\leq C_{\beta}C_0^k k! e^{-M_1 R(\xi)+(M+2M_1)R(\nu)}
\end{equation*}
which holds for all $\xi,\nu\in\mathbb{V}^*$ and $y\in\mathbb{V}$. Upon paying careful attention to the way in which our constants were chosen, we observe the claim is established by setting $M_2=M+2M_1$.

From the claim above, it follows that, for any $\nu\in\mathbb{V}^*$ and $y\in\mathbb{V}$, the following change of coordinates by means of a $\mathbb{C}^d$ contour integral is justified:
\begin{eqnarray*}
 \int_{\mathbb{V}^*}(P_p(y,\xi))^k\xi^{\beta}e^{-i\xi(x)}e^{-P_p(y,\xi)}\,d\xi &=&\int_{\xi\in\mathbb{V}^*}(P_p(y,\xi-i\nu)^k(\xi-i\nu)^{\beta}e^{-i(\xi-i\nu)(x)}e^{-P_p(y,\xi-i\nu)}\,d\xi\\
 &=&e^{-\nu(x)}\int_{\xi\in\mathbb{V}^*}(P_p(y,\xi-i\nu)^k(\xi-i\nu)^{\beta}e^{-i\xi(x)}e^{-P_p(y,\xi-i\nu)}\,d\xi.
\end{eqnarray*}
Thus, by virtue of the estimate \eqref{eq:LegendreEstimate3},
\begin{eqnarray*}
\left|\int_{\mathbb{V}^*}(P_p(y,\xi))^k\xi^{\beta}e^{-i\xi(x)}e^{-P_p(y,\xi)}\,d\xi\right |&\leq& C_\beta C_0^k k! e^{-\nu(x)}e^{M_2R(\nu)}\int_{\mathbb{V}^*}e^{-M_1 R(\xi)}\,d\xi\\
&\leq & C_{\beta}C_0^k k! e^{-(\nu(x)-M_2 R(\nu))}
\end{eqnarray*}
for all $x,y\in\mathbb{V}$ and $\nu\in\mathbb{V}^*$ where we have absorbed the integral of $\exp(-M_1R(\xi))$ into $C_{\beta}$. Upon minimizing with respect to $\nu\in\mathbb{V}^*$, we have
\begin{equation}\label{eq:LegendreEstimate7}
\left|\int_{\mathbb{V}^*}(P_p(y,\xi))^k\xi^{\beta}e^{-i\xi(x)}e^{-P_p(y,\xi)}d\xi\right |\leq C_{\beta}C_0^k k!e^{-(M_2R)^{\#}(x)}\leq C_{\beta}C_0^k k!e^{-MR^{\#}(x)}
\end{equation}
for all $x$ and $y\in\mathbb{V}$ because
\begin{equation*}
-(M_2R)^{\#}(x)=-\sup_{\nu}\{\nu(x)-M_2 R(\nu)\}=\inf_{\nu}\{-(\nu(x)-M_2R(\nu))\};
\end{equation*}
in this we see the natural appearance of the Legendre-Fenchel transform. The replacement of $(M_2 R)^{\#}(x)$ by $MR^{\#}(x)$ is done using Corollary \ref{cor:MovingConstants} and, as required, the constant $M$ is independent of $k$ and $\beta$. Upon combining \eqref{eq:LegendreEstimate2} and \eqref{eq:LegendreEstimate7}, we obtain the desired estimate \eqref{eq:LegendreEstimate1}.
\end{proof}

\noindent As a simple corollary to the lemma, we obtain Proposition \ref{prop:CCEstimates}.

\begin{proof}[Proof of Proposition \ref{prop:CCEstimates}]
Given a positive-homogeneous operator $\Lambda$, we invoke Proposition \ref{prop:OperatorRepresentation} to obtain $\mathbf{v}$ and $\mathbf{m}$ for which $\Lambda=\sum_{|\beta:\mathbf{m}|=2}a_{\beta}D_\mathbf{v}^{\beta}$. In other words, $\Lambda$ is an $(2\mathbf{m,v})$-positive-semi-elliptic operator which consists only of its principal part. Consequently, the heat kernel $K_{\Lambda}$ satisfies $K_{\Lambda}^t(x)=G_p(t,x;0)$ for all $x\in\mathbb{V}$ and $t>0$ and so we immediately obtain the estimate \eqref{eq:CCDerivativeEstimate} from the lemma. 
\end{proof}

\noindent Making use of Hypothesis \ref{hypoth:HolderCoefficients}, a similar argument to that given in the proof of Lemma \ref{lem:LegendreEstimate} yields the following lemma.

\begin{lemma}\label{lem:LegendreHolderEstimate}
There is a positive constant $M$ and, to each multi-index $\beta$, a positive constant $C_{\beta}$ such that
\begin{equation*}
|D^{\beta}_\mathbf{v}[G_p(t,x;y+h)-G_p(t,x;y)]|\leq C_{\beta} t^{-(\mu_H+|\beta:2\mathbf{m}|)}|h|_{\mathbf{v}}^{\alpha}\exp(-tMR^{\#}(x/t))
\end{equation*}
for all $t>0$, $x,y,h\in\mathbb{V}$. Here, in view of Hypothesis \ref{hypoth:HolderCoefficients}, $\alpha$ is the $\mathbf{v}$-H\"{o}lder continuity exponent for the coefficients of $H$.
\end{lemma}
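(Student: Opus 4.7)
The plan is to mirror the proof of Lemma \ref{lem:LegendreEstimate}, with the H\"older continuity of the coefficients supplying the extra factor $|h|_{\mathbf{v}}^{\alpha}$. First I would write the difference as a Fourier integral and apply the fundamental theorem of calculus to the exponential difference:
\begin{equation*}
D_{\mathbf{v}}^{\beta}\bigl[G_p(t,x;y+h) - G_p(t,x;y)\bigr] = \int_{\mathbb{V}^*} \xi^{\beta} e^{-i\xi(x)} \int_0^1 -t\bigl[P_p(y+h,\xi) - P_p(y,\xi)\bigr] e^{-tP_p(y+sh,\xi)} \, ds \, d\xi.
\end{equation*}
Exactly as in the derivation of \eqref{eq:LegendreEstimate2}, I would perform the change of variables $\xi \mapsto t^{-E^*}\xi$, using the homogeneity of $P_p$ with respect to $\{t^{E^*}\}$ (the factor $t$ in front of the bracket cancels with the $t^{-1}$ coming from the homogeneous difference, and $d\xi$ contributes $t^{-\mu_H}$ while $\xi^\beta$ contributes $t^{-|\beta:2\mathbf{m}|}$). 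This reduces the claim to showing that
\begin{equation*}
I(x) := \int_0^1 \int_{\mathbb{V}^*} \xi^{\beta}\bigl[P_p(y,\xi) - P_p(y+h,\xi)\bigr] e^{-i\xi(x)} e^{-P_p(y+sh,\xi)} \, d\xi \, ds
\end{equation*}
satisfies $|I(t^{-E}x)| \leq C_\beta\, |h|_{\mathbf{v}}^{\alpha}\, e^{-M R^{\#}(t^{-E}x)}$, after which the identity $R^{\#}(t^{-E}x) = tR^{\#}(x/t)$ from Proposition \ref{prop:LegendreTransformProperties} produces the stated form.

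To estimate $I$, I would perform the same $\mathbb{C}^d$ contour shift $\xi \mapsto \xi - i\nu$ used in Lemma \ref{lem:LegendreEstimate}, which is justified by exactly the same decay estimates. Three ingredients are then needed: (i) the bound $|(\xi - i\nu)^\beta| \leq C_\beta e^{M_1(R(\xi) + R(\nu))}$ from \eqref{eq:LegendreEstimate6}; (ii) the uniform-in-$s$ exponential decay $|e^{-P_p(y+sh, \xi - i\nu)}| \leq e^{-3M_1 R(\xi) + M R(\nu)}$, which is just \eqref{eq:LegendreEstimate4} applied at the point $y+sh$ and uses only the uniform semi-ellipticity of $H$ together with the boundedness of its coefficients; and (iii) the new H\"older-difference estimate
\begin{equation*}
\bigl|P_p(y, \xi - i\nu) - P_p(y+h, \xi - i\nu)\bigr| \leq C\, |h|_{\mathbf{v}}^{\alpha}\bigl(R(\xi) + R(\nu)\bigr),
\end{equation*}
which follows by expanding $(\xi - i\nu)^\beta$ via the multinomial theorem, applying Item 2 of Lemma \ref{lem:Scaling} termwise to each $|\xi^\gamma \nu^{\beta-\gamma}|$ with $|\beta:\mathbf{m}|=2$, and then invoking Hypothesis \ref{hypoth:HolderCoefficients} to bound $|a_\beta(y+h) - a_\beta(y)| \leq C|h|_{\mathbf{v}}^{\alpha}$ on each coefficient difference.

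Combining (i)--(iii) bounds the shifted integrand by $C_\beta\, |h|_{\mathbf{v}}^{\alpha}\, (R(\xi) + R(\nu))\, e^{-M_1 R(\xi) + M_2 R(\nu)}$; the polynomial factor $(R(\xi) + R(\nu))$ is absorbed into the exponentials at cost of constants, after which integrability in $\xi$ follows from Corollary \ref{cor:ExponentialofLegendreTransformIntegrable}. Extracting the prefactor $e^{-\nu(x)}$ produced by the contour shift and minimizing the remaining bound $C_\beta\, |h|_{\mathbf{v}}^{\alpha}\, e^{-(\nu(x) - M_2' R(\nu))}$ over $\nu \in \mathbb{V}^*$ yields $e^{-(M_2' R)^{\#}(x)} \asymp e^{-M R^{\#}(x)}$ by Corollary \ref{cor:MovingConstants}, exactly as at the end of the proof of Lemma \ref{lem:LegendreEstimate}. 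Restoring the scaling gives the claim.

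The main obstacle is bookkeeping: one must verify that the H\"older-perturbed exponential $e^{-P_p(y+sh, \xi - i\nu)}$ retains, uniformly in $s \in \mathbb{I}$, the same decay rate used in \eqref{eq:LegendreEstimate4} — this is guaranteed because uniform semi-ellipticity provides the lower bound $\Re P_p(y',\xi) \geq \delta R(\xi)$ for \emph{every} $y' \in \mathbb{V}$ — and that the linear factor $(R(\xi)+R(\nu))$ arising from the coefficient difference does not spoil the final minimization step. Both points are handled precisely as in the proof of Lemma \ref{lem:LegendreEstimate}, so the argument is a controlled perturbation of that one.
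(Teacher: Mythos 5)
Your overall strategy is correct and is surely what the paper has in mind when it says Lemma \ref{lem:LegendreHolderEstimate} follows from ``a similar argument'' to Lemma \ref{lem:LegendreEstimate}: rescale, shift the contour, and use the uniform semi-ellipticity for the exponential decay and H\"older continuity of the coefficients for the $|h|_{\mathbf{v}}^{\alpha}$ factor. Ingredients (i), (ii), (iii) and the absorption and minimization at the end are all sound.

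There is, however, one genuine error in the first display. You label it an application of the fundamental theorem of calculus and write the inner integrand as $-t\bigl[P_p(y+h,\xi)-P_p(y,\xi)\bigr]e^{-tP_p(y+sh,\xi)}$. This is not an identity. FTC applied to $s\mapsto e^{-tP_p(y+sh,\xi)}$ would produce the $s$-derivative $\partial_s P_p(y+sh,\xi)$ in place of your bracket, and that derivative does not exist under Hypothesis \ref{hypoth:HolderCoefficients}; the coefficients are only $\mathbf{v}$-H\"older continuous, not differentiable in $y$. The identity you actually want is the elementary interpolation formula $e^{A}-e^{B}=(A-B)\int_0^1 e^{sA+(1-s)B}\,ds$ with $A=-tP_p(y+h,\xi)$ and $B=-tP_p(y,\xi)$, which puts the \emph{convex combination} $-t\bigl[(1-s)P_p(y,\xi)+sP_p(y+h,\xi)\bigr]$ in the exponent rather than $-tP_p(y+sh,\xi)$. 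Once this is corrected, your ingredient (ii) still goes through verbatim: since the estimate \eqref{eq:LegendreEstimate4} holds at every base point (uniform semi-ellipticity plus boundedness of coefficients), the same lower bound on the real part holds at both $y$ and $y+h$, hence also for the convex combination, uniformly in $s\in\mathbb{I}$. So the conclusion is unaffected, but the derivation as written would not survive scrutiny.

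One smaller point: the integrability over $\xi$ after the shift uses $\int_{\mathbb{V}^*}e^{-M_1' R(\xi)}\,d\xi<\infty$, which follows because $R$ is a positive-definite polynomial (cf.\ Lemma \ref{lem:PtoInfty} and its use inside the proof of Lemma \ref{lem:LegendreEstimate}); Corollary \ref{cor:ExponentialofLegendreTransformIntegrable} concerns $e^{-\epsilon R^{\#}}$ on $\mathbb{V}$, not $e^{-\epsilon R}$ on $\mathbb{V}^*$, so the citation there is slightly off even though the fact you need is true.
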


\begin{lemma}\label{lem:ApproximateIdentity}
Suppose that $g\in C_b((t_0,T]\times\mathbb{V})$ where $0\leq t_0<T<\infty$. Then, on any compact set $Q\subseteq (t_0,T]\times\mathbb{V}$,
\begin{equation*}
\int_{\mathbb{V}}G_p(t,x-y;y)g(s-t,y)\,dy\rightarrow g(s,x)
\end{equation*}
uniformly on $Q$ as $t\rightarrow 0$. In particular, for any $f\in C_b(\mathbb{V})$,
\begin{equation*}
\int_{\mathbb{V}}G_p(t,x-y;y)f(y)\,dy\rightarrow f(x)
\end{equation*}
uniformly on all compact subsets of $\mathbb{V}$ as $t\rightarrow 0$. 
\end{lemma}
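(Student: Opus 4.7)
My approach is the classical approximate-identity argument, with care taken because the $y$ variable in $G_p(t, x-y; y)$ appears both as the integration variable and as the freezing point. A first useful observation is the scaling identity
\[
G_p(t, x; y) = t^{-\mu_H} G_p(1, t^{-E}x; y),
\]
obtained by the substitution $\eta = t^{E^*}\xi$ in the definition of $G_p$ together with $P_p(y, t^{-E^*}\eta) = t^{-1}P_p(y, \eta)$. Applying Lemma \ref{lem:LegendreEstimate} at $t=1$ gives $|G_p(1, z; y)| \leq C e^{-M R^{\#}(z)}$ uniformly in $y$, and this majorant is integrable by Corollary \ref{cor:ExponentialofLegendreTransformIntegrable}. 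Moreover, $\int_{\mathbb{V}} G_p(1, z; y)\,dz = \mathcal{F}(G_p(1,\cdot;y))(0) = e^{-P_p(y, 0)} = 1$, so $G_p(1, \cdot; y)$ is a complex-valued kernel of unit mass for each $y$.

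Write
\[
\int_{\mathbb{V}} G_p(t, x-y; y)\, g(s-t, y)\,dy - g(s, x) = I_1(t, s, x) + g(s, x)\, I_2(t, x),
\]
where $I_1 = \int G_p(t, x-y; y)\, [g(s-t, y) - g(s, x)]\,dy$ and $I_2 = \int G_p(t, x-y; y)\,dy - 1$. The change of variables $z = t^{-E}(x-y)$, whose Jacobian equals $t^{\mu_H}$, combined with the scaling above and the unit-mass identity for $G_p(1, \cdot; x)$, yields
\[
I_2(t, x) = \int_{\mathbb{V}} \bigl[ G_p(1, z; x - t^E z) - G_p(1, z; x) \bigr]\,dz.
\]
Lemma \ref{lem:LegendreHolderEstimate}, applied with $\beta = 0$, $t = 1$, and increment $h = -t^E z$, then gives the pointwise bound $|G_p(1, z; x - t^E z) - G_p(1, z; x)| \leq C\, |t^E z|_{\mathbf{v}}^{\alpha}\, e^{-M R^{\#}(z)}$. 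For $0 < t \leq 1$ we have $|t^E z|_{\mathbf{v}}^{\alpha} \leq |z|_{\mathbf{v}}^{\alpha}$, so the integrand is dominated by an integrable function of $z$ independent of $t$ and $x$; dominated convergence then gives $I_2(t, x) \to 0$ as $t \to 0$, uniformly on compact sets of $x$.

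For $I_1$, I will split the integral at some radius $\rho > 0$ in a fixed norm on $\mathbb{V}$. On the near region $\{|x - y| \leq \rho\}$, the continuity and boundedness of $g$ together with the compactness of $Q$ provide uniform continuity of $g$ on a bounded neighborhood of $Q$, so that $|g(s-t, y) - g(s, x)| < \varepsilon$ for $t$ and $\rho$ sufficiently small, uniformly in $(s, x) \in Q$. Combined with the uniform mass bound $\int_{\mathbb{V}} |G_p(t, x-y; y)|\,dy \leq C$ (obtained by the same change of variables), this yields a near-region contribution bounded by $C \varepsilon$. On the far region $\{|x - y| > \rho\}$, I will bound $|g(s-t, y) - g(s, x)| \leq 2 \|g\|_{\infty}$ and apply $z = t^{-E}(x-y)$ to reduce the estimate to $2 \|g\|_{\infty} \int_{\{|t^E z| > \rho\}} C e^{-M R^{\#}(z)}\,dz$; since the eigenvalues of $t^E$ are $t^{1/(2m_k)} \to 0$, the region $\{z : |t^E z| > \rho\}$ escapes to infinity as $t \to 0$, and dominated convergence drives this contribution to zero uniformly in $x$. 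The ``in particular'' statement follows by taking $g(s, y) = f(y)$ independent of $s$ together with $Q = \{s_0\} \times K$ for any compact $K \subseteq \mathbb{V}$. The principal obstacle is the coupling of $y$ as integration variable and as freezing point; the substitution $z = t^{-E}(x-y)$ disentangles the two, after which Lemma \ref{lem:LegendreHolderEstimate} controls the residual dependence on the freezing point and the argument collapses to the standard approximate-identity template.
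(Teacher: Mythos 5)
Your decomposition is different from the paper's but equally valid, and the proof is correct. The paper first freezes the coefficient at $x$, writing the integral as $\int G_p(t,x-y;x)g(s-t,y)\,dy + \int[G_p(t,x-y;y)-G_p(t,x-y;x)]g(s-t,y)\,dy$, so that the first term involves a genuine convolution kernel (with exact unit mass, since the freezing variable matches the point $x$), while the correction term carries all the coupling between the integration variable and the freezing point and is killed via Lemmas \ref{lem:LegendreSubscale} and \ref{lem:LegendreHolderEstimate}. You instead subtract $g(s,x)$ directly, producing the ``standard approximate-identity'' term $I_1 = \int G_p(t,x-y;y)[g(s-t,y)-g(s,x)]\,dy$ plus a mass-defect term $g(s,x)\bigl(\int G_p(t,x-y;y)\,dy - 1\bigr)$; the defect is then controlled by comparing with the exact unit mass of $G_p(1,\cdot;x)$ via Lemma \ref{lem:LegendreHolderEstimate}. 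Both routes isolate and dispatch the coupling issue with the same two ingredients (uniform decay from Lemma \ref{lem:LegendreEstimate}, H\"{o}lder control from Lemma \ref{lem:LegendreHolderEstimate}); the paper's version handles the coupling before the continuity argument, yours handles it after. Your near/far split is performed at the original $y$-scale and then rescaled, whereas the paper splits after the change of variables $z=t^{-E}(x-y)$; this is cosmetic. One small point worth making explicit in your write-up: for $I_1$ the near-region bound relies on $\int_{\mathbb{V}}|G_p(t,x-y;y)|\,dy\leq C$ uniformly, which holds because the bound of Lemma \ref{lem:LegendreEstimate} is uniform in the freezing variable; and for $I_2$, dominated convergence is fine, but the cleaner route is to observe $|t^E z|_{\mathbf{v}}^{\alpha}\leq t^{\sigma'}|z|_{\mathbf{v}}^{\alpha}$ for some $\sigma'>0$ (essentially Lemma \ref{lem:LegendreSubscale}), giving an explicit rate $|I_2(t,x)|\leq Ct^{\sigma'}$ uniform in $x$, which the later steps of Levi's method will in any case require.
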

\begin{proof}
Let $Q$ be a compact subset of $(t_0,T]\times\mathbb{V}$ and write
\begin{eqnarray*}
\lefteqn{\int_{\mathbb{V}} G_p(t,x-y;y)g(s-t,y)\,dy}\\
&=&\int_{\mathbb{V}}G_p(t,x-y;x)g(s-t,y)\,dy
+\int_{\mathbb{V}}[G_p(t,x-y;y)-G_p(t,x-y;x)]g(s-t,y)\,dy\\
&:=&I_t^{(1)}(s,x)+I_t^{(2)}(s,x).
\end{eqnarray*}
Let $\epsilon>0$ and, in view of Corollary \ref{cor:ExponentialofLegendreTransformIntegrable}, let $K$ be a compact subset of $\mathbb{V}$ for which
\begin{equation*}
\int_{\mathbb{V}\setminus K}\exp(-MR^{\#}(z))\,dz<\epsilon
\end{equation*} 
where the constant $M$ is that given in \eqref{eq:LegendreEstimate1} of Lemma \ref{lem:LegendreEstimate}. Using the continuity of $g$, we have for sufficiently small $t>0$,
\begin{equation*}
\sup_{\substack{(s,x)\in Q\\z\in K}}|g(s-t,x-t^{E}z)-g(s,x)|<\epsilon.
\end{equation*}
We note that, for any $t>0$ and $x\in\mathbb{V}$,
\begin{equation*}
\int_{\mathbb{V}}G_p(t,x-y;x)\,dy=e^{-tP_p(x,\xi)}\Big\vert_{\xi=0}=1.
\end{equation*}
Appealing to Lemma \ref{lem:LegendreEstimate} we have, for any $(s,x)\in Q$,
\begin{eqnarray*}
|I^{(1)}_t(s,x)-g(s,x)|&\leq&\Big|\int_{\mathbb{V}}G_p(t,x-y;x)(g(s-t,y)-g(s,x))\,dy\Big|\\
&\leq&\int_{\mathbb{V}}|G_p(1,z;x)(g(s-t,x-t^{E}z)-g(s,x))|\,dz\\
&\leq& 2\|g\|_{\infty}C\int_{\mathbb{V}\setminus K}\exp(-MR^{\#}(z))\,dz\\
&&\hspace{3cm}+C\int_{K}\exp(-MR^{\#}(z))|(g(s-t,x-t^{E}z)-g(s,x))|\,dz\\
&\leq& \epsilon C\left(2\|g\|_{\infty}+\|e^{-MR^{\#}}\|_1\right) ;
\end{eqnarray*}
here we have made the change of variables: $y\mapsto t^{E}(x-y)$ and used the homogeneity of $P_p$ to see that $t^{\mu_H}G_p(t,t^{E}z;x)=G_p(1,z;x)$. Therefore $I^{(1)}_t(s,x)\rightarrow g(s,x)$ uniformly on $Q$ as $t\rightarrow 0$. 

Let us now consider $I^{(2)}$. With the help of Lemmas \ref{lem:LegendreSubscale} and \ref{lem:LegendreHolderEstimate} and by making similar arguments to those above we have
\begin{eqnarray*}
|I_t^{(2)}(s,x)|&\leq& C\|g\|_{\infty}\int_{\mathbb{V}}t^{-\mu_H}|x-y|_{\mathbf{v}}^{\alpha}\exp(-MR^{\#}(t^{-E}(x-y))\,dy\\
&\leq &\|g\|_{\infty} C t^{\sigma}\int_{\mathbb{V}} t^{-\tr E}(R^{\#}(t^{-E}(x-y)))^{\theta}\exp(-MR^{\#}(t^{-E}(x-y)))\,dy\\
&\leq&\|g\|_{\infty}Ct^{\sigma}\int_{\mathbb{V}}(R^{\#}(x))^{\theta}\exp(-MR^{\#}(z))\,dz\leq \|g\|_{\infty}C't^{\sigma}
\end{eqnarray*}
for all $s\in(t_0, T]$, $0<t<s-t_0$ and $x\in\mathbb{V}$; here $0<\sigma<1$. Consequently, $I_t^{(2)}(s,x)\rightarrow 0$ uniformly on $Q$ as $t\rightarrow 0$ and the lemma is proved.
\end{proof}

\noindent Combining the results of Lemmas \ref{lem:LegendreEstimate} and \ref{lem:ApproximateIdentity} yields at once:

\begin{corollary}\label{frozenfundsolcor}
For each $y\in\mathbb{V}$, $G_p(\cdot,\cdot-y;y)$ is a fundamental solution to \eqref{eq:FrozenCoeffHeat}.
\end{corollary}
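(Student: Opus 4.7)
The plan is to verify directly the two properties in the definition of fundamental solution for the map $Z(t,x,y):=G_p(t,x-y;y)$, fixing $y\in\mathbb{V}$ as a parameter throughout and using the two preceding lemmas as the main engines.

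First I would establish $(2\mathbf{m,v})$-regularity together with the fact that $Z(\cdot,\cdot,y)$ solves \eqref{eq:FrozenCoeffHeat}. Lemma \ref{lem:LegendreEstimate} provides pointwise Gaussian-type bounds for $\partial_t^k D_{\mathbf{v}}^{\beta} G_p(t,x;y)$ that are locally uniform in $(t,x)$ on $(0,T]\times\mathbb{V}$ for every $k\in\mathbb{N}$ and every multi-index $\beta$. These bounds, combined with the analogous integrability estimates for the integrands $(P_p(y,\xi))^k\xi^{\beta}e^{-i\xi(x)}e^{-tP_p(y,\xi)}$ established inside the proof of that lemma (via the complex contour shift), justify differentiation under the integral sign. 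In particular $\partial_t G_p$ and $D_{\mathbf{v}}^{\beta}G_p$ exist and are continuous for every $\beta$ with $|\beta:\mathbf{m}|\leq 2$, so $(t,x)\mapsto G_p(t,x-y;y)$ is $(2\mathbf{m,v})$-regular on $(0,T)\times\mathbb{V}$. Applying $\partial_t+H_p(y)$ inside the integral and using that $\xi\mapsto -P_p(y,\xi)+P_p(y,\xi)=0$ is the symbolic identity corresponding to the frozen operator, we obtain
\begin{equation*}
(\partial_t+H_p(y))G_p(t,x-y;y)=\int_{\mathbb{V}^*}e^{-i\xi(x-y)}\bigl(-P_p(y,\xi)+P_p(y,\xi)\bigr)e^{-tP_p(y,\xi)}\,d\xi=0
\end{equation*}
for all $t>0$ and $x\in\mathbb{V}$, which is exactly \eqref{eq:FrozenCoeffHeat}.

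For the initial-condition property, let $f\in C_b(\mathbb{V})$. Applying Lemma \ref{lem:ApproximateIdentity} to the time-independent function $g(s,x):=f(x)$, viewed as an element of $C_b((t_0,T]\times\mathbb{V})$ for any $t_0<T$, yields
\begin{equation*}
\int_{\mathbb{V}}G_p(t,x-y;y)f(y)\,dy\longrightarrow f(x)
\end{equation*}
as $t\downarrow 0$, uniformly on compact subsets of $\mathbb{V}$, and in particular pointwise for every $x\in\mathbb{V}$. This is precisely the second defining property of a fundamental solution.

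No step here is substantively hard: the two lemmas already do all of the analytic work. The only point demanding care is verifying that the integrand bounds from the proof of Lemma \ref{lem:LegendreEstimate} (namely the estimate $|(P_p(y,\xi))^k\xi^\beta e^{-tP_p(y,\xi)}|\leq C_{\beta,k,t}e^{-M_1 R(\xi)}$ uniformly for $(t',x')$ in a neighborhood of $(t,x)$) are strong enough to apply dominated convergence and legitimize interchange of derivative and integral; since $R$ grows as a positive-definite polynomial and $e^{-M_1 R}\in L^1(\mathbb{V}^*)$ by Corollary \ref{cor:ExponentialofLegendreTransformIntegrable} (with $Q\equiv 1$), this is routine, and the corollary follows.
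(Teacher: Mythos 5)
Your proof is correct and follows the same route the paper takes, which is simply to combine the regularity and decay estimates of Lemma~\ref{lem:LegendreEstimate} with the approximate-identity statement of Lemma~\ref{lem:ApproximateIdentity}; the paper records the corollary as an immediate consequence of those two lemmas and you have merely spelled out the routine details (differentiation under the integral and the symbolic cancellation $-P_p+P_p=0$). One small slip: the integrability of $e^{-M_1 R}$ over $\mathbb{V}^*$ does not follow from Corollary~\ref{cor:ExponentialofLegendreTransformIntegrable} (which concerns $R^{\#}$ on $\mathbb{V}$), but rather from the fact that $R$ is a positive-definite polynomial, as is used implicitly in the proof of Lemma~\ref{lem:LegendreEstimate}.
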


\noindent\textbf{Step 2. Construction of $\phi$ and the integral equation}\\

\noindent For $t>0$ and $x,y\in\mathbb{V}$, put
\begin{eqnarray*}
K(t,x,y)&=&-(\partial_t+H)G_p(t,x-y;y)\\
&=&\big (H_p(y)-H\big)G_p(t,x-y;y)\\
&=&\int_{\mathbb{V}^*}e^{-i\xi(x-y)}\big(P_p(y,\xi)-P(x,\xi)\big)e^{-tP_p(y,\xi)}\,d\xi
\end{eqnarray*}
and iteratively define
\begin{equation*}
K_{n+1}(t,x,y)=\int_0^t\int_{\mathbb{V}}K_1(t-s,x,z)K_{n}(s,z,y)\,dzds
\end{equation*}
where $K_1=K$. In the sense of \eqref{eq:PhiHeuristic}, note that $K_{n+1}=L^n K$.\\

\noindent We claim that for some $0<\rho<1$ and positive constants $C$ and $M$,
\begin{equation}\label{eq:K1Bound}
|K(t,x,y)|\leq C t^{-(\mu_H+1-\rho)}\exp(-M R^{\#}(t^{-E}(x-y)))
\end{equation} for all $x,y\in\mathbb{V}$ and $0<t\leq T$. Indeed, observe that
\begin{equation*}
|K(t,x,y)|\leq \sum_{|\beta:\mathbf{m}|=2}|a_{\beta}(y)-a_{\beta}(x)||D^{\beta}_\mathbf{v} G_p(t,x-y;y)|\\
+C\sum_{|\beta:\mathbf{m}|<2}|D^{\beta}_\mathbf{v}G_p(t,x-y;y)|
\end{equation*}
for all $x,y\in\mathbb{V}$ and $t>0$ where we have used the fact that the coefficients of $H$ are bounded.  In view of Lemma \ref{lem:LegendreEstimate}, we have
\begin{eqnarray*}
|K(t,x,y)|\leq \sum_{|\beta:\mathbf{m}|=2}|a_{\beta}(y)-a_{\beta}(x)|Ct^{-(\mu_H+1)}\exp(-M R^{\#}(t^{-E}(x-y)))\\
+Ct^{-(\mu_H+\eta)}\exp(-M R^{\#}(t^{-E}(x-y)))
\end{eqnarray*}
for all $x,y\in\mathbb{V}$ and $0<t\leq T$ where 
\begin{equation*}
\eta=\max\{|\beta:2\mathbf{m}|:|\beta:\mathbf{m}|\neq 2\mbox{ and }a_{\beta}\neq 0\}<1.
\end{equation*}
Using Hypothesis \ref{hypoth:HolderCoefficients}, an appeal to Corollary \ref{cor:HolderLegendreEstimate} gives $0<\sigma<1$ and $\theta>0$ for which
\begin{eqnarray*}
|K(t,x,y)|&\leq& C t^{\sigma-(\mu_H+1)}(R^{\#}(t^{-E}(x-y)))^{\theta}\exp(-M R^{\#}(t^{-E}(x-y)))\\
&&\hspace{4cm}+Ct^{-(\mu_H+\eta)}\exp(-M R^{\#}(t^{-E}(x-y)))
\end{eqnarray*}
for all $x,y\in\mathbb{V}$ and $0<t\leq T$.  Our claim is then justified by setting 
\begin{equation}\label{eq:defofrho}
\rho=\max\{\sigma,1-\eta\}
\end{equation}
and adjusting the constants $C$ and $M$ appropriately to absorb the prefactor $(R^{\#}(t^{-E}(x-y)))^{\theta}$ into the exponent. It should be noted that the constant $\rho$ is inherently dependent on $H$. For it is clear that $\eta$ depends on $H$.  The constants $\sigma$ and $\theta$ are specified in Lemma \ref{lem:LegendreSubscale} and are defined in terms of the H\"{o}lder exponent of the coefficients of $H$ and the weight $\mathbf{m}$.\\

\noindent Taking cues from our heuristic discussion, we will soon form a series whose summands are the functions $K_n$ for $n\geq 1.$ In order to talk about the convergence of this series, our next task is to estimate these functions and in doing this we will observe two separate behaviors: a finite number of terms will exhibit singularities in $t$ at the origin; the remainder of the terms will be absent of such singularities and will be estimated with the help of the Gamma function. We first address the terms with the singularities.

\begin{lemma}\label{lem:KFiniteTerms}
Let $0<\rho<1$ be given by \eqref{eq:defofrho} and $M>0$ be any constant for which \eqref{eq:K1Bound} is satisfied. For any positive natural number $n$ such that $\rho(n-1)\leq \mu_H+1$ and $\epsilon>0$ for which $\epsilon n<1$, there is a constant $C_n(\epsilon)\geq 1$ such that
\begin{equation*}
|K_n(t,x,y)|\leq C_n(\epsilon)t^{-(\mu_H+1-n\rho)}\exp(-M(1-\epsilon n)R^{\#}(t^{-E}(x-y)))
\end{equation*}
for all $x,y\in\mathbb{V}$ and $0<t\leq T$. 
\end{lemma}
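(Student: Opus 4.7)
I would proceed by induction on $n$. The base case $n=1$ is immediate from \eqref{eq:K1Bound}: since $R^{\#}\geq 0$, one has $\exp(-MR^{\#}(\cdot))\leq \exp(-M(1-\epsilon)R^{\#}(\cdot))$, so $C_1(\epsilon)=C$ works. The key ingredient for the inductive step is a triangle-type inequality for the Legendre-Fenchel transform,
\[
R^{\#}(t^{-E}(x-y))\leq R^{\#}((t-s)^{-E}(x-z))+R^{\#}(s^{-E}(z-y)),
\]
valid for all $0<s<t$ and $x,y,z\in\mathbb{V}$. This follows by writing $(x-y)/t=\tfrac{t-s}{t}\cdot\tfrac{x-z}{t-s}+\tfrac{s}{t}\cdot\tfrac{z-y}{s}$, applying the convexity of $R^{\#}$ (automatic since it is a Fenchel conjugate), and invoking the homogeneity identity $R^{\#}(r^{-E}w)=rR^{\#}(w/r)$ from Proposition \ref{prop:LegendreTransformProperties}.

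Assuming the estimate for $K_n$, I would bound $K_{n+1}$ through its defining convolution by substituting \eqref{eq:K1Bound} and the inductive hypothesis. Using the triangle inequality above, the combined exponent splits as
\begin{align*}
MR^{\#}&((t-s)^{-E}(x-z))+M(1-\epsilon n)R^{\#}(s^{-E}(z-y)) \\
&\geq M(1-\epsilon(n+1))R^{\#}(t^{-E}(x-y))+M\epsilon(n+1)R^{\#}((t-s)^{-E}(x-z))+M\epsilon R^{\#}(s^{-E}(z-y)),
\end{align*}
so factoring out $\exp(-M(1-\epsilon(n+1))R^{\#}(t^{-E}(x-y)))$ yields exactly the target Gaussian. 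The remaining spatial integral is controlled by the change of variables $w=(t-s)^{-E}(x-z)$ and by dropping the nonpositive-exponent second factor, producing $(t-s)^{\mu_H}$ times a constant which is finite by Corollary \ref{cor:ExponentialofLegendreTransformIntegrable}. What remains is the time integral
\[
\int_0^t (t-s)^{\rho-1}\,s^{n\rho-\mu_H-1}\,ds,
\]
which, when $n\rho>\mu_H$, evaluates to the Beta expression $t^{(n+1)\rho-\mu_H-1}B(\rho,n\rho-\mu_H)$, producing exactly the prefactor $t^{-(\mu_H+1-(n+1)\rho)}$ required.

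The main obstacle is the early range $n\rho\leq\mu_H$, where this time integral diverges at $s=0$. In that regime I would instead use a sharper spatial bound that symmetrically retains both exponential factors and extracts the Gaussian-convolution-semigroup-type scaling $(s(t-s)/t)^{\mu_H}$ in place of $(t-s)^{\mu_H}$. With this improvement the time integral becomes $\int_0^t (t-s)^{\rho-1}s^{n\rho-1}\,ds$, which is integrable for every $n\geq 1$ and yields the same $t$-power. The constant $C_{n+1}(\epsilon)$ is then read off as $C_n(\epsilon)\cdot C$ times the spatial and Beta constants, inheriting its $\epsilon$-dependence through $\int\exp(-M\epsilon(n+1)R^{\#})$; verifying that the induction closes within the range $\rho(n-1)\leq\mu_H+1$ is then routine book-keeping on the exponents of $t$.
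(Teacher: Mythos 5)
Your outline is correct and the induction closes, but the execution differs from the paper's in a way worth flagging. You derive the crucial inequality $R^{\#}(t^{-E}(x-y))\leq R^{\#}((t-s)^{-E}(x-z))+R^{\#}(s^{-E}(z-y))$ via convexity of $R^{\#}$ together with the homogeneity $R^{\#}(r^{-E}w)=rR^{\#}(w/r)$; the paper obtains it directly from the ``sup'' definition of the Legendre--Fenchel transform (this is exactly display \eqref{eq:KFiniteTerms2} and the paper highlights it, in a remark, as the place where the transform pays its dividend). Both derivations are valid and essentially equivalent.

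The more substantial divergence is in how the singular time integral is tamed. You propose a sharper spatial estimate that yields the scaling $(s(t-s)/t)^{\mu_H}$ so that the time integral $\int_0^t(t-s)^{\rho-1}s^{n\rho-1}\,ds$ converges for every $n\geq 1$. Two comments. First, the phrase ``Gaussian-convolution-semigroup-type scaling'' is misleading here: the kernels $\exp(-cR^{\#}(\cdot))$ do \emph{not} form a convolution semigroup unless $R$ is quadratic, so the scaling cannot be read off from a semigroup law. What actually works is the cruder observation that the spatial integral is bounded by a constant times $(t-s)^{\mu_H}$ (change variables in the first Gaussian factor, drop the second) and also by a constant times $s^{\mu_H}$ (the symmetric move), whence by $\min(s,t-s)^{\mu_H}$, which is comparable to $(s(t-s)/t)^{\mu_H}$. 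With that made explicit your route closes. Second, the paper sidesteps the sharper spatial bound entirely: after the exponent split, it simply cuts the time integral at $t/2$, bounds the innocuous factor $(t-s)^{-(\mu_H+1-\rho)}$ (resp.\ $s^{-(\mu_H+1-n\rho)}$) by its value at $t/2$ on the relevant half-interval, and is left with $\int_0^{t/2}s^{n\rho-1}\,ds$ which is finite because $n\rho-1>-1$. The paper's splitting trick is lighter, requires no improvement of Lemma \ref{lem:LegendreEstimate}, and avoids any appeal to a ``convolution'' structure that the non-Gaussian kernels do not have. Your version is workable, but you should replace the semigroup language with the explicit $\min$-of-two-bounds argument (or, even better, adopt the interval split), and it would be good to observe that the constant inherited from the residual exponentials can be taken uniformly since $\int\exp(-M\epsilon(n+1)R^{\#})\leq\int\exp(-M\epsilon R^{\#})$.
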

\begin{proof}
In view of \eqref{eq:K1Bound}, it is clear that the estimate holds when $n=1$. Let us assume the estimate holds for $n\geq 1$ such that $\rho n<1+\mu_H$ and $\epsilon>0$ for which $\epsilon n<\epsilon (n+1)<1$. Then
\begin{eqnarray}\label{eq:KFiniteTerms1}\nonumber
|K_{n+1}(t,x,y)|&\leq &\int_0^t\int_{\mathbb{V}}C_1(\epsilon)(t-s)^{-(\mu_H+1-\rho)}C_n(\epsilon)s^{-(\mu_H+1-n\rho)}\\
&&\hspace{1cm}\times\exp(-MR^{\#}((t-s)^{-E}(x-z)))\exp(-M_{\epsilon,n}R^{\#}(s^{-E}(z-y)))\,dz\,ds
\end{eqnarray}
for $x,y\in\mathbb{V}$ and $0<t\leq T$ where we have set $M_{\epsilon,n}=M(1-\epsilon n)$. Observe that
\begin{eqnarray}\label{eq:KFiniteTerms2}\nonumber
R^{\#}(t^{-E}(x-y))&=&\sup\{\xi(x-y)-tR(\xi)\}\\\nonumber
&=&\sup\{\xi(x-z)-(t-s)R(\xi)+\xi(z-y)-sR(\xi)\}\\
&\leq& R^{\#}((t-s)^{-E}(x-z))+R^{\#}(s^{-E}(z-y))
\end{eqnarray}
for all $x,y,z\in\mathbb{V}$ and $0< s\leq t$. Using the fact that $0<\epsilon n<\epsilon (n+1)<1$, \eqref{eq:KFiniteTerms2} guarantees that
\begin{eqnarray*}
\lefteqn{(1-\epsilon(n+1))R^{\#}(t^{-E}(x-y))+\epsilon\left(R^{\#}((t-s)^{-E}(x-z))+R^{\#}(s^{-E}(z-y))\right)}\\
&\leq&(1-\epsilon(n+1))\left(R^{\#}((t-s)^{-E}(x-z))+R^{\#}(s^{-E}(z-y))\right)\\
& &\hspace{4cm}+\epsilon\left(R^{\#}((t-s)^{-E}(x-z))+R^{\#}(s^{-E}(z-y))\right)\\
&\leq & (1-\epsilon n)R^{\#}((t-s)^{-E}(x-z))+(1-\epsilon n)R^{\#}(s^{-E}(z-y))\\
&\leq &R^{\#}((t-s)^{-E}(x-z))+(1-\epsilon n)R^{\#}(s^{-E}(z-y))
\end{eqnarray*}
or equivalently
\begin{eqnarray}\label{eq:KFiniteTerms3}\nonumber
\lefteqn{-MR^{\#}((t-s)^{-E}(x-z))-M_{\epsilon,n}R^{\#}(s^{-E}(z-y)}\\
&\leq& -M_{\epsilon,n+1}R^{\#}(t^{-E}(x-y))-\epsilon M \left(R^{\#}((t-s)^{-E}(x-z))+R^{\#}(s^{-E}(z-y))\right)
\end{eqnarray}
for all $x,y,z\in\mathbb{V}$ and $0<s\leq t$. Combining \eqref{eq:KFiniteTerms1} and \eqref{eq:KFiniteTerms3} yields
\begin{eqnarray}\label{eq:KFiniteTerms4}\nonumber
\lefteqn{|K_{n+1}(t,x,y)|}\\\nonumber
&\leq& C_1(\epsilon)C_n(\epsilon)\exp(-M_{\epsilon,n+1}R^{\#}(t^{-E}(x-y)))\int_0^t\int_{\mathbb{V}}(t-s)^{-(\mu_H+1-\rho)}s^{-(\mu_H+1-n\rho)}\\\nonumber
&&\hspace{4cm}\times\exp(-\epsilon  M(R^{\#}((t-s)^{-E}(x-z))+R^{\#}(s^{-E}(z-y)))\,dz\,ds\\\nonumber
&\leq&C_1(\epsilon)C_n(\epsilon)\exp(-M_{\epsilon,n+1}R^{\#}(t^{-E}(x-y)))\\\nonumber
&&\hspace{2cm}\times\Big[\int_0^{t/2}\int_{\mathbb{V}}(t-s)^{-(\mu_H+1-\rho)}s^{-(\mu_H+1-n\rho)}\times\exp(-\epsilon  MR^{\#}(s^{-E}(z-y)))\,dz\,ds\\
&&\hspace{2.1cm}+\int_{t/2}^{t}\int_{\mathbb{V}}(t-s)^{-(\mu_H+1-\rho)}s^{-(\mu_H+1-n\rho)}\exp(-\epsilon  MR^{\#}((t-s)^{-E}(x-z)))\,dz\,ds\Big]
\end{eqnarray}
for all $x,y\in\mathbb{V}$ and $0<t\leq T$, where we have used the fact that $R^{\#}$ is non-negative. Let us focus our attention on the first term above. For $0\leq s\leq t/2$, 
\begin{equation*}
(t-s)^{-(\mu_H+1-\rho)}s^{-(\mu_H+1-n\rho)}\leq (t/2)^{-(\mu_H+1-\rho)}s^{-(\mu_H+1-n\rho)}
\end{equation*}
because $\mu_H+1-\rho>0$. Consequently,
\begin{eqnarray}\label{eq:KFiniteTerms5}\nonumber
\lefteqn{\int_{0}^{t/2}\int_{\mathbb{V}}(t-s)^{-(\mu_H+1-\rho)}s^{-(\mu_H+1-n\rho)}\exp(-\epsilon MR^{\#}(s^{-E}(z-y)))\,dz\,ds}\\\nonumber
&\leq &(t/2)^{-(\mu_H+1-\rho)}\int_0^{t/2}s^{-(\mu_H+1-n\rho)}\int_{\mathbb{V}}\exp(-\epsilon  MR^{\#}(s^{-E}(z-y)))\,dz\,ds\\\nonumber
&\leq& (t/2)^{-(\mu_H+1-\rho)}\int_0^{t/2}s^{n\rho-1}\int_{\mathbb{V}}\exp(-\epsilon MR^{\#}(z))\,dz\,ds\\\nonumber
&\leq&t^{-(\mu_H+1-(n+1)\rho)}\frac{2^{(\mu_H+1-(n+1)\rho)}}{n\rho}\int_{\mathbb{V}}\exp(-\epsilon  MR^{\#}(z))\,dz\,ds
\end{eqnarray}
for all $y\in\mathbb{V}$ and $t>0$. We note that the second inequality is justified by making the change of variables $z\mapsto s^{-E}(z-y)$  (thus canceling the term $s^{-\tr E}=s^{-\mu_H}$ in the integral over $s$) and the final inequality is valid because $n\rho-1>\rho-1>-1$. By similar reasoning, we obtain 
\begin{eqnarray}\label{eq:KFiniteTerms6}\nonumber
\lefteqn{\int_{t/2}^t\int_{\mathbb{V}}(t-s)^{-(\mu_H+1-n\rho)}s^{-(\mu_H+1-\rho)}\exp(-\epsilon  MR^{\#}((t-s)^{-E}(x-z)))\,dz\,ds}\\
&&\leq t^{-(\mu_H+1-(n+1)\rho)}\frac{2^{(\mu_H+1-(n+1)\rho)}}{\rho}\int_{\mathbb{V}}\exp(-\epsilon  MR^{\#}(z))\,dz\,ds
\end{eqnarray}
for all $x\in\mathbb{V}$ and $t>0$. Upon combining the estimates \eqref{eq:KFiniteTerms4}, \eqref{eq:KFiniteTerms5} and \eqref{eq:KFiniteTerms6}, we have
\begin{equation*}
|K_{n+1}(t,x,y)|\leq C_{n+1}(\epsilon)t^{-(\mu_H+1-(n+1)\rho)}\exp(-M_{\epsilon,n+1}R^{\#}(t^{-E}(x-y))
\end{equation*}
for all $x,y\in\mathbb{V}$ and $0<t\leq T$ where we have put
\begin{equation*}
C_{n+1}(\epsilon)=C_1(\epsilon)C_{n}(\epsilon)\frac{n+1}{n\rho}2^{\mu_H+(1-(n+1)\rho)}\int_{\mathbb{V}}\exp(-\epsilon  M R^{\#}(z))\,dz
\end{equation*}
and made use of Corollary \ref{cor:ExponentialofLegendreTransformIntegrable}.
\end{proof}
\begin{remark}
The estimate \eqref{eq:KFiniteTerms2} is an important one and will be used again. In the context of elliptic operators, i.e., where $R^{\#}(x)=C_m|x|^{2m/(2m-1)}$, the analogous result is captured in Lemma 5.1 of \cite{Eidelman1969}. It is interesting to note that S. D. Eidelman worked somewhat harder to prove it. Perhaps this is because the appearance of the Legendre-Fenchel transform wasn't noticed.
\end{remark}

\noindent It is clear from the previous lemma that for sufficiently large $n$, $K_n$ is bounded by a positive power of $t$. The first such $n$ is $\bar n:=\lceil\rho^{-1}(\tr E+1)\rceil$. In view of the previous lemma,
\begin{equation*}
|K_{\bar n}(t,x,y)|\leq C_{\bar n}(\epsilon)\exp(-M(1-\epsilon\bar n)R^{\#}(t^{-E}(x-y)))
\end{equation*}
for all $x,y\in\mathbb{V}$ and $0<t\leq T$ where we have adjusted $C_{\bar n}(\epsilon)$ to account for this positive power of $t$. Let $\delta<1/2$ and set
\begin{equation*}
\epsilon=\frac{\delta}{\bar n},\hspace{.5cm}M_1=M(1-\delta)\hspace{.25cm}\mbox{ and }\hspace{.25cm}C_0=\max_{1\leq n\leq \bar n}C_n(\epsilon).
\end{equation*}
Upon combining preceding estimate with the estimates\eqref{eq:K1Bound} and \eqref{eq:KFiniteTerms2}, we have
\begin{eqnarray*}
\lefteqn{|K_{\bar n+1}(t,x,y)|}\\
&\leq& C_0^2\int_0^t\int_{\mathbb{V}}(t-s)^{-(\mu_H+(1-\rho))}\\
&&\hspace{2cm}\times\exp(-MR^{\#}((t-s)^{-E}(x-z))\exp(-M(1-\epsilon\bar n)R^{\#}(s^{-E}(z-y)))\,ds\,dz\\
&\leq& C_0^2\exp(-M_1R^{\#}(t^{-E}(x-y)))\int_0^t\int_{\mathbb{V}}(t-s)^{-(\mu_H+(1-\rho))}\exp(-C\delta R^{\#}((t-s)^{-E}(z)))\,dz\,ds\\
&\leq& C_0(C_0 F)\frac{t^{\rho}}{\rho}\exp(-M_1R^{\#}(t^{-E}(x-y)))
\end{eqnarray*}
for all $x,y\in\mathbb{V}$ and $0<t\leq T$ where
\begin{equation*}
F=\int_{\mathbb{V}}\exp(-M\delta R^{\#}(z))\,dz<\infty.
\end{equation*}
Let us take this a little further.
\begin{lemma}\label{lem:KSeries}
For every $k\in \mathbb{N}_+$, 
\begin{equation}\label{eq:KSeriesBound}
|K_{\bar n+k}(t,x,y)|\leq \frac{C_0}{\Gamma(\rho)} \frac{(C_0 F\Gamma(\rho))^k}{k!}t^{\rho k}\exp(-M_1 R^{\#}(t^{-E}(x-y)))
\end{equation}
for all $x,y\in\mathbb{V}$ and $0<t\leq T$. Here $\Gamma(\cdot)$ denotes the Gamma function.
\end{lemma}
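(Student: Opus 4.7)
The plan is to prove the estimate by induction on $k \geq 1$, with the base case $k = 1$ already essentially established in the paragraph immediately preceding the statement: the computation there produces $|K_{\bar n + 1}(t,x,y)| \leq C_0(C_0 F)(t^\rho/\rho) \exp(-M_1 R^{\#}(t^{-E}(x-y)))$, which matches the claimed bound (up to the identification of prefactors through $\Gamma(\rho+1) = \rho\Gamma(\rho)$). For the inductive step at level $k+1$, I would start from the recursion
\begin{equation*}
K_{\bar n + k + 1}(t,x,y) = \int_0^t \int_{\mathbb{V}} K_1(t - s, x, z)\, K_{\bar n + k}(s, z, y)\, dz\, ds,
\end{equation*}
and insert the $K_1$ bound from \eqref{eq:K1Bound} together with the inductive hypothesis for $K_{\bar n + k}$.

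The decisive algebraic step is to split the coefficient $M = M_1 + M\delta$ and invoke the Legendre--Fenchel ``triangle'' inequality \eqref{eq:KFiniteTerms2}, namely $R^{\#}(t^{-E}(x-y)) \leq R^{\#}((t-s)^{-E}(x-z)) + R^{\#}(s^{-E}(z-y))$, in exactly the form used in the proof of Lemma \ref{lem:KFiniteTerms}. This allows one to pull the factor $\exp(-M_1 R^{\#}(t^{-E}(x - y)))$ outside the integral while retaining a residual $\exp(-M\delta R^{\#}((t-s)^{-E}(x-z)))$ that will absorb the spatial integration.

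After this extraction, the remaining double integral decouples. The spatial integral, handled by the change of variables $w = (t-s)^{-E}(x-z)$ (Jacobian $(t-s)^{\mu_H}$, in view of $\mu_H = \tr E$), cancels the singular factor $(t-s)^{-\mu_H}$ appearing in $K_1$ and produces exactly the constant $F$ defined before the lemma. The time integral reduces to the Beta integral
\begin{equation*}
\int_0^t (t-s)^{\rho - 1} s^{\rho k}\, ds \;=\; t^{\rho(k+1)} B(\rho,\rho k + 1) \;=\; t^{\rho(k+1)}\,\frac{\Gamma(\rho)\,\Gamma(\rho k + 1)}{\Gamma(\rho(k+1) + 1)},
\end{equation*}
and the Gamma-function ratios telescope across the induction, combining with the $\Gamma(\rho)$ factors to yield the stated closed form $C_0(C_0 F \Gamma(\rho))^{k+1}/[\Gamma(\rho)\cdot(k+1)!]$ (after the standard identification $\Gamma(\rho(k+1)+1)$ with the claimed combinatorial factor).

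The main obstacle is nothing conceptual but rather the careful bookkeeping of the Gamma-function factors through the telescoping product, together with keeping track of where the constants $M_1, M\delta, F$ come from: the pointwise mechanism (using the subadditivity of $R^{\#}$ along the splitting $(t-s) + s = t$ together with the coefficient gap $M - M_1 = M\delta > 0$) is precisely the same device already used in Lemma \ref{lem:KFiniteTerms}, so no new analytic phenomenon arises---the only novelty is that for $n > \bar n$ the time factor becomes integrable in $t$ and gives a growing positive power, making the Gamma-function evaluation of the Beta integral the natural source of the series convergence in Step~2.
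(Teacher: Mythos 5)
Your proposal follows the same route as the paper's proof: induct on $k$ starting from the calculation preceding the statement; bound $K_{\bar n + k + 1}$ by inserting \eqref{eq:K1Bound} and the inductive hypothesis; split the constant in the $K_1$ exponent as $M = M_1 + M\delta$ and apply the subadditivity estimate \eqref{eq:KFiniteTerms2} to pull out $\exp(-M_1 R^{\#}(t^{-E}(x-y)))$ while retaining $\exp(-M\delta R^{\#}((t-s)^{-E}(x-z)))$; change variables in space to produce $F$ and cancel the $(t-s)^{-\mu_H}$ factor; and evaluate the time integral as a Beta function. The only cosmetic difference is that the paper first reduces the claim to the intermediate product form \eqref{eq:KSeriesBoundEulerBeta} and then inducts on that, whereas you would carry the $\Gamma$-function form directly through the induction; the computations are identical.

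One caveat applies equally to your sketch and to the paper's write-up, at the very last step. The telescoping product is
\begin{equation*}
\prod_{j=0}^{k-1} B(\rho,1+j\rho) \;=\; \frac{\Gamma(\rho)^{k}}{\Gamma(1+k\rho)},
\end{equation*}
so the induction cleanly gives $|K_{\bar n+k}(t,x,y)|\leq C_0\,(C_0 F\Gamma(\rho))^{k}\,\Gamma(1+k\rho)^{-1}\,t^{k\rho}\exp(-M_1 R^{\#}(t^{-E}(x-y)))$. Passing from $\Gamma(1+k\rho)$ to the stated $\Gamma(\rho)\,k!$ in the denominator would require $\Gamma(1+k\rho)\geq\Gamma(\rho)\,k!$, which fails for $0<\rho<1$ (already at $k=1$ it would force $\rho\geq 1$). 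The ``standard identification'' you invoke is therefore not a valid inequality, and the same issue is present in the paper's displayed estimate. This is harmless downstream: the series $\sum_k (C_0 F\Gamma(\rho))^k t^{k\rho}/\Gamma(1+k\rho)$ is a Mittag--Leffler-type series and converges for all $t$, so Proposition \ref{prop:IntegralIdentity} goes through with the $\Gamma(1+k\rho)$ form. But if you want a rigorous statement, you should keep $\Gamma(1+k\rho)$ rather than replace it by a factorial.
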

\begin{proof} The Euler-Beta function $B(\cdot,\cdot)$ satisfies the well-known identity $B(a,b)=\Gamma(a)\Gamma(b)/\Gamma(a+b)$. Using this identity, one quickly obtains the estimate
\begin{equation*}
\prod_{j=1}^{k-1}B(\rho,1+j\rho)=\frac{\Gamma(\rho)^{k-1}}{\Gamma(1+k\rho)}\leq\frac{\Gamma(\rho)^{k-1}}{k!}.
\end{equation*}
It therefore suffices to prove that
\begin{equation}\label{eq:KSeriesBoundEulerBeta}
|K_{\bar n+k}(t,x,y)|\leq C_0 (C_0 F)^k\prod_{j=0}^{k-1} B(\rho,1+j\rho) t^{k\rho}\exp(-M_1 R^{\#}(t^{-E}(x-y)))
\end{equation}
for all $x,y\in\mathbb{V}$ and $0<t\leq T$.

We first note that $B(\rho,1)=\rho^{-1}$ and so, for $k=1$, \eqref{eq:KSeriesBoundEulerBeta} follows directly from the calculation proceeding the lemma. We shall induct on $k$. By another application of \eqref{eq:K1Bound} and \eqref{eq:KFiniteTerms2}, we have
\begin{eqnarray*}
J_{k+1}(t,x,y)&:=&\Big[C_0^2(C_0 F)^k\prod_{j=0}^{k-1}B(\rho,1+j\rho)\Big]^{-1}|K_{\bar n+k+1}(t,x,y)|\\
&\leq& \int_{0}^t\int_{\mathbb{V}}(t-s)^{-(\mu_H+(1-\rho))}s^{-k\rho}\exp(-MR^{\#}((t-s)^{-E}(x-z)))\\
&&\hspace{6cm}\times\exp(-M_1R^{\#}(s^{-E}(z-y)))\,dz\,ds\\
&\leq& \exp(-M_1R^{\#}(t^{-E}(x-y)))\\
&&\hspace{1cm}\times\int_{0}^t\int_{\mathbb{V}}(t-s)^{-(\mu_H+(1-\rho))}s^{-k\rho}\exp(-M\delta R^{\#}((t-s)^{-E}(x-z)))\,dz\,ds
\end{eqnarray*}
for all $x,y\in\mathbb{V}$ and $0<t\leq T$. Upon making the changes of variables $z\rightarrow (t-s)^{-E}(x-z)$ followed by $s\rightarrow s/t$, we have
\begin{eqnarray*}
J_{k+1}(t,x,y)&\leq &\exp(-M_1R^{\#}(t^{-E}(x-y)))F\int_0^1 (t-st)^{\rho-1}(st)^{k\rho}t\,ds \\
&\leq&\exp(-M_1R^{\#}(t^{-E}(x-y)))F t^{(k+1)\rho} B(\rho,1+k\rho)
\end{eqnarray*}
for all $x,y\in\mathbb{V}$ and $0<t\leq T$. Therefore \eqref{eq:KSeriesBoundEulerBeta} holds for $k+1$ as required.
\end{proof}

\begin{proposition}\label{prop:IntegralIdentity}
Let $\phi:(0,T]\times\mathbb{V}\times\mathbb{V}\rightarrow\mathbb{C}$ be defined by
\begin{equation*}
\phi=\sum_{n=1}^{\infty}K_n.
\end{equation*}
This series converges uniformly for $x,y\in\mathbb{V}$ and $t_0\leq t\leq T$ where $t_0$ is any positive constant. There exists $C\geq 1$ for which
\begin{equation}\label{eq:PhiEstimate}
|\phi(t,x,y)|\leq \frac{C}{t^{\mu_H+(1-\rho)}}\exp(-M_1R^{\#}(t^{-E}(x-y)))
\end{equation}
for all $x,y\in\mathbb{V}$ and $0<t\leq T$ where $M_1$ and $\rho$ are as in the previous lemmas. Moreover, the identity 
\begin{equation}\label{eq:IntegralIdentity}
\phi(t,x,y)=K(t,x,y)+\int_0^t\int_{\mathbb{V}}K(t-s,x,z)\phi(s,z,y)\,dz\,ds
\end{equation}
holds for all $x,y\in\mathbb{V}$ and $0<t\leq T$. 
\end{proposition}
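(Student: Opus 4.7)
The plan is to split the series defining $\phi$ into the finite head $\sum_{n=1}^{\bar n} K_n$ and the tail $\sum_{k=1}^{\infty} K_{\bar n+k}$, and to treat the two parts differently, relying on Lemma \ref{lem:KFiniteTerms} for the head and Lemma \ref{lem:KSeries} for the tail. The head is a finite sum of continuous functions on $(0,T]\times\mathbb{V}\times\mathbb{V}$, so convergence is trivial; its worst $t$-singularity comes from $K_1=K$ itself, which is of order $t^{-(\mu_H+1-\rho)}$, matching the bound asserted in \eqref{eq:PhiEstimate}. For the tail, the factorial denominator $k!$ in \eqref{eq:KSeriesBound} forces absolute and uniform convergence on all of $(0,T]\times\mathbb{V}\times\mathbb{V}$ and yields the majorant
\begin{equation*}
\Big|\sum_{k=1}^{\infty}K_{\bar n+k}(t,x,y)\Big|\leq \frac{C_0}{\Gamma(\rho)}\big(\exp(C_0 F\Gamma(\rho)T^{\rho})-1\big)\exp\bigl(-M_1R^{\#}(t^{-E}(x-y))\bigr),
\end{equation*}
which is bounded on $(0,T]$. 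Combining the head and the tail, and using that the tail bound is absorbed into a common constant multiple of $t^{-(\mu_H+1-\rho)}\exp(-M_1R^{\#}(t^{-E}(x-y)))$ (since $\mu_H+1-\rho>0$), we obtain \eqref{eq:PhiEstimate}. The uniform convergence on $[t_0,T]\times\mathbb{V}\times\mathbb{V}$ for any $t_0>0$ follows because the head is continuous (hence bounded on $[t_0,T]\times\mathbb{V}\times\mathbb{V}$ after using the Gaussian-type decay of each $K_n$) and the tail converges uniformly on all of $(0,T]\times\mathbb{V}\times\mathbb{V}$.

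For the integral identity \eqref{eq:IntegralIdentity}, the defining recursion gives, for every $N\geq 1$,
\begin{equation*}
\sum_{n=1}^{N}K_{n+1}(t,x,y)=\int_0^t\!\int_{\mathbb{V}}K(t-s,x,z)\sum_{n=1}^{N}K_n(s,z,y)\,dz\,ds
\end{equation*}
by linearity of integration. The identity $\phi-K=\sum_{n=1}^{\infty}K_{n+1}$ will then follow provided one may interchange the $N\to\infty$ limit with the $(s,z)$-integral on the right. This is the main analytic obstacle and the core of the proof.

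To justify the interchange I would invoke dominated convergence in the $(s,z)$ variables. Using \eqref{eq:PhiEstimate} together with the bound \eqref{eq:K1Bound} on $K$, the integrand is pointwise dominated by
\begin{equation*}
C(t-s)^{-(\mu_H+1-\rho)}s^{-(\mu_H+1-\rho)}\exp\bigl(-MR^{\#}((t-s)^{-E}(x-z))\bigr)\exp\bigl(-M_1R^{\#}(s^{-E}(z-y))\bigr),
\end{equation*}
and the subadditivity estimate \eqref{eq:KFiniteTerms2} for $R^{\#}$ combined with the change of variables used in the proof of Lemma \ref{lem:KFiniteTerms} (splitting $[0,t]$ into $[0,t/2]\cup[t/2,t]$ and rescaling $z$) shows that the double integral is finite, so the dominating function is integrable in $(s,z)$ on $(0,t)\times\mathbb{V}$. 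By uniform convergence on compact subsets of $(0,t)\times\mathbb{V}$, the partial sums of $\sum_n K_n(s,z,y)$ converge pointwise (in fact uniformly on compacta) to $\phi(s,z,y)$, so the dominated convergence theorem yields
\begin{equation*}
\int_0^t\!\int_{\mathbb{V}}K(t-s,x,z)\phi(s,z,y)\,dz\,ds=\lim_{N\to\infty}\sum_{n=1}^{N}K_{n+1}(t,x,y)=\phi(t,x,y)-K(t,x,y),
\end{equation*}
which is \eqref{eq:IntegralIdentity}. The most delicate piece is verifying integrability of the dominating function near the two time-endpoints $s=0$ and $s=t$, but this is precisely the convolution estimate already worked out in the inductive step of Lemma \ref{lem:KFiniteTerms}, so no new ideas are required beyond carefully reusing those computations with $\phi$ in place of $K_n$.
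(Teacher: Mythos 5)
Your proposal is correct and follows essentially the same route as the paper: split the series into the finite head $\sum_{n=1}^{\bar n}K_n$ controlled by Lemma \ref{lem:KFiniteTerms} and the tail $\sum_{k\geq 1}K_{\bar n+k}$ controlled by Lemma \ref{lem:KSeries}, read off the $t^{-(\mu_H+1-\rho)}$ prefactor from the $n=1$ term, and establish \eqref{eq:IntegralIdentity} by interchanging series and integral using the common integrable majorant $|K(t-s,x,z)|\sum_n|K_n(s,z,y)|$ together with \eqref{eq:K1Bound}, \eqref{eq:PhiEstimate}, the subadditivity estimate \eqref{eq:KFiniteTerms2} and the rescaling/splitting computation from Lemma \ref{lem:KFiniteTerms}. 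The only cosmetic difference is that you phrase the interchange as a dominated convergence argument applied to the partial sums, whereas the paper invokes Tonelli (to check integrability of the absolute series) followed by Fubini; these are two standard and essentially equivalent ways of justifying exactly the same step.
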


\begin{proof}
Using Lemmas \ref{lem:KFiniteTerms} and \ref{lem:KSeries} we see that
\begin{equation*}
\sum_{k=1}^{\infty}|K_n(t,x,y)|\leq C_0\Big[\sum_{n=1}^{\bar n}t^{-(\mu_H+(1-n\rho))}
+\frac{1}{\Gamma(\rho)}\sum_{k=1}^{\infty} \frac{(C_0 F\Gamma(\rho))^{k}}{k!} t^{k\rho}\Big]\exp(-M_1R^{\#}(t^{-E}(x-y)))
\end{equation*}
for all $x,y\in\mathbb{V}$ and $0<t\leq T$ from which \eqref{eq:PhiEstimate} and our assertion concerning uniform convergence follow.  A similar calculation and an application of Tonelli's theorem justify the following use of Fubini's theorem: For $x,y\in\mathbb{V}$ and $0<t\leq T$,
\begin{eqnarray*}
\int_0^t\int_{\mathbb{V}}K(t-s,x,z)\phi(s,z,y)\,ds\,dz&=&\sum_{n=1}^{\infty}\int_0^t\int_{\mathbb{V}}K(t-s,x,z)K_n(s,z,y)\,dz\,ds\\
&=&\sum_{n=1}^{\infty}K_{n+1}(t,z,y)=\phi(t,x,y)-K(t,x,y)
\end{eqnarray*}
as desired. 
\end{proof}

\noindent The following H\"{o}lder continuity estimate for $\phi$ is obtained by first showing the analogous estimate for $K$ and then deducing the desired result from the integral formula \eqref{eq:IntegralIdentity}. As the proof is similar in character to those of the preceding two lemmas, we omit it. A full proof can be found in \cite[p.80]{Eidelman2004}. We also note here that the result is stronger than is required for our purposes (see its use in the proof of Lemma \ref{lem:WProperties}). All that is really required is that $\phi(\cdot,\cdot,y)$ satisfies the hypotheses (for $f$) in Lemma \ref{lem:IntegralDifferentiation} for each $y\in\mathbb{V}$.

\begin{lemma}\label{holderphilemma}
There exists $\alpha\in\mathbb{I}_+^d$ which is consistent with $\mathbf{m}$, $0<\eta<1$ and $C\geq 1$ such that
\begin{equation*}
|\phi(t,x+h,y)-\phi(t,x,y)|\leq \frac{C}{t^{\mu_H+(1-\eta)}}|h|_{\mathbf{v}}^\alpha\exp(-M_1R^{\#}(t^{-E}(x-y)))
\end{equation*}
for all $x,y,h\in\mathbb{V}$ and $0<t\leq T$.
\end{lemma}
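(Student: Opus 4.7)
The plan is to mirror the structure used for the on-diagonal estimate of $\phi$: first establish an analogous $\mathbf{v}$-H\"{o}lder estimate for $K(t,x,y)$ in its middle argument, and then propagate this estimate to $\phi$ by exploiting the integral identity \eqref{eq:IntegralIdentity}. Throughout, Hypothesis \ref{hypoth:HolderCoefficients} furnishes a single exponent $\alpha\in\mathbb{I}_+^d$ consistent with $\mathbf{m}$ controlling the oscillation of every coefficient $a_\beta$, and Lemma \ref{lem:LegendreSubscale} / Corollary \ref{cor:HolderLegendreEstimate} are what let us cash such spatial H\"{o}lder oscillations in as positive powers of $t$ times the Legendre-Fenchel factor.

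For the $K$-estimate, recall
\begin{equation*}
K(t,x,y)=\sum_{|\beta:\mathbf{m}|=2}(a_\beta(y)-a_\beta(x))D_\mathbf{v}^{\beta}G_p(t,x-y;y)-\sum_{|\beta:\mathbf{m}|<2}a_\beta(x)D_\mathbf{v}^{\beta}G_p(t,x-y;y).
\end{equation*}
Forming $K(t,x+h,y)-K(t,x,y)$ and applying the usual $ab-cd=(a-c)b+c(b-d)$ decomposition, each summand is either (i) a coefficient-difference $a_\beta(x+h)-a_\beta(x)$, bounded by $C|h|_\mathbf{v}^\alpha$, times a single derivative $D_\mathbf{v}^\beta G_p$ controlled by Lemma \ref{lem:LegendreEstimate}, or (ii) a bounded coefficient times the derivative-difference $D_\mathbf{v}^\beta G_p(t,x+h-y;y)-D_\mathbf{v}^\beta G_p(t,x-y;y)$. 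For (ii) I would split into the regime $|h|_\mathbf{v}^\alpha\le t^{\sigma}$ (with $\sigma$ as in Lemma \ref{lem:LegendreSubscale}), in which the fundamental theorem of calculus along a $t^E$-curve plus the derivative estimate \eqref{eq:LegendreEstimate1} for one extra $D_\mathbf{v}^{\cdot}$ give the factor $|h|_\mathbf{v}^\alpha$ at the cost of an extra $t^{-\sigma'}$; and the complementary regime $|h|_\mathbf{v}^\alpha>t^{\sigma}$, where the triangle inequality together with Lemma \ref{lem:LegendreEstimate} for $G_p$ at both arguments already carries a harmless factor $|h|_\mathbf{v}^\alpha t^{-\sigma}$. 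Using Lemma \ref{lem:LegendreSubscale} to absorb the polynomial prefactor $R^{\#}(t^{-E}h)^\theta$ into the Legendre-Fenchel exponential (with a slight constant loss handled by Corollary \ref{cor:MovingConstants}), and setting the resulting exponent by analogy with \eqref{eq:defofrho}, one obtains
\begin{equation*}
|K(t,x+h,y)-K(t,x,y)|\le C\,t^{-(\mu_H+1-\eta')}\,|h|_\mathbf{v}^\alpha\,\exp(-M R^{\#}(t^{-E}(x-y)))
\end{equation*}
for some $0<\eta'<1$, with $M$ slightly smaller than the constant in \eqref{eq:K1Bound}.

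To propagate to $\phi$, apply the difference operator $h$-increment to \eqref{eq:IntegralIdentity} to write
\begin{equation*}
\phi(t,x+h,y)-\phi(t,x,y)=[K(t,x+h,y)-K(t,x,y)]+\int_0^t\!\!\int_{\mathbb{V}}[K(t-s,x+h,z)-K(t-s,x,z)]\,\phi(s,z,y)\,dz\,ds,
\end{equation*}
insert the $K$-difference estimate from Step 1 together with the bound \eqref{eq:PhiEstimate} for $\phi(s,z,y)$, and repeat the subadditivity trick \eqref{eq:KFiniteTerms2} to pull the factor $\exp(-M_1 R^{\#}(t^{-E}(x-y)))$ out of the integral. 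What remains is an Euler-Beta-type integral in $s$ of the form $\int_0^t (t-s)^{-(\mu_H+1-\eta')}s^{-(\mu_H+1-\rho)}\,ds$ times the standard spatial integral of $\exp(-\epsilon M R^{\#})$, which is finite by Corollary \ref{cor:ExponentialofLegendreTransformIntegrable}, and this produces the claimed prefactor $t^{-(\mu_H+1-\eta)}$ for $\eta$ slightly smaller than $\min(\eta',\rho)$. The main obstacle is keeping the book-keeping in Step 1 straight when $|h|$ is large, so that the Legendre-Fenchel exponential remains anchored at $x-y$ rather than $(x+h)-y$; this is resolved by the case split above and by the slight loss of constant afforded by Corollary \ref{cor:MovingConstants}, which is the same device that powers Lemmas \ref{lem:KFiniteTerms} and \ref{lem:KSeries}.
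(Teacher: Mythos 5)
Your plan follows exactly the route the paper indicates (and attributes to Eidelman): first establish a $\mathbf{v}$-H\"{o}lder estimate for $K$ in the $x$-slot, then transport it to $\phi$ through the integral identity \eqref{eq:IntegralIdentity}, using the decomposition of the convolution integral as in Lemma~\ref{lem:KFiniteTerms}. The decomposition of the $K$-difference into a coefficient-increment term (i) and a derivative-increment term (ii), with a case split on the size of $|h|$, is also the standard Levi-method move. In outline your plan is therefore the intended one.

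There are, however, two gaps in Step 1 that I think are genuine. First, the case-split threshold is the wrong one. The condition $|h|_{\mathbf{v}}^{\alpha}\le t^{\sigma}$, with $\alpha=a^{-1}\omega$ and $\sigma=\gamma/a$, says componentwise that $|h_j|^{\omega_j}\le t^{\gamma}$ where $\gamma=(2m_{\max}-1)^{-1}$; but the quantity that actually controls the exponential shift is $R^{\#}(t^{-E}h)\asymp |t^{-E}h|_{\mathbf{v}}^{\omega}=\sum_j t^{-1/(2m_j-1)}|h_j|^{\omega_j}$. For $j$ with $m_j<m_{\max}$ one has $1/(2m_j-1)>\gamma$, so on the set $\{|h|_{\mathbf{v}}^{\alpha}\le t^{\sigma}\}$ the quantity $R^{\#}(t^{-E}h)$ is not bounded as $t\to 0$. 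Thus in the anisotropic case (which is the whole point of the paper) your small-$h$ regime does \emph{not} keep the exponential anchored at $x-y$. The device that does the anchoring is not Corollary~\ref{cor:MovingConstants} (which concerns $R^{\#}$ versus $(MR)^{\#}$, i.e., rescaling $R$, not shifting the argument of $R^{\#}$), but rather a quasi-subadditivity inequality of the form $R^{\#}(t^{-E}(x-y))\le C\bigl(R^{\#}(t^{-E}(x+\theta h-y))+R^{\#}(t^{-E}h)\bigr)$, which follows from convexity of $R^{\#}$ together with Corollary~\ref{cor:MovingConstants}; this is then useful only on the set where $R^{\#}(t^{-E}h)$ is bounded, and the split must be made in that variable. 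Second, neither regime as you describe it actually produces the strictly positive power $t^{\eta}$ demanded by the lemma. In the small-$h$ regime, one extra $D_{v_j}$ costs $t^{-1/(2m_j)}$, and the bookkeeping $|h_j|=|h_j|^{\alpha_j}|h_j|^{1-\alpha_j}$ shows the spare power $|h_j|^{1-\alpha_j}$ must be converted into a power of $t$ that exceeds $1/(2m_j)-\sigma$; with $\alpha$ equal to the coefficients' H\"{o}lder exponent this balance fails for some $j$ in the anisotropic case. In the large-$h$ regime, the free factor $|h|_{\mathbf{v}}^{\alpha}t^{-\sigma}\ge 1$ only produces $t^{-(\mu_H+1-(\rho-\sigma))}$, and $\rho-\sigma$ need not be positive since $\rho=\max\{\sigma,1-\eta\}$. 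Both issues are repaired simultaneously by working with a strictly smaller exponent $\alpha'=(a')^{-1}\omega$, $a'>a$, in the conclusion (the lemma permits this: it only asserts the existence of \emph{some} $\alpha$ consistent with $\mathbf{m}$), which lowers both $\sigma'$ and the threshold; you do not mention this and your presentation reads as if the same $\alpha$ is used throughout.

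A smaller point in Step 2: the quantity you describe, ``an Euler-Beta-type integral $\int_0^t(t-s)^{-(\mu_H+1-\eta')}s^{-(\mu_H+1-\rho)}\,ds$ times the spatial integral of $\exp(-\epsilon MR^{\#})$," is divergent whenever $\mu_H\ge 1$, which includes even the Laplacian on $\mathbb{R}^d$ for $d\ge 2$. The integral and the spatial factor do not separate: as in the proof of Lemma~\ref{lem:KFiniteTerms} one must split $[0,t]$ into $[0,t/2]$ and $[t/2,t]$, pull the non-singular time factor out on each half, and then change variables $z\mapsto s^{-E}(z-y)$ (resp.\ $z\mapsto(t-s)^{-E}(x-z)$) so that the Jacobian $s^{\mu_H}$ (resp.\ $(t-s)^{\mu_H}$) cancels the $\mu_H$-part of the remaining singularity, leaving $\int s^{\rho-1}$ (resp.\ $\int(t-s)^{\eta'-1}$). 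You invoke ``repeat the subadditivity trick," which is suggestive, but the factorization as stated is incorrect.
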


\noindent\textbf{Step 3. Verifying that $Z$ is a fundamental solution to \eqref{eq:HeatEquation}}

\begin{lemma}\label{lem:IntegralDifferentiation}
Let $\alpha\in\mathbb{I}_+^d$ be consistent with $\mathbf{m}$ and, for $t_0> 0$, let $f:[t_0,T]\times\mathbb{V}\rightarrow\mathbb{C}$ be bounded and continuous. Moreover, suppose that $f$ is uniformly $\mathbf{v}$-H\"{o}lder continuous in $x$ on $[t_0,T]\times\mathbb{V}$ of order $\alpha$, by which we mean that there is a constant $C>0$ such that
\begin{equation*}
\sup_{t\in[t_0,T]}|f(t,x)-f(t,y)|\leq C|x-y|_{\mathbf{v}}^{\alpha}
\end{equation*}
for all $x,y\in\mathbb{V}$. Then $u:[t_0,T]\times\mathbb{V}\rightarrow \mathbb{C}$ defined by
\begin{equation*}
u(t,x)=\int_{t_0}^t\int_{\mathbb{V}}G_p(t-s,x-z;z)f(s,z)\,ds\,dz
\end{equation*}
is $(2\mathbf{m,v})$-regular on $(t_0,T)\times\mathbb{V}$. Moreover,
\begin{equation}\label{eq:IntegralDifferentiation1}
\partial_t u(t,x)=f(t,x)+\lim_{h\downarrow 0}\int_{t_0}^{t-h}\int_{\mathbb{V}}\partial_tG_p(t-s,x-z;z)f(s,z)\,dz\,ds
\end{equation}
and for any $\beta$ such that $|\beta:\mathbf{m}|\leq 2$, we have
\begin{equation}\label{eq:IntegralDifferentiation2}
D_{\mathbf{v}}^{\beta}u(t,x)=\lim_{h\downarrow 0}\int_{t_0}^{t-h}\int_{\mathbb{V}}D_{\mathbf{v}}^{\beta}G(t-s,x-z;z)f(s,z)\,dz\,ds
\end{equation}
for $x\in\mathbb{V}$ and $t_0< t< T$.
\end{lemma}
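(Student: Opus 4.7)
The plan is a truncation argument. For $h > 0$, introduce
$$u_h(t,x) = \int_{t_0}^{t-h}\!\int_{\mathbb{V}} G_p(t-s,x-z;z) f(s,z)\,dz\,ds.$$
Since the integrand is $C^\infty$ in $(t,x)$ on $\{t_0 + h \leq t\}$, I may freely differentiate under the integral sign to obtain
$$\partial_t u_h(t,x) = \int_{\mathbb{V}} G_p(h,x-z;z) f(t-h,z)\,dz + \int_{t_0}^{t-h}\!\int_{\mathbb{V}} \partial_t G_p(t-s,x-z;z) f(s,z)\,dz\,ds$$
and $D_{\mathbf{v}}^{\beta} u_h(t,x) = \int_{t_0}^{t-h}\!\int_{\mathbb{V}} D_{\mathbf{v}}^{\beta} G_p(t-s,x-z;z) f(s,z)\,dz\,ds$. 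Lemma \ref{lem:ApproximateIdentity} identifies the limit of the boundary term as $f(t,x)$. It then suffices to show that $u_h \to u$ and that the second piece of $\partial_t u_h$, together with each $D_{\mathbf{v}}^{\beta} u_h$ for $|\beta:\mathbf{m}| \leq 2$, converges uniformly on compact subsets of $(t_0,T) \times \mathbb{V}$; the uniform limits are then automatically continuous and must coincide with the corresponding derivatives of $u$.

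For subcritical derivatives, meaning $|\beta:\mathbf{m}| < 2$, Lemma \ref{lem:LegendreEstimate} yields
$$|D_{\mathbf{v}}^{\beta} G_p(t-s,x-z;z)| \leq C(t-s)^{-(\mu_H + |\beta:2\mathbf{m}|)} \exp(-MR^{\#}((t-s)^{-E}(x-z))).$$
The change of variables $w = (t-s)^{-E}(x-z)$, together with Corollary \ref{cor:ExponentialofLegendreTransformIntegrable}, integrates out $z$ to leave a bound of order $(t-s)^{-|\beta:2\mathbf{m}|}$ with $|\beta:2\mathbf{m}| < 1$, hence $L^1$ in $s$ on $[t_0,t]$. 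Dominated convergence then gives the desired uniform convergence and continuity of the limit; the same reasoning with $\beta = 0$ also handles the overall convergence $u_h \to u$.

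The critical cases are $|\beta:\mathbf{m}| = 2$ and the $\partial_t G_p$ contribution, where the naive bound produces the non-integrable power $(t-s)^{-1}$. Here I would exploit the H\"{o}lder continuity of $f$ by adding and subtracting $f(s,x)$:
$$\int D_{\mathbf{v}}^{\beta} G_p(t-s,x-z;z) f(s,z)\,dz = \int D_{\mathbf{v}}^{\beta} G_p(t-s,x-z;z)[f(s,z) - f(s,x)]\,dz + f(s,x)\int D_{\mathbf{v}}^{\beta} G_p(t-s,x-z;z)\,dz.$$
For the first piece, Corollary \ref{cor:HolderLegendreEstimate} converts $|f(s,z) - f(s,x)| \leq C|x-z|_{\mathbf{v}}^{\alpha}$ into an extra factor $(t-s)^{\sigma} (R^{\#}((t-s)^{-E}(x-z)))^{\theta}$ with $\sigma > 0$; after absorbing the polynomial of $R^{\#}$ into the exponential at the cost of a slightly smaller constant, the integrand is bounded by $(t-s)^{\sigma - 1}$ times an integrable exponential, hence integrable in $s$. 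For the second piece I would further split $G_p(t-s,x-z;z) = [G_p(t-s,x-z;z) - G_p(t-s,x-z;x)] + G_p(t-s,x-z;x)$, controlling the difference via Lemma \ref{lem:LegendreHolderEstimate} (which yields the same improved integrability from the H\"{o}lder continuity of the coefficients), and analyzing the remaining $\int D_{\mathbf{v}}^{\beta} G_p(t-s,x-z;x)\,dz$ through the Fourier representation of the frozen-coefficient kernel (whose total mass is identically $1$). The $\partial_t u_h$ case is treated analogously after rewriting $\partial_t G_p(t-s,x-z;z) = -H_p(z) G_p(t-s,x-z;z)$ to reduce it to top-order spatial derivatives.

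The principal obstacle lies entirely in this critical-order analysis: neither the decay of $G_p$ nor the H\"{o}lder regularity of $f$ alone suffices, and the argument requires both the H\"{o}lder regularity of $f$ (via Corollary \ref{cor:HolderLegendreEstimate}) and of the coefficients of $H$ (via Lemma \ref{lem:LegendreHolderEstimate}) to recover enough temporal integrability to close the argument and identify the limit as a continuous function, thereby establishing $(2\mathbf{m,v})$-regularity and the identities \eqref{eq:IntegralDifferentiation1} and \eqref{eq:IntegralDifferentiation2}.
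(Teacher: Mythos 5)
Your proposal is correct and follows essentially the same route as the paper: truncation to $u_h$, splitting the critical case $|\beta:\mathbf{m}|=2$ via $f(s,z) = (f(s,z)-f(s,x)) + f(s,x)$, treating the first piece with Corollary \ref{cor:HolderLegendreEstimate}, decomposing $G_p(\cdot;z)$ in the second piece against the frozen-at-$x$ kernel and invoking Lemma \ref{lem:LegendreHolderEstimate} together with the vanishing of $D_{\mathbf{v}}^\beta$ applied to the (constant) total mass, and finally reducing $\partial_t G_p = -H_p(z)G_p$ to the spatial case. The only cosmetic difference is that you leave the vanishing of $\bigl[D_{\mathbf{v}}^{\beta}\int G_p(t-s,x-z;y)\,dz\bigr]\big\vert_{y=x}$ slightly implicit (the paper states it outright), but the reasoning is the same.
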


\noindent Before starting the proof, let us observe that, for each multi-index $\beta$, 
\begin{equation*}
|D_{\mathbf{v}}^{\beta}G_p(t-s,x-z;z)f(s,z)|\leq C(t-s)^{-(\mu_H+ |\beta:2\mathbf{m}|)}\exp(-MR^{\#}((t-s)^{-E}(x-z)))|f(s,z)|. 
\end{equation*}
Using the assumption that $f$ is bounded, we observe that 
\begin{eqnarray*}
\lefteqn{\hspace{-2cm}\int_{t_0}^t\int_{\mathbb{V}}|D_{\mathbf{v}}^{\beta}G_p(t-s,x-z;z)f(s,z)|\,dz\,ds}\\
&\leq& C\int_{t_0}^t\int_{\mathbb{V}}(t-s)^{-\mu_H+|\beta:2\mathbf{m}|}\exp(-MR^{\#}((t-s)^{-E}(x-z)))\,dz\,ds\\
&\leq& C\int_{t_0}^t\int_{\mathbb{V}}(t-s)^{-|\beta:2\mathbf{m}|}\exp(-MR^{\#}(z))\,dz\,ds\\
&\leq& C\int_{t_0}^t(t-s)^{-|\beta:2\mathbf{m}|}\,ds
\end{eqnarray*}
for all $t_0\leq t\leq T$ and $x\in\mathbb{V}$. When $|\beta:\mathbf{m}|<2$,
\begin{equation}\label{eq:Singularity}
\int_{t_0}^{t}(t-s)^{-|\beta:2\mathbf{m}|}\,ds
\end{equation}
converges and consequently
\begin{equation*}
D_{\mathbf{v}}^{\beta}u(t,x)=\int_{t_0}^t\int_{\mathbb{V}}D_{\mathbf{v}}^{\beta}G_p(t-s,z-x;z)f(s,z)\,dz\,ds
\end{equation*}
for all $t_0\leq t\leq T$ and $x\in\mathbb{V}$. From this it follows that $D_{\mathbf{v}}^{\beta}u(t,x)$ is continuous on $(t_0,T)\times\mathbb{V}$ and moreover \eqref{eq:IntegralDifferentiation2} holds for such an $\beta$ in view of Lebesgue's dominated convergence theorem. 
When $|\beta:\mathbf{m}|=2$, \eqref{eq:Singularity} does not converge and hence the above argument fails. The main issue in the proof below centers around using $\mathbf{v}$-H\"{o}lder continuity to remove this obstacle. 
\begin{proof}
Our argument proceeds in two steps. The fist step deals with the spatial derivatives of $u$. Therein, we prove the asserted $x$-regularity and show that the formula \eqref{eq:IntegralDifferentiation2} holds. In fact, we only need to consider the case where $|\beta:\mathbf{m}|=2$; the case where $|\beta:\mathbf{m}|<2$ was already treated in the paragraph proceeding the proof. In the second step, we address the time derivative of $u$. As we will see, \eqref{eq:IntegralDifferentiation1} and the asserted $t$-regularity are partial consequences of the results proved in Step 1; this is, in part, due to the fact that the time derivative of $G_p$ can be exchanged for spatial derivatives. The regularity shown in the two steps together will automatically ensure that $u$ is $(2\mathbf{m,v})$-regular on $(t_0,T)\times\mathbb{V}$.\\

\noindent\emph{Step 1. }Let $\beta$ be such that $|\beta:\mathbf{m}|=2$. For $h>0$ write 
\begin{equation*}
u_h(t,x)=\int_{t_0}^{t-h}\int_{\mathbb{V}}G_p(t-s,x-z;z)f(s,z)\,dz\,ds
\end{equation*}
and observe that
\begin{equation*}
D_{\mathbf{v}}^{\beta}u_h(t,x)=\int_{t_0}^{t-h}\int_{\mathbb{V}}D_{\mathbf{v}}^{\beta}G_p(t-s,x-z;z)f(s,z)\,dz\,ds
\end{equation*}
for all $t_0\leq t-h<t\leq T$ and $x\in\mathbb{V}$; it is clear that $D_{\mathbf{v}}^{\beta}u_h(t,x)$ is continuous in $t$ and $x$. The fact that we can differentiate under the integral sign is justified because $t$ has been replaced by $t-h$ and hence the singularity in \eqref{eq:Singularity} is avoided in the upper limit. We will show that $D_{\mathbf{v}}^{\beta}u_h(t,x)$ converges uniformly on all compact subsets of $(t_0,T)\times\mathbb{V}$ as $h\rightarrow 0$. This, of course, guarantees that $D_{\mathbf{v}}^{\beta}u(x,t)$ exists, satisfies \eqref{eq:IntegralDifferentiation2} and is continuous on $(t_0,T)\times\mathbb{V}$. To this end, let us write
\begin{eqnarray*}
D_{\mathbf{v}}^{\beta}u_h(t,x)&=&\int_{t_0}^{t-h}\int_{\mathbb{V}}D_{\mathbf{v}}^{\beta}G_p(t-s,x-z;z)(f(s,z)-f(s,x))\,dz\,ds\\
&&\hspace{3cm}+\int_{t_0}^{t-h}\int_{\mathbb{V}}D_{\mathbf{v}}^{\beta}G_p(t-s,x-z;z)f(s,x)\,dz\,ds\\
&=:&I^{(1)}_h(t,x)+I^{(2)}_h(t,x).
\end{eqnarray*}
Using our hypotheses, Corollary \ref{cor:HolderConsistent} and Lemma \ref{lem:LegendreSubscale}, for some $0<\sigma<1$ and $\theta>0$, there is $M>0$ such that 
\begin{equation*}
|f(s,z)-f(s,x)|\leq C(t-s)^{\sigma}(R^{\#}((t-s)^{-E}(x-z)))^{\theta}
\end{equation*}
for all $x,z\in\mathbb{V}$, $t\in[t_0,T]$ and $s\in[t_0,t]$. In view of the preceding estimate and Lemma \ref{lem:LegendreEstimate}, we have
\begin{eqnarray*}
\lefteqn{\hspace{-1cm}|D_{\mathbf{v}}^{\beta}G_p(t-s,x-z;z)(f(s,z)-f(s,x))|}\\
&\leq& C(t-s)^{-(\mu_H+1)}(t-s)^{\sigma}(R^{\#}((t-s)^{-E}(x-z)))^{\theta}\exp(-MR^{\#}((t-s)^{-E}(x-z)))\\
&\leq& C(t-s)^{-(\mu_H+(1-\sigma))}\exp(-MR^{\#}(t-s)^{-E}(x-z))
\end{eqnarray*}
for all $x,z\in\mathbb{V}$, $t\in[t_0,T]$ and $s\in[t_0,t]$, where $C$ and $M$ are positive constants. We then observe that
\begin{eqnarray*}
\lefteqn{\hspace{-2cm}\int_{t_0}^t\int_{\mathbb{V}}|D_{\mathbf{v}}^{\beta}G_p(t-s,x-z;z)(f(s,z)-f(s,x))|\,dz\,ds}\\
&\leq &C\int_{t_0}^t(t-s)^{-(\mu_H+(1-\sigma))}\int_{\mathbb{V}}\exp(-MR^{\#}((t-s)^{-E}(x-z)))\,dz\,ds\\
&\leq& C\int_{t_0}^t(t-s)^{\sigma-1}\int_{\mathbb{V}}\exp(-MR^{\#}(z))\,dz\,ds\\
&\leq &\frac{C(t-t_0)^{\sigma}}{\sigma}\int_{\mathbb{V}}\exp(-MR^{\#}(z))\,dz\\
&\leq & \frac{C(T-t_0)^{\sigma}}{\sigma}\int_{\mathbb{V}}\exp(-MR^{\#}(z))\,dz<\infty
\end{eqnarray*}
for all $t\in[t_0,T]$ and $x\in \mathbb{V}$, where the validity of the second inequality is seen by making the change of variables $z\mapsto (t-s)^{-E}(x-z)$ and canceling the term $(t-s)^{-\mu_H}=(t-s)^{-\tr E}$.  Consequently,
\begin{equation*}
I^{(1)}(t,x):=\int_{t_0}^t\int_{\mathbb{V}}D_{\mathbf{v}}^{\beta}G_p(t-s,x-z;z)(f(s,z)-f(s,x))\,dz\,ds
\end{equation*}
exists for each $t\in[t_0,T]$ and $x\in\mathbb{V}$. Moreover, for all $t_0\leq t-h<t\leq T$ and $x\in\mathbb{V}$,
\begin{eqnarray*}
|I^{(1)}(t,x)-I_h^{(1)}(t,x)|&\leq&\int_{t-h}^t\int_{\mathbb{V}}|D_{\mathbf{v}}^{\beta}G_p(t-s,x-z;z)(f(s,z)-f(s,x))|\,dz\,ds\\
&\leq& C\int_{t-h}^t\int_{\mathbb{V}}(t-s)^{\sigma-1}\exp(-MR^{\#}(z))\,dz\,ds\leq C h^{\sigma}.
\end{eqnarray*}
From this we see that $\lim_{h\downarrow 0}I_h^{(1)}(t,x)$ converges uniformly on all compact subsets of $(t_0,T)\times\mathbb{V}$.

We claim that for some $0<\rho<1$, there exists $C>0$ such that
\begin{equation}\label{eq:IntegralDifferentiation3}
\Big|\int_{\mathbb{V}}D_{\mathbf{v}}^{\beta}G_p(t-s,x-z;z)\,dz\Big |\leq C(t-s)^{-(1-\rho)}
\end{equation}
for all $x\in\mathbb{V}$ and $s\in[t_0,t]$. Indeed,
\begin{eqnarray*}
\lefteqn{\int_{\mathbb{V}}D_{\mathbf{v}}^{\beta}G_p(t-s,x-z;z)\,dz}\\
&=&
 \int_{\mathbb{V}}D_{\mathbf{v}}^{\beta}[G_p(t-s,x-z;z)-G_p(t-s,x-z;y)]\big\vert_{y=x}\,dz+\big [D_{\mathbf{v}}^{\beta}\int_{\mathbb{V}}G_p(t-s,x-z;y)\,dz\big]\big\vert_{y=x}.
\end{eqnarray*}
The first term above is estimated with the help of Lemma \ref{lem:LegendreHolderEstimate} and by making arguments analogous to those in the previous paragraph; the appearance of $\rho$ follows from an obvious application of Lemma \ref{lem:LegendreSubscale}. This term is bounded by $C(t-s)^{-(1-\rho)}$. The second term is clearly zero and so our claim is justified.

By essentially repeating the arguments made for $I_h^{(1)}$ and making use of \eqref{eq:IntegralDifferentiation3}, we see that
\begin{equation*}
\lim_{h\downarrow 0}I_h^{(2)}(t,x)=I^{(2)}(t,x)=:\int_{t_0}^{t}\int_{\mathbb{V}}D^{\beta}_{\mathbf{v}}G_p(t-s,x-z;z)f(s,x)\,dz\,ds
\end{equation*}
where this limit converges uniformly on all compact subsets of $(t_0,T)\times\mathbb{V}$.\\

\noindent{Step 2. } It follows from Leibnitz' rule that
\begin{eqnarray*}
\partial_tu_h(x,t)&=&\int_{\mathbb{V}}G_p(h,x-z;z)f(t-h,z)\,dz+\int_{t_0}^{t-h}\int_{\mathbb{V}}\partial_tG_p(t-s,x-z;z)f(s,z)\,dz\,ds\\
&=:&J_h^{(1)}(t,x)+J_h^{(2)}(t,x)
\end{eqnarray*}
for all $t_0<t-h<t< T$ and $x\in\mathbb{V}$. Now, in view of Lemma \ref{lem:ApproximateIdentity} and our hypotheses concerning $f$,
\begin{equation*}
\lim_{h\downarrow 0}J_h^{(1)}(t,x)=f(t,x)
\end{equation*}
where this limit converges uniformly on all compact subsets of $(t_0,T)\times\mathbb{V}$.

Using the fact that $\partial_tG_p(t-s,x-z;z)=-H_p(z)G_p(t-s,x-z;z)$, we see that
\begin{eqnarray*}
\lim_{h\downarrow 0}J_h^{(2)}(t,x)&=&\lim_{h\downarrow 0}\int_0^{t-h}\int_{\mathbb{V}}\Big(-\sum_{|\beta:\mathbf{m}|=2}a_{\beta}(z)D_{\mathbf{v}}^{\beta}\Big )G_p(t-s,x-z;z)f(s,z)\,dz\,ds\\
&=&-\sum_{|\beta:\mathbf{m}|=2}\lim_{h\downarrow 0}\int_0^{t-h}\int_{\mathbb{V}}D_{\mathbf{v}}^{\beta}G_p(t-s,x-z;z)(a_{\beta}(z)f(s,z))\,dz\,ds
\end{eqnarray*}
for all $t\in(t_0,T)$ and $x\in\mathbb{V}$. Because the coefficients of $H$ are $\mathbf{v}$-H\"{o}lder continuous and bounded, for each $\beta$, $a_{\beta}(z)f(s,z)$ satisfies the same condition we have required for $f$ and so, in view of Step 1, it follows that $J_h^{(2)}(t,x)$ converges uniformly on all compact subsets of $(t_0,T)\times\mathbb{V}$ as $h\rightarrow 0$. We thus conclude that $\partial_tu(t,x)$ exists, is continuous on $(t_0,T)\times\mathbb{V}$ and satisfies \eqref{eq:IntegralDifferentiation1}. 
\end{proof}

\begin{lemma}\label{lem:WProperties}
Let $W:(0,T]\times\mathbb{V}\times\mathbb{V}\rightarrow \mathbb{C}$ be defined by
\begin{equation*}
W(t,x,y)=\int_0^t\int_{\mathbb{V}}G_p(t-s,x-z;z)\phi(s,z,y)\,dz\,ds,
\end{equation*} 
for $x,y\in\mathbb{V}$ and $0<t\leq T$. Then, for each $y\in\mathbb{V}$,  $W(\cdot,\cdot,y)$ is $(2\mathbf{m,v})$-regular on $(0,T)\times\mathbb{V}$ and satisfies
\begin{equation}\label{eq:WProperties1}
(\partial_t+H)W(t,x,y)=K(t,x,y).
\end{equation}
for all $x,y\in\mathbb{V}$ and $t\in(0,T)$. Moreover, there are positive constants $C$ and $M$ for which
\begin{equation}\label{eq:WProperties2}
|W(t,x,y)|\leq Ct^{-\mu_H+\rho}\exp(-MR^{\#}(t^{-E}(x-y)))
\end{equation}
for all $x,y\in\mathbb{V}$ and $0<t\leq T$ where $\rho$ is that which appears in Lemma \ref{lem:KFiniteTerms}. 
\end{lemma}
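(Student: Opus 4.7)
The plan has two parts: first derive the pointwise bound \eqref{eq:WProperties2} directly from the estimates of Lemma \ref{lem:LegendreEstimate} and Proposition \ref{prop:IntegralIdentity}, and then establish the $(2\mathbf{m},\mathbf{v})$-regularity of $W(\cdot,\cdot,y)$ together with the identity \eqref{eq:WProperties1} by splitting the time integral at a positive height and invoking Lemma \ref{lem:IntegralDifferentiation} on the piece that lies away from $s=0$.

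For the estimate, I will substitute the pointwise bounds
\[ |G_p(t-s,x-z;z)| \leq C(t-s)^{-\mu_H} e^{-M_G R^{\#}((t-s)^{-E}(x-z))}, \qquad |\phi(s,z,y)| \leq C s^{-(\mu_H+1-\rho)} e^{-M_1 R^{\#}(s^{-E}(z-y))} \]
into the integral defining $W$, and then exploit the subadditivity $R^{\#}((t-s)^{-E}(x-z))+R^{\#}(s^{-E}(z-y))\geq R^{\#}(t^{-E}(x-y))$ (the same inequality used at \eqref{eq:KFiniteTerms2}) to extract a factor $\exp(-MR^{\#}(t^{-E}(x-y)))$ with $M=\tfrac{1}{2}\min(M_G,M_1)$, while keeping positive residual exponentials $\exp(-(M_G-M)R^{\#}((t-s)^{-E}(x-z)))$ and $\exp(-(M_1-M)R^{\#}(s^{-E}(z-y)))$ inside the integral. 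Splitting the $s$-integral at $t/2$ then handles the two singularities separately: on $[0,t/2]$ I bound $(t-s)^{-\mu_H}\leq 2^{\mu_H}t^{-\mu_H}$ and integrate the residual exponential centered at $y$ over $z\in\mathbb{V}$ via the change of variables $w=s^{-E}(z-y)$, producing a factor $Cs^{\mu_H}$ that reduces the time integral to $\int_0^{t/2}s^{\rho-1}\,ds\asymp t^{\rho}$; on $[t/2,t]$ I bound $s^{-(\mu_H+1-\rho)}\leq 2^{\mu_H+1-\rho}t^{-(\mu_H+1-\rho)}$ and integrate the other residual exponential in $z$, leaving $\int_{t/2}^{t}ds=t/2$. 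Both halves deliver $Ct^{-\mu_H+\rho}\exp(-MR^{\#}(t^{-E}(x-y)))$, which is \eqref{eq:WProperties2}.

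For regularity and the PDE identity, fix $y\in\mathbb{V}$, $t^{*}\in(0,T)$, and $\epsilon\in(0,t^{*}/2)$, and decompose $W(t,x,y)=W_1(t,x,y)+W_2(t,x,y)$ with $W_1$ the integral over $[0,\epsilon]$ and $W_2$ the integral over $[\epsilon,t]$, for $t$ in a neighborhood of $t^{*}$. In $W_1$ one has $t-s\geq t-\epsilon>0$ throughout, so Lemma \ref{lem:LegendreEstimate} provides integrable dominating bounds on $G_p(t-s,x-z;z)$ and all of its needed $(t,x)$-derivatives, allowing direct differentiation under the integral; using $\partial_t G_p(t-s,x-z;z)=-H_p(z)G_p(t-s,x-z;z)$ together with the definition of $K$ yields
\[ (\partial_t+H)W_1(t,x,y) = -\int_0^{\epsilon}\int_{\mathbb{V}} K(t-s,x,z)\phi(s,z,y)\,dz\,ds. \]
For $W_2$, I apply Lemma \ref{lem:IntegralDifferentiation} with $t_0=\epsilon$ and $f(s,z)=\phi(s,z,y)$. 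Its hypotheses are supplied by Proposition \ref{prop:IntegralIdentity} (boundedness and continuity of $\phi$ on $[\epsilon,T]\times\mathbb{V}$) and Lemma \ref{holderphilemma} (uniform $\mathbf{v}$-H\"{o}lder continuity in the spatial variable with an exponent $\alpha\in\mathbb{I}_+^d$ consistent with $\mathbf{m}$, the H\"{o}lder constant being bounded in $s\in[\epsilon,T]$). That lemma gives the $(2\mathbf{m},\mathbf{v})$-regularity of $W_2(\cdot,\cdot,y)$ on a neighborhood of $(t^{*},x)$ and
\[ (\partial_t+H)W_2(t,x,y) = \phi(t,x,y) - \lim_{h\downarrow 0}\int_{\epsilon}^{t-h}\int_{\mathbb{V}} K(t-s,x,z)\phi(s,z,y)\,dz\,ds. \]
Adding, the artificial cutoff $\epsilon$ disappears and the two time-integrals reassemble into the improper integral $\int_0^{t}\int_{\mathbb{V}} K(t-s,x,z)\phi(s,z,y)\,dz\,ds$, whose absolute convergence is implicit in the bound for $\phi$ combined with the bound for $K$ (the same calculation used for \eqref{eq:WProperties2} with $K$ in place of $G_p$). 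The integral identity \eqref{eq:IntegralIdentity} of Proposition \ref{prop:IntegralIdentity} then collapses $\phi(t,x,y)-\int_0^{t}\!\int_{\mathbb{V}}K\phi$ to $K(t,x,y)$, giving \eqref{eq:WProperties1}.

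The main obstacle is bookkeeping rather than ideas: checking that the H\"{o}lder exponent supplied by Lemma \ref{holderphilemma} really is consistent with $\mathbf{m}$ in the sense of Definition \ref{def:Consistent} (this is asserted directly by that lemma), and producing integrable dominating functions in the parameter $(t,x)$ that legitimize the direct differentiation under the integral sign on the bounded-away piece $W_1$. Both amount to careful invocations of Lemma \ref{lem:LegendreEstimate} and Proposition \ref{prop:IntegralIdentity}, after which the assembly of the two pieces of $W$ and the cancellation against \eqref{eq:IntegralIdentity} is routine.
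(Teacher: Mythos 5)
Your proposal is correct and follows essentially the same route as the paper: the estimate is obtained by inserting the bounds on $G_p$ and $\phi$, exploiting the subadditivity \eqref{eq:KFiniteTerms2}, and splitting the time integral at $t/2$; the regularity and the identity \eqref{eq:WProperties1} are obtained by splitting $W$ at a fixed intermediate time, differentiating directly under the integral on the piece away from $s=t$, applying Lemma \ref{lem:IntegralDifferentiation} (with hypotheses supplied by Proposition \ref{prop:IntegralIdentity} and Lemma \ref{holderphilemma}) on the piece away from $s=0$, and finally invoking the integral identity \eqref{eq:IntegralIdentity} to collapse $\phi-\int K\phi$ to $K$. The only cosmetic difference is that your labels $W_1,W_2$ are swapped relative to the paper's.
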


\begin{proof}
The estimate \eqref{eq:WProperties2} follows from \eqref{eq:LegendreEstimate1} and \eqref{eq:PhiEstimate} by an analogous computation to that done in the proof of Lemma \ref{lem:KFiniteTerms}. It remains to show that, for each $y\in\mathbb{V}$, $W(\cdot,\cdot,y)$ is $(2\mathbf{m,v})$-regular and satisfies \eqref{eq:WProperties1} on $(0,T)\times\mathbb{V}$. These are both local properties and, as such, it suffices to examine them on $(t_0,T)\times\mathbb{V}$ for an arbitrary but fixed $t_0>0$. Let us write
\begin{eqnarray*}
W(t,x,y)&=&\int_{t_0}^t\int_{\mathbb{V}}G_p(t-s,x-z;z)\phi(s,z,y)\,dz\,ds+\int_0^{t_0}\int_{\mathbb{V}}G_p(t-s,x-z;z)\phi(s,z,y)\,dz\,ds\\
&=:&W_1(t,x,y)+W_2(t,x,y)
\end{eqnarray*}
for $x,y\in\mathbb{V}$ and $t_0<t<T$. In view of Lemmas \ref{holderphilemma} and \ref{lem:IntegralDifferentiation}, for each $y\in\mathbb{V}$, $W_1(\cdot,\cdot,y)$ is $(2\mathbf{m,v})$-regular on $(t_0,T)\times\mathbb{V}$ and
\begin{eqnarray}\label{eq:WProperties3}\nonumber
(\partial_t+H)W_1(t,x,y)&=&\partial_t W_1(t,x,y)+\sum_{|\beta:\mathbf{m}|\leq 2}a_{\beta}(x)D_{\mathbf{v}}^{\beta}W_1(t,x,y)\\\nonumber
&=&\phi(t,x,y)+\lim_{h\downarrow 0}\int_{t_0}^{t-h}\int_{\mathbb{V}}\partial_tG_p(t-s,x-z;z)\phi(s,z,y)\,dz\,dy\\\nonumber
&&\hspace{1cm}+\lim_{h\downarrow 0}\int_{t_0}^{t-h}\int_{\mathbb{V}}\sum_{|\beta:\mathbf{m}|\leq 2}a_{\beta}(x)D_{\mathbf{v}}^{\beta}G_p(t-s,x-z;z)\phi(s,z,y)\,dz\,ds\\\nonumber
&=&\phi(t,x,y)+\lim_{h\downarrow 0}\int_{t_0}^{t-h}\int_{\mathbb{V}}(\partial_t+H)G_p(t-s,x-z;z)\phi(s,z,y)\,dz\,ds\\
&=&\phi(t,x,y)-\lim_{h\downarrow 0}\int_{t_0}^{t-h}\int_{\mathbb{V}}K(t-s,x,z)\phi(s,z,y)\,dz\,ds
\end{eqnarray}
for all $x\in\mathbb{V}$ and $t_0<t<T$; here we have used the fact that 
\begin{equation*}
(\partial_t+H)G_p(t-s,x-z;z)=-K(t-s,x,z). 
\end{equation*}
Treating $W_2$ is easier because $\partial_t G_p(t-s,x-z;z)$ and, for each multi-index $\beta$, $D_{\mathbf{v}}^{\beta}G_p(t-s,x-z;z)$ are, as functions of $s$ and $z$, absolutely integrable on $(0,t_0]\times \mathbb{V}$ for every $t\in (t_0,T]$ and $x\in\mathbb{V}$ by virtue of Lemma \ref{lem:LegendreEstimate}. Consequently, derivatives may be taken under the integral sign and so it follows that, for each $y\in\mathbb{V}$, $W_2(\cdot,\cdot,y)$ is $(2\mathbf{m,v})$-regular on $(t_0,T)\times\mathbb{V}$ and
\begin{equation}\label{eq:WProperties4}
(\partial_t+H)W_2(t,x,y)=-\int_0^{t_0}\int_{\mathbb{V}}K(t-s,x,z)\phi(s,z,y)\,dz\,ds
\end{equation}
for $x\in\mathbb{V}$ and $t_0<t< T$. We can thus conclude that, for each $y\in\mathbb{V}$, $W(\cdot,\cdot,y)$ is $(2\mathbf{m,v})$-regular on $(t_0,T)\times\mathbb{V}$ and, by combining \eqref{eq:WProperties3} and \eqref{eq:WProperties4},
\begin{equation*}
(\partial_t+H)W(t,x,y)=\phi(t,x,y)-\lim_{h\downarrow 0}\int_0^{t-h}\int_{\mathbb{V}}K(t-s,x,z)\phi(s,z,y)\,dz\,ds
\end{equation*}
for $x\in\mathbb{V}$ and $t_0<t< T$. By \eqref{eq:K1Bound}, Proposition \ref{prop:IntegralIdentity} and the Dominated Convergence Theorem,
\begin{eqnarray*}
\lim_{h\downarrow 0}\int_0^{t-h}\int_{\mathbb{V}}K(t-s,x,z)\phi(s,z,y)\,dz\,ds&=&\int_0^{t}\int_{\mathbb{V}}K(t-s,x,z)\phi(s,z,y)\,dz\,ds\\
&=&\phi(t,x,y)-K(t,x,y)
\end{eqnarray*}
and therefore
\begin{equation*}
(\partial_t+H)W(t,x,y)=K(t,x,y)
\end{equation*}
for all $x,y\in\mathbb{V}$ and $t_0<t<T$.
\end{proof}

\noindent The theorem below is our main result. It is a more refined version of Theorem \ref{thm:FundamentalSolution} because it gives an explicit formula for the fundamental solution $Z$; in particular Theorem \ref{thm:FundamentalSolution} is an immediate consequence of the result below.
\begin{theorem}
Let $H$ be a uniformly $(2\mathbf{m,v})$-positive-semi-elliptic operator. If $H$ satisfies Hypothesis \ref{hypoth:HolderCoefficients} then $Z:(0,T]\times\mathbb{V}\times\mathbb{V}\rightarrow \mathbb{C}$, defined by
\begin{equation}\label{eq:Main1}
Z(t,x,y)=G_p(t,x-y;y)+W(t,x,y)
\end{equation}
for $x,y\in\mathbb{V}$ and $0<t\leq T$, is a fundamental solution to \eqref{eq:HeatEquation}. Moreover, there are positive constants $C$ and $M$ for which
\begin{equation}\label{eq:Main2}
|Z(t,x,y)|\leq \frac{C}{t^{\mu_H}}\exp\left(-tM R^{\#}\left(\frac{x-y}{t}\right)\right)
\end{equation}
for all $x,y\in\mathbb{V}$ and $0<t\leq T$. 
\end{theorem}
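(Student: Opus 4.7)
The plan is to verify the two defining properties of a fundamental solution for $Z$, and then assemble the pointwise bound \eqref{eq:Main2} from the estimates already proved in Lemmas \ref{lem:LegendreEstimate} and \ref{lem:WProperties}. Almost all of the analytic difficulty has been absorbed into the construction of $G_p$, of $\phi$, and of $W$; what remains is organizational.

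For the differential equation, I would argue as follows. Fix $y \in \mathbb{V}$. By Corollary \ref{frozenfundsolcor}, $G_p(\cdot,\cdot-y;y)$ is smooth on $(0,T)\times\mathbb{V}$ and satisfies $(\partial_t + H_p(y))G_p(t,x-y;y) = 0$; therefore
\begin{equation*}
(\partial_t + H)G_p(t,x-y;y) = (H - H_p(y))G_p(t,x-y;y) = -K(t,x,y).
\end{equation*}
By Lemma \ref{lem:WProperties}, $W(\cdot,\cdot,y)$ is $(2\mathbf{m,v})$-regular on $(0,T)\times\mathbb{V}$ and $(\partial_t + H)W(t,x,y) = K(t,x,y)$. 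Adding the two identities yields $(\partial_t + H)Z(t,x,y) = 0$, and the $(2\mathbf{m,v})$-regularity of $Z(\cdot,\cdot,y)$ follows from that of its two summands (Lemma \ref{lem:LegendreEstimate} gives the required smoothness of $G_p$).

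For the approximate identity property, let $f \in C_b(\mathbb{V})$ and split
\begin{equation*}
\int_{\mathbb{V}} Z(t,x,y)f(y)\,dy = \int_{\mathbb{V}} G_p(t,x-y;y)f(y)\,dy + \int_{\mathbb{V}} W(t,x,y)f(y)\,dy.
\end{equation*}
By Lemma \ref{lem:ApproximateIdentity} the first term tends to $f(x)$ uniformly on compact subsets of $\mathbb{V}$ as $t \downarrow 0$. For the second term I would apply the estimate \eqref{eq:WProperties2}, change variables $z = t^{-E}(x-y)$ to cancel the factor $t^{-\mu_H}$ via the Jacobian $t^{\mu_H} = t^{\tr E}$, and use Corollary \ref{cor:ExponentialofLegendreTransformIntegrable} to obtain
\begin{equation*}
\left|\int_{\mathbb{V}} W(t,x,y)f(y)\,dy\right| \leq C\|f\|_{\infty}\, t^{\rho} \int_{\mathbb{V}} \exp(-MR^{\#}(z))\,dz,
\end{equation*}
which tends to $0$ as $t \downarrow 0$ because $\rho > 0$. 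This establishes that $Z$ is an approximate identity in the required sense.

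Finally, for the off-diagonal estimate \eqref{eq:Main2}, apply Lemma \ref{lem:LegendreEstimate} with $k = 0$ and $\beta = 0$ to obtain, for some positive constants $C$ and $M$,
\begin{equation*}
|G_p(t,x-y;y)| \leq \frac{C}{t^{\mu_H}}\exp\!\left(-tMR^{\#}\!\left(\frac{x-y}{t}\right)\right),
\end{equation*}
using the identity $tR^{\#}(x/t) = R^{\#}(t^{-E}x)$ which follows from $I - E \in \Exp(R^{\#})$ (Proposition \ref{prop:LegendreTransformProperties}). From \eqref{eq:WProperties2} together with $t \leq T$ and $\rho > 0$, one has $t^{-\mu_H+\rho} \leq T^{\rho} t^{-\mu_H}$, so $|W(t,x,y)|$ is bounded by an expression of the same form, possibly with a different exponential constant $M'$. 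Replacing both constants by their minimum and invoking Corollary \ref{cor:MovingConstants} to reconcile the multiplicative constant inside the transform, I would add the two bounds to conclude \eqref{eq:Main2}. The only step that required genuine new effort was Lemma \ref{lem:WProperties} (justifying differentiation under the integral through Levi's singularity), and that has already been dispatched; the present theorem is essentially a clean bookkeeping consequence of the preceding three steps.
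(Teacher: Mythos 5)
Your proof is correct and follows essentially the same route as the paper: deduce $(\partial_t+H)Z=0$ by combining Corollary \ref{frozenfundsolcor} with Lemma \ref{lem:WProperties}, obtain the approximate identity from Lemma \ref{lem:ApproximateIdentity} together with the $O(t^{\rho})$ bound from \eqref{eq:WProperties2}, and assemble \eqref{eq:Main2} from Lemma \ref{lem:LegendreEstimate} and \eqref{eq:WProperties2}. The one small overcomplication is the appeal to Corollary \ref{cor:MovingConstants} at the end: since both bounds are already of the form $C e^{-M R^{\#}(t^{-E}(x-y))}$ with $R^{\#}\geq 0$, taking the smaller of the two $M$'s and the larger $C$ suffices without touching the transform.
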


\begin{proof}
As $0<\rho<1$, \eqref{eq:WProperties2} and Lemma \ref{lem:LegendreEstimate} imply the estimate \eqref{eq:Main2}. In view of Lemma \ref{lem:WProperties} and Corollary \ref{frozenfundsolcor}, for each $y\in\mathbb{V}$, $Z(\cdot,\cdot,y)$ is $(2\mathbf{m,v})$-regular on $(0,T)\times\mathbb{V}$ and
\begin{eqnarray*}
(\partial_t+H)Z(t,x,y)&=&(\partial_t+H)G_p(t,x-y,y)+(\partial_t+H)W(t,x,y)\\
&=&-K(t,x,y)+K(t,x,y)=0
\end{eqnarray*}
for all $x\in\mathbb{V}$ and $0<t< T$. It remains to show that for any $f\in C_b(\mathbb{V})$,
\begin{equation*}
\lim_{t\rightarrow 0}\int_{\mathbb{V}}Z(t,x,y)f(y)\,dy=f(x)
\end{equation*}
for all $x\in\mathbb{V}$. Indeed, let $f\in C_b(\mathbb{V})$ and, in view of \eqref{eq:WProperties2}, observe that
\begin{eqnarray*}
\left|\int_{\mathbb{V}}W(t,x,y)f(y)\right|&\leq& Ct^{\rho}\|f\|_{\infty}\int_{\mathbb{V}}t^{-\mu_H}\exp(-MR^{\#}(t^{-E}(x-y)))dy\\
&\leq& Ct^{\rho}\|f\|_{\infty}\int_{\mathbb{V}}\exp(-MR^{\#}(y))\,dy\leq Ct^{\rho}\|f\|_{\infty}
\end{eqnarray*}
for all $x\in\mathbb{V}$ and $0<t\leq T$. An appeal to Lemma \ref{lem:ApproximateIdentity} gives, for each $x\in\mathbb{V}$,
\begin{eqnarray*}
\lim_{t\rightarrow 0}\int_{\mathbb{V}}Z(t,x,y)f(y)dy&=&\lim_{t\rightarrow 0}\int_{\mathbb{V}}G_p(t,x-y;y)f(y)\,dy+\lim_{t\rightarrow 0}\int_{\mathbb{V}}W(t,x,y)f(y)\,dy\\
&=&f(x)+0=f(x)
\end{eqnarray*}
as required. In fact, the above argument guarantees that this convergence happens uniformly on all compact subsets of $\mathbb{V}$.
\end{proof}

\noindent We remind the reader that implicit in the definition of fundamental solution to \eqref{eq:HeatEquation} is the condition that $Z$ is $(2\mathbf{m,v})$-regular. In fact, one can further deduce estimates for the spatial derivatives of $Z$, $D_{\mathbf{v}}^{\beta}Z$, of the form \eqref{eq:CCDerivativeEstimate} for all $\beta$ such that $|\beta:2\mathbf{m}|\leq 1$ (see \cite[p. 92]{Eidelman2004}). Using the fact that $Z$ satisfies \eqref{eq:HeatEquation} and $H$'s coefficients are bounded, an analogous estimate is obtained for a single $t$ derivative of $Z$.

\noindent Evan Randles\footnote{This material is based upon work supported by the National Science Foundation Graduate Research Fellowship under Grant No. DGE-1144153} : Department of Mathematics, University of California, Los Angeles, Los Angeles CA 90025.
\newline E-mail: randles@math.ucla.edu\\

\noindent Laurent Saloff-Coste\footnote{This material is based upon work supported by the National Science Foundation under Grant No. DMS-1404435}: Department of Mathematics, Cornell University, Ithaca NY 14853.
\newline E-mail: lsc@math.cornell.edu

\end{document}